\newcommand\BibTeX{{\rmfamily B\kern-.05em \textsc{i\kern-.025em b}\kern-.08em
T\kern-.1667em\lower.7ex\hbox{E}\kern-.125emX}}
\newcommand{\N}{\mathbb{N}}
\newcommand{\R}{\mathbb{R}}
\newcommand{\QQ}{\mathbb{Q}}
\newcommand{\PP}{\mathbb{P}}
\newcommand{\EE}{\mathbb{E}}
\newcommand{\VV}{\mathbb{V}\mathrm{ar}}
\newcommand{\bb}[1]{\boldsymbol{#1}}
\newcommand{\OO}{\mathcal O}
\newcommand{\oo}{\mathrm{o}}
\newcommand{\leqdef}{\vcentcolon=}
\newcommand{\rd}{{\rm d}}
\newcommand{\ind}{\mathds{1}}
\newcommand{\e}{\varepsilon}
\newcommand{\XX}{X}
\begin{document}

\title{A multivariate normal approximation for the Dirichlet density and some applications}

\author[1,2]{Fr\'ed\'eric Ouimet*}
\address[1]{\orgdiv{Division of Physics, Mathematics and Astronomy}, \orgname{California Institute of Technology}, \orgaddress{\state{California}, \country{USA}}}
\address[2]{\orgdiv{Department of Mathematics and Statistics}, \orgname{McGill University}, \orgaddress{\state{Quebec}, \country{Canada}}}

\corres{*Fr\'ed\'eric Ouimet\\ \email{frederic.ouimet2@mcgill.ca}}

\presentaddress{\orgdiv{Department of Mathematics and Statistics}, \orgname{McGill University}, \orgaddress{\state{Quebec}, \country{Canada}}}

\abstract[Abstract]{In this short note, we prove an asymptotic expansion for the ratio of the Dirichlet density to the multivariate normal density with the same mean and covariance matrix. The expansion is then used to derive an upper bound on the total variation between the corresponding probability measures and rederive the asymptotic variance of the Dirichlet kernel estimators introduced by \cite{doi:10.2307/2347365} and studied theoretically in \cite{arXiv:2002.06956}. Another potential application related to the asymptotic equivalence between the Gaussian variance regression problem and the Gaussian white noise problem is briefly mentioned but left open for future research.}

\keywords{Dirichlet distribution; asymptotic statistics; expansion; normal approximation; Gaussian approximation; multivariate normal; total variation; asymptotic variance; nonparametric statistics; smoothing; density estimation}

\jnlcitation{\cname{%
\author{F. Ouimet}} (\cyear{2022}),
\ctitle{A multivariate normal approximation for the Dirichlet density and some applications}, \cjournal{Stat}, \cvol{11 (1), e410}.}

\maketitle


\vspace{-6mm}
\section{Introduction}\label{sec:intro}

For any $d\in \N$ and $\bb{v}\in \R^d$, let $\|\bb{v}\|_1 \leqdef \sum_{i=1}^d |v_i|$ denote the $\ell^1$ norm and define the $d$-dimensional simplex as
\begin{align}\label{eq:def:simplex}
    \mathcal{S}_d \leqdef \big\{\bb{v}\in [0,1]^d : \|\bb{v}\|_1 \leq 1\big\}.
\end{align}
Given the parameters $N\in \N$ and $(\bb{\alpha},\beta)\in (0,\infty)^{d+1}$, the $\mathrm{Dirichlet}\hspace{0.2mm}(N \bb{\alpha}, N \beta)$ density function is defined by
\vspace{-1mm}
\begin{align}\label{eq:Dirichlet.pdf}
    K_{N\hspace{-0.2mm},\bb{\alpha}\hspace{-0.2mm},\beta}(\bb{x}) = \frac{\Gamma(N \|\bb{\alpha}\|_1 + N \beta)}{\Gamma(N \beta) \prod_{i=1}^d \Gamma(N \alpha_i)} \cdot (1 - \|\bb{x}\|_1)^{N \beta - 1} \prod_{i=1}^d x_i^{N \alpha_i - 1}, \quad \bb{x}\in \mathcal{S}_d.
\end{align}
The covariance matrix of the Dirichlet distribution is well-known to be $(N \|\bb{\alpha}\|_1 + N \beta + 1)^{-1} \, \Sigma_{\bb{r}}$, where
\begin{align}\label{eq:Sigma}
    \Sigma_{\bb{r}} \leqdef \text{diag}(\bb{r}) - \bb{r} \bb{r}^{\top} \quad \text{and} \quad \bb{r} \leqdef \frac{\bb{\alpha}}{\|\bb{\alpha}\|_1 + \beta},
\end{align}
see, e.g., \cite[p.39]{MR2830563}.
By adapting Theorem~1 and Equation~(21) in \cite{MR1157720}, we also know that
\begin{align}
    \det(\Sigma_{\bb{r}}) = \prod_{i=1}^{d+1} r_i \quad \text{and} \quad (\Sigma_{\bb{r}}^{-1})_{ij} = \frac{1}{r_i} \ind_{\{i = j\}} + \frac{1}{r_{d+1}}, ~~ i,j\in \{1,2,\dots,d\},
\end{align}
where $r_{d+1} \leqdef 1 - \|\bb{r}\|_1 = \frac{\beta}{\|\bb{\alpha}\|_1 + \beta}$.

The first goal of the paper (Theorem~\ref{thm:p.k.expansion}) is to establish an asymptotic expansion for the ratio of the Dirichlet density \eqref{eq:Dirichlet.pdf} to the multivariate normal density with the same mean and covariances, namely:
\begin{align}\label{eq:phi.M}
    (N \|\bb{\alpha}\|_1 + N \beta + 1)^{d/2} \phi_{\Sigma_{\bb{r}}}(\bb{\delta}_{\bb{x}}), \quad \bb{x}\in \R^d, \qquad \text{where } \phi_{\Sigma_{\bb{r}}}(\bb{y}) \leqdef \frac{\exp(-\frac{1}{2} \bb{y}^{\top} \Sigma_{\bb{r}}^{-1} \bb{y})}{\sqrt{(2\pi)^d \det(\Sigma_{\bb{r}})}},
\end{align}

\vspace{-2mm}
\noindent
and where
\vspace{-1mm}
\begin{align}
    \bb{\delta}_{\bb{x}} \leqdef (\delta_{1,x_1},\delta_{2,x_2},\dots,\delta_{d,x_d}) \qquad \text{and} \qquad \delta_{i,x_i} \leqdef \frac{x_i - r_i}{(N \|\bb{\alpha}\|_1 + N \beta + 1)^{-1/2}}.
\end{align}

\newpage
The second goal of the paper is to apply the asymptotic expansion to derive an upper bound on the total variation between the probability measures on $\R^d$ induced by \eqref{eq:Dirichlet.pdf} and \eqref{eq:phi.M}, and to rederive the asymptotic variance of the Dirichlet kernel estimators in the context of density estimation for compositional data.
These two applications are treated in Section~\ref{sec:total.variation.bound} and Section~\ref{sec:Dirichlet.asymmetric.kernel}, respectively.
There could be many other potential applications, see for example the excellent survey by \cite{MR3007210} on quantile coupling inequalities.

In fact, the original motivation for the present paper was the PhD thesis of Huibin Zhou \citep{Zhou2004phd}, in which a multi-resolution coupling methodology between beta and normal random variables is applied to prove the asymptotic equivalence between the Gaussian variance regression problem and the Gaussian white noise problem under Besov smoothness constraints (see also the related works of \cite{MR1425958}, \cite{MR1611772}, \cite{MR1633574}, \cite{MR1633574}, \cite{MR1922538}, \cite{MR2125610}, \cite{MR2202326}, \cite{MR2351100}, \cite{MR2435461}, \cite{MR2549558}, \cite{MR2589320} and \cite{MR3010397}).
In \cite{Zhou2004phd}, the main idea was that the information we get from the sampled observations $X_i\sim \mathrm{Normal}(0,f(t_i))$, where the $t_i$'s form a fixed partition of $[0,1]$ and $f$ is an unknown density function, can be encoded using the (Gaussian) increments of a properly scaled Brownian motion with drift $t\mapsto \frac{1}{\sqrt{2}} \int_0^t \log f(s) \rd s$, and vice versa. Ultimately, the crucial step in the proof involves multiscale inductive quantile couplings (comparisons) between conditionally scaled chi-squared random variables (i.e., beta random variables) and Gaussian analogues, akin to the multiscale argument in \cite{MR1922539} used to prove the asymptotic equivalence, in \cite{MR2102503}, between the density estimation problem and a similar Gaussian white noise problem, and akin to the dyadic scheme used in the proof of the KMT approximation by many authors (see, e.g., \cite{MR375412,MR402883}, \cite{MR893903}, \cite{MR972783}, \cite{MR996984}, \cite{MR1616527}, \cite{Major_2000_tech_report}, \cite{Dudley_2005_KMT}).
We believe that the main result here (Theorem~\ref{thm:p.k.expansion}) could lead to a significant simplification of the proof of \cite[Theorem~2.1]{Zhou2004phd}, in analogy with the removal of the inductive part of the proof for the Le Cam distance upper bound between multinomial and multivariate normal experiments from \cite[Theorem~1]{MR1922539}, shown in \cite{MR4249129}. This point is left open for future research.

The general reason that we are interested in developing normal approximations for the Dirichlet density, other than for the two applications given in Section~\ref{sec:applications} and the potential simplification of the proof of the asymptotic equivalence mentioned above, is because the (multivariate) normal distribution is at the heart of the asymptotic theory for many statistical methods.
Any problem that would involve the Dirichlet density and/or its moments (assuming large parameters $\bb{\alpha}$ and $\beta$) can be ``transferred'', using Theorem~\ref{thm:p.k.expansion}, to a problem involving the corresponding Gaussian density and/or its moments, which is often easier to deal with.
A typical example of this are quantile coupling inequalities (which are ubiquitous in asymptotic theory), where cumulative distribution functions (integrated densities in the continuous setting) need to be compared. Another example could be the derivation optimal Berry-Esseen type bounds, see, e.g., \cite{MR2743033}, \cite{MR3201658} and \cite{MR3717995}, and references therein.
For a general treatment of normal approximations and further motivation on this subject, we refer the reader to \cite{MR0436272}, \cite{MR1295242} and \cite{MR2732624}.

\begin{remark}
    Throughout the paper, the notation $u = \OO(v)$ means that $\limsup_{N\to \infty} |u / v| < C$, where $C > 0$ is a universal constant.
    Whenever $C$ might depend on some parameters, we add subscripts (for example, $u = \OO_{\bb{\alpha}\hspace{-0.2mm},\beta}(v)$).
    Also, we write
    \begin{align}\label{eq:def:epsilon.N}
        x_{d+1} \leqdef 1 - \|\bb{x}\|_1, \qquad \alpha_{d+1} \leqdef \beta, \qquad \text{and} \qquad \e_N \leqdef  \frac{1}{N (\|\bb{\alpha}\|_1 + \beta)}.
    \end{align}
    In particular, the definition of $x_{d+1}$ and $r_{d+1}$ implies that $\delta_{d+1,x_{d+1}} = - \sum_{i=1}^d \delta_{i,x_i}$.
\end{remark}

\vspace{-8mm}
\section{Main result}\label{sec:main.result}

    First, we prove an asymptotic expansion for the ratio of the Dirichlet density to the multivariate normal density with the same mean and covariances.

    \begin{theorem}\label{thm:p.k.expansion}
        Pick any $\eta\in (0,1)$, and let
        \begin{align}\label{eq:thm:p.k.expansion.condition}
            B_{\eta} \leqdef \big\{\bb{x}\in \mathcal{S}_d : |\delta_{i,x_i}| \leq \eta N^{1/6}, ~\text{for all } i\in \{1,2,\dots,d+1\}\big\}
        \end{align}
        denote the bulk of the Dirichlet distribution.
        Then, uniformly for $\bb{x}\in B_{\eta}$, we have, as $N\to \infty$,
        \begin{equation}\label{eq:LLT.order.2}
            \begin{aligned}
                \log\left(\frac{K_{N\hspace{-0.2mm},\bb{\alpha}\hspace{-0.2mm},\beta}(\bb{x})}{(1 + \e_N^{-1})^{d/2} \phi_{\Sigma_{\bb{r}}}(\bb{\delta}_{\bb{x}})}\right)
                &= \e_N^{1/2} \cdot \left\{- \sum_{i = 1}^{d+1} \bigg(\frac{\delta_{i,x_i}}{r_i}\bigg) + \frac{1}{3} \sum_{i = 1}^{d+1} \delta_{i,x_i} \bigg(\frac{\delta_{i,x_i}}{r_i}\bigg)^2\right\} \\
                &\quad+ \e_N \cdot \left\{\frac{1}{2} \sum_{i=1}^{d+1} (1 + r_i) \bigg(\frac{\delta_{i,x_i}}{r_i}\bigg)^2 - \frac{1}{4} \sum_{i=1}^{d+1} \delta_{i,x_i} \bigg(\frac{\delta_{i,x_i}}{r_i}\bigg)^3 - \frac{d}{2} + \frac{1}{12} \Big\{1 - \sum_{i=1}^{d+1} r_i^{-1}\Big\}\right\} + \OO_{\bb{\alpha}\hspace{-0.2mm},\beta\hspace{-0.2mm},\eta}\Bigg(\frac{(1 + \|\bb{\delta}_{\bb{x}}\|_1)^5}{N^{3/2}}\Bigg).
            \end{aligned}
        \end{equation}
        Some numerical evidence for the validity of this theorem is shown in Appendix~\ref{sec:simulations}.
    \end{theorem}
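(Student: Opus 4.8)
The plan is to take logarithms and expand term by term. Writing $\alpha_{d+1}\leqdef\beta$ and $x_{d+1}\leqdef 1-\|\bb{x}\|_1$ as in the remark, the Dirichlet density \eqref{eq:Dirichlet.pdf} is $K_{N\hspace{-0.2mm},\bb{\alpha}\hspace{-0.2mm},\beta}(\bb{x}) = \Gamma(\e_N^{-1})\big(\prod_{i=1}^{d+1}\Gamma(N\alpha_i)\big)^{-1}\prod_{i=1}^{d+1}x_i^{N\alpha_i-1}$, using $N\|\bb{\alpha}\|_1+N\beta = \e_N^{-1}$ and $N\alpha_i = r_i\e_N^{-1}$. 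On the Gaussian side I would use $\det(\Sigma_{\bb{r}}) = \prod_{i=1}^{d+1}r_i$ together with the inverse-covariance formula and $\delta_{d+1,x_{d+1}} = -\sum_{i=1}^d\delta_{i,x_i}$ to collapse the quadratic form to $\bb{\delta}_{\bb{x}}^{\top}\Sigma_{\bb{r}}^{-1}\bb{\delta}_{\bb{x}} = \sum_{i=1}^{d+1}\delta_{i,x_i}^2/r_i$. Since $(1+\e_N^{-1})^{-1/2} = \sqrt{\e_N/(1+\e_N)}$, one may write $x_i = r_i(1+u_i)$ with $u_i\leqdef(\delta_{i,x_i}/r_i)\sqrt{\e_N/(1+\e_N)}$, so that the left-hand side of \eqref{eq:LLT.order.2} becomes
\begin{align*}
&\bigg[\log\Gamma(\e_N^{-1}) - \sum_{i=1}^{d+1}\log\Gamma(r_i\e_N^{-1})\bigg] + \sum_{i=1}^{d+1}(r_i\e_N^{-1}-1)\big(\log r_i + \log(1+u_i)\big) \\
&\qquad - \frac{d}{2}\log(1+\e_N^{-1}) + \frac{d}{2}\log(2\pi) + \frac{1}{2}\sum_{i=1}^{d+1}\log r_i + \frac{1}{2}\sum_{i=1}^{d+1}\frac{\delta_{i,x_i}^2}{r_i}.
\end{align*}

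Next I would apply Stirling's series $\log\Gamma(z) = (z-\tfrac12)\log z - z + \tfrac12\log(2\pi) + \tfrac{1}{12z} + \OO(z^{-3})$ to the $d+2$ Gamma factors. Using $\log(r_i\e_N^{-1}) = \log r_i + \log(\e_N^{-1})$ and $\sum_{i=1}^{d+1}r_i = 1$, the $\e_N^{-1}$-order terms cancel and the bracketed difference equals $\tfrac{d}{2}\log(\e_N^{-1}) - \tfrac{d}{2}\log(2\pi) - \e_N^{-1}\sum_i r_i\log r_i + \tfrac12\sum_i\log r_i + \tfrac{\e_N}{12}\big(1 - \sum_i r_i^{-1}\big) + \OO(\e_N^3)$. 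The $\e_N^{-1}\sum_i r_i\log r_i$ and $\sum_i\log r_i$ pieces are then exactly cancelled by $\sum_i(r_i\e_N^{-1}-1)\log r_i = \e_N^{-1}\sum_i r_i\log r_i - \sum_i\log r_i$ and by $\tfrac12\sum_i\log r_i$ from the Gaussian side; the $\log(2\pi)$ terms cancel likewise; and $\tfrac{d}{2}\log(\e_N^{-1}) - \tfrac{d}{2}\log(1+\e_N^{-1}) = -\tfrac{d}{2}\log(1+\e_N) = -\tfrac{d}{2}\e_N + \OO(\e_N^2)$ supplies the $-d/2$ coefficient of $\e_N$ in \eqref{eq:LLT.order.2}.

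It then remains to expand $\sum_i(r_i\e_N^{-1}-1)\log(1+u_i)$ via $\log(1+u_i) = u_i - \tfrac{u_i^2}{2} + \tfrac{u_i^3}{3} - \tfrac{u_i^4}{4} + \OO(u_i^5)$, also expanding the $(1+\e_N)^{-k/2}$ factors hidden inside $u_i^k$. The key point is that the linear piece, after multiplication by $r_i\e_N^{-1}$, yields $\e_N^{-1/2}(1+\e_N)^{-1/2}\sum_i\delta_{i,x_i} = 0$, so the potentially divergent $\e_N^{-1/2}$ contribution vanishes; what survives from the linear piece is $-\sum_i u_i = -\e_N^{1/2}\sum_i\delta_{i,x_i}/r_i + \OO(\e_N^{3/2}\|\bb{\delta}_{\bb{x}}\|_1)$. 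The quadratic piece contributes $-\tfrac12\sum_i\delta_{i,x_i}^2/r_i$ — which cancels against the Mahalanobis term from the Gaussian side — plus $\tfrac{\e_N}{2}\sum_i(1+r_i)(\delta_{i,x_i}/r_i)^2$; the cubic piece contributes $\tfrac{\e_N^{1/2}}{3}\sum_i\delta_{i,x_i}(\delta_{i,x_i}/r_i)^2$; and the quartic piece contributes $-\tfrac{\e_N}{4}\sum_i\delta_{i,x_i}(\delta_{i,x_i}/r_i)^3$. Collecting the surviving $\e_N^{1/2}$- and $\e_N$-order terms reproduces \eqref{eq:LLT.order.2} exactly.

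The main obstacle is the uniform remainder bound over $B_{\eta}$. One must check that each product $r_i\e_N^{-1}\cdot u_i^k$ is genuinely of order $\e_N^{(k-2)/2}$ times a monomial in $\bb{\delta}_{\bb{x}}$, so that truncating the logarithm at $k=5$ contributes $\OO_{\bb{\alpha}\hspace{-0.2mm},\beta}(\e_N^{3/2}|\delta_{i,x_i}|^5)$. For the Lagrange remainder $u_i^5/(5(1+\xi_i)^5)$ of $\log(1+u_i)$ to be genuinely $\OO(|u_i|^5)$ one needs $x_i/r_i = 1+u_i$ bounded away from $0$ uniformly on $B_{\eta}$; this holds because $|u_i| \le (\eta/r_i)N^{1/6}\sqrt{\e_N/(1+\e_N)} \le (\eta/r_i)\,N^{-1/3}(\|\bb{\alpha}\|_1+\beta)^{-1/2}\to 0$, so that eventually $1+u_i\in[\tfrac12,\tfrac32]$. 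Finally, one gathers all accumulated errors — the $\OO(\e_N^3)$ Stirling remainders, the $\OO(\e_N^2\|\bb{\delta}_{\bb{x}}\|_1^j)$ corrections with $j\le 4$ coming from the $\log(1+\e_N)$ and $(1+\e_N)^{\pm k/2}$ expansions, and the $\OO(\e_N^{3/2}\|\bb{\delta}_{\bb{x}}\|_1^5)$ truncation term — and checks, using $\e_N^{1/2}\to 0$ and $\|\bb{\delta}_{\bb{x}}\|_1^j \le (1+\|\bb{\delta}_{\bb{x}}\|_1)^5$, that they are all dominated by $\OO_{\bb{\alpha}\hspace{-0.2mm},\beta\hspace{-0.2mm},\eta}\big(\e_N^{3/2}(1+\|\bb{\delta}_{\bb{x}}\|_1)^5\big) = \OO_{\bb{\alpha}\hspace{-0.2mm},\beta\hspace{-0.2mm},\eta}\big(N^{-3/2}(1+\|\bb{\delta}_{\bb{x}}\|_1)^5\big)$ since $\e_N = (N(\|\bb{\alpha}\|_1+\beta))^{-1}$. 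The rest is routine bookkeeping.
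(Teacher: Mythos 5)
Your proposal is correct and follows essentially the same route as the paper: take logarithms, apply Stirling's series to the Gamma factors, substitute $x_i/r_i = 1 + u_i$ with $u_i = (\delta_{i,x_i}/r_i)(1+\e_N^{-1})^{-1/2}$, Taylor-expand $\log(1+u_i)$ to fourth order, use $\sum_{i=1}^{d+1}\delta_{i,x_i} = 0$ to kill the $\e_N^{-1/2}$ term and $\bb{\delta}_{\bb{x}}^{\top}\Sigma_{\bb{r}}^{-1}\bb{\delta}_{\bb{x}} = \sum_{i=1}^{d+1}\delta_{i,x_i}^2/r_i$ to cancel the Mahalanobis term, then collect orders. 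The only (cosmetic) difference is that you keep all $d+1$ coordinates symmetric throughout, whereas the paper expands the $(d+1)$-th coordinate into multiple sums over $\{1,\dots,d\}$ before recombining at the end; your bookkeeping of the $B_{\eta}$ remainder is also the same as the paper's.
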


    \begin{proof}[Proof of Theorem~\ref{thm:p.k.expansion}]
        Using Stirling's formula,
        \begin{align}
            \log \Gamma(z) = \frac{1}{2} \log(2\pi) + (z - \tfrac{1}{2}) \log z - z + \frac{1}{12z} + \OO(z^{-3}), \quad z\to \infty,
        \end{align}
        see, e.g., \cite[p.257]{MR0167642}, and taking the logarithm in \eqref{eq:Dirichlet.pdf}, we obtain
        \begin{align}\label{eq:log.p.before}
            \log K_{N\hspace{-0.2mm},\bb{\alpha}\hspace{-0.2mm},\beta}(\bb{x})
            &= \log \Gamma(N \|\bb{\alpha}\|_1 + N \beta) - \sum_{i=1}^{d+1} \log \Gamma(N \alpha_i) + \sum_{i=1}^{d+1} (N \alpha_i - 1) \log x_i \notag \\
            &= -\frac{d}{2} \log(2\pi) - \frac{d}{2} \log \e_N - \frac{1}{2} \sum_{i=1}^{d+1} \log r_i + \sum_{i=1}^{d+1} (N \alpha_i - 1) \log\Big(\frac{x_i}{r_i}\Big) + \frac{\big\{1 - \sum_{i=1}^{d+1} r_i^{-1}\big\}}{12 \, N \, (\|\bb{\alpha}\|_1 + \beta)} + \OO_{\bb{\alpha}\hspace{-0.2mm},\beta}(N^{-3}).
        \end{align}
        By writing $\frac{x_i}{r_i} = 1 + \frac{\delta_{i,x_i}}{r_i} (1 + \e_N^{-1})^{-1/2}$ in \eqref{eq:log.p.before}, we deduce
        \begin{equation}\label{eq:log.p}
            \begin{aligned}
                \log K_{N\hspace{-0.2mm},\bb{\alpha}\hspace{-0.2mm},\beta}(\bb{x})
                &= -\log \sqrt{(2\pi)^d \, (1 + \e_N^{-1})^{-d} \, \prod_{i=1}^{d+1} r_i} - \frac{d}{2} \log(1 + \e_N) \\
                &\quad+ \sum_{i=1}^{d+1} (\e_N^{-1} r_i - 1) \log\Big(1 + \frac{\delta_{i,x_i}}{r_i} (1 + \e_N^{-1})^{-1/2}\Big) + \e_N \cdot \frac{1}{12} \Big\{1 - \sum_{i=1}^{d+1} r_i^{-1}\Big\} + \OO_{\bb{\alpha}\hspace{-0.2mm},\beta}(N^{-3}).
            \end{aligned}
        \end{equation}
        By applying the Taylor expansion
        \begin{align}
            \log(1 + y) = y - \frac{y^2}{2} + \frac{y^3}{3} - \frac{y^4}{4} + \OO_{\eta}(y^5), \quad \text{valid for } |y| \leq \eta < 1,
        \end{align}
        and noticing that $\delta_{d+1,x_{d+1}} = - \sum_{i=1}^d \delta_{i,x_i}$, we have
        \begin{equation}
            \begin{aligned}
                \log K_{N\hspace{-0.2mm},\bb{\alpha}\hspace{-0.2mm},\beta}(\bb{x})
                &= -\log \sqrt{(2\pi)^d \, (1 + \e_N^{-1})^{-d} \, \prod_{i=1}^{d+1} r_i} - \frac{d}{2} \left\{\e_N + \OO_{\bb{\alpha}\hspace{-0.2mm},\beta}(N^{-2})\right\} - (1 + \e_N^{-1})^{-1/2} \sum_{i=1}^{d+1} \frac{\delta_{i,x_i}}{r_i} + \frac{(1 + \e_N^{-1})^{-1}}{2} \sum_{i=1}^{d+1} \frac{\delta_{i,x_i}^2}{r_i^2} \\[-1mm]
                &\quad- (1 + \e_N)^{-1} \sum_{i=1}^d \frac{\delta_{i,x_i}^2}{2} \left\{\frac{1}{r_i} - \frac{2}{3} \cdot \frac{\delta_{i,x_i}}{r_i^2} (1 + \e_N^{-1})^{-1/2} + \frac{1}{2} \cdot \frac{\delta_{i,x_i}^2}{r_i^3} (1 + \e_N^{-1})^{-1} + \OO_{\bb{\alpha}\hspace{-0.2mm},\beta\hspace{-0.2mm},\eta}\Bigg(\frac{1 + |\delta_{i,x_i}|^3}{N^{3/2}}\Bigg)\right\} \\
                &\quad- (1 + \e_N)^{-1} \sum_{i,j=1}^d \frac{\delta_{i,x_i} \delta_{j,x_j}}{2} \left\{\frac{1}{r_{d+1}} + \frac{2}{3} \cdot \sum_{\ell=1}^d \frac{\delta_{\ell,x_{\ell}}}{r_{d+1}^2} (1 + \e_N^{-1})^{-1/2} + \frac{1}{2} \cdot \sum_{\ell\hspace{-0.2mm},m=1}^d \frac{\delta_{\ell,x_{\ell}} \delta_{m,x_m}}{r_{d+1}^3} (1 + \e_N^{-1})^{-1} + \OO_{\bb{\alpha}\hspace{-0.2mm},\beta\hspace{-0.2mm},\eta}\Bigg(\frac{(1 + \|\bb{\delta}_{\bb{x}}\|_1)^3}{N^{3/2}}\Bigg)\right\} \\
                &\quad+ \e_N \cdot \frac{1}{12} \Big\{1 - \sum_{i=1}^{d+1} r_i^{-1}\Big\} + \OO_{\bb{\alpha}\hspace{-0.2mm},\beta}(N^{-3}).
            \end{aligned}
        \end{equation}
        We can rewrite this as
        \begin{equation}\label{eq:LLT.order.2.log.before}
            \begin{aligned}
                \log K_{N\hspace{-0.2mm},\bb{\alpha}\hspace{-0.2mm},\beta}(\bb{x})
                &= -\log \sqrt{(2\pi)^d \, (1 + \e_N^{-1})^{-d} \, \prod_{i=1}^{d+1} r_i} - \e_N^{1/2} \sum_{i = 1}^{d+1} \frac{\delta_{i,x_i}}{r_i} + \frac{\e_N}{2} \sum_{i=1}^{d+1} \frac{\delta_{i,x_i}^2}{r_i^2} - (1 + \e_N)^{-1} \sum_{i,j=1}^d \frac{\delta_{i,x_i} \delta_{j,x_j}}{2} \left\{(\Sigma_{\bb{r}}^{-1})_{ij} + S_{N,ij}\right\} \\[-0.5mm]
                &\quad+ \e_N \cdot \Bigg[-\frac{d}{2} + \frac{1}{12} \Big\{1 - \sum_{i=1}^{d+1} r_i^{-1}\Big\}\Bigg] + \OO_{\bb{\alpha}\hspace{-0.2mm},\beta\hspace{-0.2mm},\eta}\Bigg(\frac{(1 + \|\bb{\delta}_{\bb{x}}\|_1)^2}{N^{3/2}}\Bigg).
            \end{aligned}
        \end{equation}
        where the $d \times d$ matrices $\Sigma_{\bb{r}}^{-1}$ and $S_N$ have the $(i,j)$ components:
        \begin{align}\label{eq:def.M.and.S}
            (\Sigma_{\bb{r}}^{-1})_{ij}
            &\leqdef \frac{1}{r_i} \bb{1}_{\{i = j\}} + \frac{1}{r_{d+1}}, \\
                S_{N,ij}
            &\leqdef \frac{2 \e_N^{1/2}}{3} \sum_{\ell=1}^d \frac{\delta_{\ell,x_\ell}}{(1 + \e_N)^{1/2}} \bigg\{\frac{-1}{r_i^2} \bb{1}_{\{i = j = \ell\}} + \frac{1}{r_{d+1}^2}\bigg\} + \frac{\e_N}{2} \sum_{\ell \hspace{-0.1mm},\hspace{-0.1mm} m = 1}^d \frac{\delta_{\ell,x_\ell} \delta_{m,x_m}}{(1 + \e_N)} \bigg\{\frac{1}{r_i^3} \bb{1}_{\{i = j = \ell = m\}} + \frac{1}{r_{d+1}^3}\bigg\} + \OO_{\bb{\alpha}\hspace{-0.2mm},\beta\hspace{-0.2mm},\eta}\Bigg(\frac{(1 + \|\bb{\delta}_{\bb{x}}\|_1)^3}{N^{3/2}}\Bigg).
        \end{align}
        After expanding \eqref{eq:LLT.order.2.log.before} using $(1 + \e_N)^{-1} = 1 - \e_N + \dots$, and rearranging some terms, we get
        \begin{equation}\label{eq:LLT.order.2.log.before}
            \begin{aligned}
                \log\left(\frac{K_{N\hspace{-0.2mm},\bb{\alpha}\hspace{-0.2mm},\beta}(\bb{x})}{(1 + \e_N^{-1})^{d/2} \phi_{\Sigma_{\bb{r}}}(\bb{\delta}_{\bb{x}})}\right)
                &= - \e_N^{1/2} \sum_{i = 1}^{d+1} \frac{\delta_{i,x_i}}{r_i} + \frac{\e_N}{2} \sum_{i=1}^{d+1} \frac{\delta_{i,x_i}^2}{r_i^2} + \e_N \sum_{i,j=1}^d \frac{\delta_{i,x_i} \delta_{j,x_j}}{2} (\Sigma_{\bb{r}}^{-1})_{ij} - \e_N^{1/2} \sum_{i,j,\ell=1}^d \frac{\delta_{i,x_i} \delta_{j,x_j} \delta_{\ell,x_{\ell}}}{3} \bigg\{\frac{-1}{r_i^2} \bb{1}_{\{i = j = \ell\}} + \frac{1}{r_{d+1}^2}\bigg\} \\[-0.5mm]
                &\quad- \e_N \sum_{i,j,\ell,m=1}^d \frac{\delta_{i,x_i} \delta_{j,x_j} \delta_{\ell,x_{\ell}} \delta_{m,x_m}}{4} \bigg\{\frac{1}{r_i^3} \bb{1}_{\{i = j = \ell = m\}} + \frac{1}{r_{d+1}^3}\bigg\} + \e_N \cdot \Bigg[-\frac{d}{2} + \frac{1}{12} \Big\{1 - \sum_{i=1}^{d+1} r_i^{-1}\Big\}\Bigg] \\
                &\quad+ \OO_{\bb{\alpha}\hspace{-0.2mm},\beta\hspace{-0.2mm},\eta}\Bigg(\frac{(1 + \|\bb{\delta}_{\bb{x}}\|_1)^5}{N^{3/2}}\Bigg).
            \end{aligned}
        \end{equation}
        To obtain \eqref{eq:LLT.order.2}, simply rewrite the above using the fact that $\delta_{d+1,x_{d+1}} = - \sum_{i=1}^d \delta_{i,x_i}$.
        This ends the proof.
    \end{proof}

\section{Applications}\label{sec:applications}

    In this section, we present two applications of Theorem~\ref{thm:p.k.expansion}.
    We find an upper bound on the total variation between Dirichlet and multivariate normal distributions (Section~\ref{sec:total.variation.bound}) and we present an alternative proof for the asymptotic variance of Dirichlet kernel estimators found in Theorem~4.2 of \cite{arXiv:2002.06956} (Section~\ref{sec:Dirichlet.asymmetric.kernel}).

    \vspace{-4mm}
    \subsection{Total variation bound between Dirichlet and multivariate normal distributions}\label{sec:total.variation.bound}

        \begin{theorem}\label{thm:total.variation}
            Let $(\bb{\alpha},\beta)\in (0,\infty)^{d+1}$ be given.
            Let $\PP_{\bb{\alpha}\hspace{-0.2mm},\beta}$ be the probability measure on $\R^d$ induced by the $\mathrm{Dirichlet}\hspace{0.2mm}(N \bb{\alpha}, N \beta)$ distribution, and let $\QQ_{\bb{\alpha}\hspace{-0.2mm},\beta}$ be the probability measure on $\R^d$ induced by the $\mathrm{Normal}_d(\bb{r}, (1 + \e_N^{-1})^{-1} \, \Sigma_{\bb{r}})$ distribution, where recall $\Sigma_{\bb{r}} \leqdef \mathrm{diag}(\bb{r}) - \bb{r} \bb{r}^{\top}$.
            Then, we have, as $N\to \infty$,
            \vspace{-2mm}
            \begin{align}
                \|\PP_{\bb{\alpha}\hspace{-0.2mm},\beta} - \QQ_{\bb{\alpha}\hspace{-0.2mm},\beta}\| = \OO\left(\e_N^{1/2} \cdot d \, \sqrt{\frac{\max_{1 \leq i \leq d+1} r_i}{\min_{1 \leq i \leq d+1} r_i}}\right),
            \end{align}
            where $\| \cdot \|$ denotes the total variation norm.
        \end{theorem}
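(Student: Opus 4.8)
The plan is to use the standard bound $\|\PP - \QQ\| \le \sqrt{\tfrac12 \mathrm{KL}(\PP \| \QQ)}$ (Pinsker's inequality), and then control the Kullback--Leibler divergence by splitting the integral over $\mathcal{S}_d$ into the bulk $B_\eta$ (where Theorem~\ref{thm:p.k.expansion} applies) and the tail $\mathcal{S}_d \setminus B_\eta$. Actually, since we want total variation directly, the cleanest route is: write $\|\PP_{\bb{\alpha},\beta} - \QQ_{\bb{\alpha},\beta}\| = \frac12 \int_{\mathcal{S}_d} |K_{N,\bb{\alpha},\beta}(\bb{x}) - (1+\e_N^{-1})^{d/2}\phi_{\Sigma_{\bb{r}}}(\bb{\delta}_{\bb{x}})| \, \rd \bb{x}$ plus the mass that $\QQ_{\bb{\alpha},\beta}$ puts outside $\mathcal{S}_d$, and bound each piece.

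\medskip

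\noindent First I would handle the bulk. On $B_\eta$, Theorem~\ref{thm:p.k.expansion} gives
\[
\frac{K_{N,\bb{\alpha},\beta}(\bb{x})}{(1+\e_N^{-1})^{d/2}\phi_{\Sigma_{\bb{r}}}(\bb{\delta}_{\bb{x}})} = \exp\!\big(\e_N^{1/2} A(\bb{\delta}_{\bb{x}}) + \e_N B(\bb{\delta}_{\bb{x}}) + \OO_{\bb{\alpha},\beta,\eta}(\e_N^{3/2}(1+\|\bb{\delta}_{\bb{x}}\|_1)^5)\big),
\]
where $A$ and $B$ are the explicit polynomials in $\bb{\delta}_{\bb{x}}$ appearing in \eqref{eq:LLT.order.2}. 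Using $|e^t - 1| \le |t| e^{|t|}$ and the crude bound $\|\bb{\delta}_{\bb{x}}\|_1 \le (d+1)\eta N^{1/6}$ valid on $B_\eta$, the exponent is $\OO_{\bb{\alpha},\beta,\eta}(\e_N^{1/2}(1+\|\bb{\delta}_{\bb{x}}\|_1)^3)$, hence small. Therefore
\[
\int_{B_\eta} \big|K_{N,\bb{\alpha},\beta}(\bb{x}) - (1+\e_N^{-1})^{d/2}\phi_{\Sigma_{\bb{r}}}(\bb{\delta}_{\bb{x}})\big|\,\rd \bb{x} \;=\; \OO_{\bb{\alpha},\beta,\eta}\!\Big(\e_N^{1/2}\, \EE\big[ |A(\bb{Z})| + \e_N^{1/2}|B(\bb{Z})|\big]\Big),
\]
after the change of variables $\bb{x}\mapsto \bb{\delta}_{\bb{x}}$ turns $(1+\e_N^{-1})^{d/2}\phi_{\Sigma_{\bb{r}}}(\bb{\delta}_{\bb{x}})\,\rd\bb{x}$ into the density $\phi_{\Sigma_{\bb{r}}}(\bb{z})\,\rd\bb{z}$ of $\bb{Z}\sim \mathrm{Normal}_d(\bb{0},\Sigma_{\bb{r}})$ restricted to the image of $B_\eta$. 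The dominant term of $A$ is $-\sum_{i=1}^{d+1} \delta_{i,x_i}/r_i$, whose expectation under the Gaussian is controlled by $\EE[|Z_i|]/r_i \asymp \sqrt{\mathrm{Var}(Z_i)}/r_i = \sqrt{r_i(1-r_i)}/r_i$; summing over $i$ and recalling $d+1$ indices yields the factor $d\,\sqrt{\max_i r_i / \min_i r_i}$ up to constants (one uses $\sqrt{r_i(1-r_i)}/r_i \le \sqrt{\max_j r_j}/\sqrt{r_i} \le \sqrt{\max_j r_j / \min_j r_j}$, and the cubic term $\tfrac13\sum \delta_{i,x_i}(\delta_{i,x_i}/r_i)^2$ contributes at the same or lower order since third absolute moments of Gaussians are also controlled by the covariances). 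The $\e_N B$ term is of strictly smaller order. This produces exactly the claimed rate.

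\medskip

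\noindent Next I would dispatch the tails. For $\QQ_{\bb{\alpha},\beta}$, I need $\QQ_{\bb{\alpha},\beta}(\mathcal{S}_d^c) + \QQ_{\bb{\alpha},\beta}(\mathcal{S}_d \setminus B_\eta)$, which is a Gaussian tail probability: under $\QQ_{\bb{\alpha},\beta}$ each $\delta_{i,x_i}$ has variance $r_i(1-r_i)$, so $\PP(|\delta_{i,x_i}| > \eta N^{1/6})$ decays like $\exp(-c_{\bb{\alpha},\beta}\,\eta^2 N^{1/3})$, which is $\oo(\e_N^{1/2})$ — negligible. For $\PP_{\bb{\alpha},\beta}$, I need $\PP_{\bb{\alpha},\beta}(\mathcal{S}_d\setminus B_\eta)$, i.e.\ the Dirichlet probability that some $|\delta_{i,x_i}|$ exceeds $\eta N^{1/6}$. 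Since each coordinate $x_i$ under $\mathrm{Dirichlet}(N\bb{\alpha},N\beta)$ is marginally $\mathrm{Beta}(N\alpha_i, N(\|\bb{\alpha}\|_1+\beta-\alpha_i))$, concentrated around $r_i$ with fluctuations of order $N^{-1/2}$, a Bernstein- or Chernoff-type bound for Beta random variables (e.g.\ via the MGF, or via \cite{arXiv:2002.06956} where such bounds are already used for the Dirichlet kernel analysis) gives the same kind of $\exp(-c\,\eta^2 N^{1/3})$ decay, again $\oo(\e_N^{1/2})$. Adding the bulk estimate and the two tail estimates, and using $\|\PP - \QQ\| \le \frac12 \int|\rd\PP - \rd\QQ|$ plus the out-of-simplex mass, yields the theorem.

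\medskip

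\noindent The main obstacle I expect is the tail bound for the Dirichlet distribution: one must show that the Dirichlet (not just its marginals) assigns exponentially small mass to $\mathcal{S}_d \setminus B_\eta$ at the scale $N^{1/6}$, and do so with constants depending only on $(\bb{\alpha},\beta)$. A union bound over the $d+1$ coordinates reduces this to a one-dimensional Beta tail estimate, which is standard (sub-gamma behaviour of $\log$ of the density, or direct Chernoff on the Beta MGF), but it needs to be stated carefully so that the resulting bound is genuinely $\oo(\e_N^{1/2})$ and thus does not spoil the rate; everything else is moment computations for the multivariate normal $\mathrm{Normal}_d(\bb{0},\Sigma_{\bb{r}})$, which are routine.
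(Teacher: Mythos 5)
Your route is genuinely different from the paper's, and it works for the stated rate in $N$. The paper does not bound $\int |\rd\PP - \rd\QQ|$ directly: it invokes the comparison of total variation with the Hellinger distance from \cite{MR1922539}, which reduces the problem to bounding $\PP(\bb{X}\in B_{1/2}^c)$ plus the \emph{expectation} of $\log(\rd\PP/\rd\QQ)$ over the bulk, and then takes a square root at the very end. That route forces the paper to exploit a cancellation: the leading $\e_N^{1/2}$ term of the expansion is an odd polynomial in $\bb{\delta}_{\bb{x}}$ whose expectation under $\PP$ is only $\OO(\e_N^{1/2})$ (this is the content of Lemmas~\ref{lem:Leblanc.2012.boundary.Lemma.1} and \ref{lem:Leblanc.2012.boundary.Lemma.1.with.set.A}, computing first and third central Dirichlet moments), so the truncated KL-type quantity is $\OO(\e_N)$ and its square root gives $\e_N^{1/2}$. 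Your $L^1$ argument needs no such cancellation, because $|e^W-1|\le |W|e^{|W|}$ already produces a prefactor $\e_N^{1/2}$ before any expectation is taken; the price is that you must compute $\EE|A(\bb{Z})|$ with the absolute value inside, and you must separately control the Gaussian mass outside $\mathcal{S}_d$ and outside $B_\eta$ (which the Hellinger inequality handles for free on the $\QQ$ side). Both tail estimates you invoke (Gaussian and Beta/Dirichlet via a union bound) are exactly of the type the paper uses for $\PP(B_{1/2}^c)$, so that part is fine.

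One quantitative slip: your chain $\sqrt{r_i(1-r_i)}/r_i \le \sqrt{\max_j r_j}/\sqrt{r_i}$ requires $1-r_i\le \max_j r_j$, which is false in general (take $r_i\equiv 1/(d+1)$; then $1-r_i = d/(d+1) > \max_j r_j$). The correct bound is $1-r_i \le d\max_j r_j$, which costs an extra $\sqrt{d}$, and summing $\EE[|Z_i|^3]/r_i^2 \lesssim r_i^{-1/2}$ over $d+1$ indices likewise yields $(d+1)/\sqrt{\min_j r_j}\le (d+1)^{3/2}\sqrt{\max_j r_j/\min_j r_j}$. So as written your argument delivers $\OO(\e_N^{1/2}\, d^{3/2}\sqrt{\max_i r_i/\min_i r_i})$ rather than the factor $d$ in the theorem (for the linear part you can recover $d$ by treating $\sum_i \delta_{i,x_i}/r_i$ as a single Gaussian with variance $\sum_i r_i^{-1}-(d+1)^2$, but the cubic part resists this). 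Since the theorem is an asymptotic statement as $N\to\infty$ for fixed $(\bb{\alpha},\beta)$ and hence fixed $d$, this only affects the dimensional constant, not the rate; still, if you want the constant exactly as displayed, the paper's moment-cancellation route is what buys it.
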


        Given the many relations there exist between the total variation and other probability metrics such as the discrepancy metric, the Prokhorov metric and the Hellinger distance (see, e.g., \cite[p.421]{doi:10.2307/1403865}), many corollaries follow straightforwardly from Theorem~\ref{thm:total.variation}.
        The details are omitted for conciseness.

        \begin{proof}[Proof of Theorem~\ref{thm:total.variation}]
            Let $\bb{X}\sim \PP_{\bb{\alpha}\hspace{-0.2mm},\beta}$.
            By the comparison of the total variation norm with the Hellinger distance on page 726 of \cite{MR1922539}, we already know that
            \vspace{-2mm}
            \begin{align}\label{eq:first.bound.total.variation}
                \|\PP_{\bb{\alpha}\hspace{-0.2mm},\beta} - \QQ_{\bb{\alpha}\hspace{-0.2mm},\beta}\| \leq \sqrt{2 \, \PP\big(\bb{X}\in B_{1/2}^c\big) + \EE\left[\log\Big(\frac{\rd \PP_{\bb{\alpha}\hspace{-0.2mm},\beta}}{\rd \QQ_{\bb{\alpha}\hspace{-0.2mm},\beta}}(\bb{X})\Big) \, \ind_{\{\bb{X}\in B_{1/2}\}}\right]}.
            \end{align}
            Then, by applying a union bound followed by large deviation bounds for the beta distribution (see, e.g., Theorem~2.1 of \cite{MR3718704}), we get, for $N$ large enough,
            \begin{align}\label{eq:concentration.bound}
                \PP\big(\bb{X}\in B_{1/2}^c\big)
                &\leq \sum_{i=1}^{d+1} \PP\Big(|\delta_{i,X_i}| > \frac{1}{2} N^{1/6}\Big) \leq (d + 1) \cdot 2 \, \exp\Big(-\frac{1}{2} N^{1/3}\Big).
            \end{align}
            By Theorem~\ref{thm:p.k.expansion},
            \begin{equation}\label{eq:estimate.I.begin}
                \begin{aligned}
                    \EE\left[\log\bigg(\frac{\rd \PP_{\bb{\alpha}\hspace{-0.2mm},\beta}}{\rd \QQ_{\bb{\alpha}\hspace{-0.2mm},\beta}}(\bb{X})\bigg) \, \ind_{\{\bb{X}\in B_{1/2}\}}\right]
                    &= \e_N^{1/2} \cdot \EE\left[\Bigg\{- \sum_{i = 1}^d \delta_{i,x_i} \bigg(\frac{1}{r_i} - \frac{1}{r_{d+1}}\bigg) + \frac{1}{3} \sum_{i\hspace{-0.1mm},\hspace{0.1mm} j\hspace{-0.1mm},\hspace{0.1mm} \ell = 1}^d \delta_{i,x_i} \delta_{j,x_j} \delta_{\ell,x_{\ell}} \bigg(\frac{1}{r_i^2} \ind_{\{i = j = \ell\}} - \frac{1}{r_{d+1}^2}\bigg)\Bigg\} \, \ind_{\{\bb{X}\in B_{1/2}\}}\right] \\
                    &\quad+ \e_N \cdot \OO\left(\Bigg|\sum_{i=1}^{d+1} \frac{\EE[(X_i - r_i)^2]}{\e_N r_i^2}\Bigg| + \Bigg|\sum_{i=1}^{d+1} \frac{\EE[(X_i - r_i)^4]}{\e_N^2 r_i^3}\Bigg| + d + \sum_{i=1}^{d+1} r_i^{-1}\right) + \OO_{d,\bb{\alpha}\hspace{-0.2mm},\beta}(N^{-3/2}).
                \end{aligned}
            \end{equation}
            By Lemma~\ref{lem:Leblanc.2012.boundary.Lemma.1}, the second to last $\OO(\cdot)$ term above is
            \begin{align}\label{eq:estimate.I.next}
                =\OO\Bigg(\sum_{i=1}^{d+1} r_i^{-1}\Bigg) = \OO\left(\frac{d}{\min_{1 \leq i \leq d+1} r_i}\right) = \OO\left(d^{\hspace{0.2mm}2} \, \frac{\max_{1 \leq i \leq d+1} r_i}{\min_{1 \leq i \leq d+1} r_i}\right).
            \end{align}
            (The last equality follows from $\frac{1}{d + 1} \leq \max_{1 \leq i \leq d + 1} r_i$, which itself is consequence of the fact that $r_i \geq 0$ and $\sum_{i=1}^{d+1} r_i = 1$.)
            By putting \eqref{eq:estimate.I.next} in \eqref{eq:estimate.I.begin} and using Lemma~\ref{lem:Leblanc.2012.boundary.Lemma.1.with.set.A}, we get
            \vspace{-2mm}
            \begin{align}\label{eq:estimate.I}
                \eqref{eq:estimate.I.begin}
                &= \e_N^{1/2} \cdot \Bigg\{\frac{\e_N^2 (1 + \e_N^{-1})^{3/2}}{3} \cdot \sum_{i\hspace{-0.1mm},\hspace{0.1mm} j\hspace{-0.1mm},\hspace{0.1mm} \ell = 1}^d \frac{\Big(
                    \begin{array}{l}
                        4 r_i r_j r_{\ell} - 2 r_i r_{\ell} \ind_{\{i = j\}} - 2 r_j r_{\ell} \ind_{\{i = \ell\}} \\[-1.2mm]
                        - 2 r_i r_j \ind_{\{j = \ell\}} + 2 r_i \ind_{\{i = j = \ell\}}
                    \end{array}
                    \Big)}{(1 + \e_N) (1 + 2 \e_N)} \cdot \Big\{\frac{1}{r_i^2} \bb{1}_{\{i = j = \ell\}} - \frac{1}{r_{d+1}^2}\Big\} + \OO\Bigg(\frac{d^{\hspace{0.2mm}3} \, (\PP(\bb{X}\in B_{1/2}^c))^{1/4}}{(\min_{1 \leq i \leq d+1} r_i)^2}\Bigg)\Bigg\} \notag \\
                &\qquad+ \e_N \cdot \OO\Bigg(d^{\hspace{0.2mm}2} \, \frac{\max_{1 \leq i \leq d+1} r_i}{\min_{1 \leq i \leq d+1} r_i}\Bigg) + \OO_{d,\bb{\alpha}\hspace{-0.2mm},\beta}(N^{-3/2}) \notag \\[1mm]
                &= \OO\Bigg(\e_N^{1/2} \cdot \frac{d^{\hspace{0.2mm}3} \, (\PP(\bb{X}\in B_{1/2}^c))^{1/4}}{(\min_{1 \leq i \leq d+1} r_i)^2}\Bigg) + \OO\Bigg(\e_N \cdot d^{\hspace{0.2mm}2} \, \frac{\max_{1 \leq i \leq d+1} r_i}{\min_{1 \leq i \leq d+1} r_i}\Bigg).
            \end{align}

            \vspace{1mm}
            \noindent
            Now, putting \eqref{eq:concentration.bound} and \eqref{eq:estimate.I} together in \eqref{eq:first.bound.total.variation} gives the conclusion.
        \end{proof}

    \subsection{Asymptotic variance of Dirichlet kernel estimators}\label{sec:Dirichlet.asymmetric.kernel}

        Assume that we have a sequence of observations $\bb{X}_1, \bb{X}_2, \dots, \bb{X}_n$ that are independent and $F$ distributed ($F$ is unknown), with density $f$ supported on the $d$-dimensional simplex $\mathcal{S}_d$.
        Then, for a given bandwidth parameter $b > 0$, let
        \begin{align}\label{eq:Dirichlet.estimator}
            \hat{f}_{n,b}(\bb{s}) \leqdef \frac{1}{n} \sum_{i=1}^n K_{1/b,\bb{s} + b, 1 - \|\bb{s}\|_1 + b}(\bb{X}_i), \quad \bb{s}\in \mathcal{S}_d,
        \end{align}
        be the {\it Dirichlet kernel estimator} for the density function $f$.
        This estimator was introduced by \cite{doi:10.2307/2347365} as a nonparametric method of density estimation for compositional data and its asymptotic properties were studied theoretically for the first time in \cite{arXiv:2002.06956}.
        For a detailed overview of the literature on asymmetric kernel estimators, we refer the reader to \cite{MR3821525} or Section~2 in \cite{arXiv:2002.06956}.

        One interesting application of the normal approximation in Theorem~\ref{thm:p.k.expansion} is the derivation of the asymptotic variance of $\hat{f}_{n,b}$ at each point $\bb{s}$ in the interior of the simplex.
        This result was already known from Theorem~4.2 in \cite{arXiv:2002.06956}, but the method of proof we present here is completely different.

        \begin{theorem}\label{thm:asymptotic.variance}
            Assume that $f$ is Lipschitz continuous and let $\bb{s}\in \mathrm{Int}(\mathcal{S}_d)$, then
            \begin{align}
                \VV(\hat{f}_{n,b}(\bb{s})) = \frac{n^{-1} b^{-d/2} (f(\bb{s}) + \OO_{d,\bb{s}}(b^{1/2}))}{\sqrt{(4\pi)^d \, \prod_{i=1}^{d+1} s_i}}, \quad n\to \infty.
            \end{align}
        \end{theorem}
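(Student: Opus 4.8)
The plan is to begin from the variance identity for an average of independent and identically distributed terms,
\begin{equation}
    \VV(\hat f_{n,b}(\bb{s})) = n^{-1}\Big(\EE\big[K^2(\bb{X}_1)\big] - \big(\EE[K(\bb{X}_1)]\big)^2\Big), \qquad K \leqdef K_{1/b,\,\bb{s}+b,\,1-\|\bb{s}\|_1+b},
\end{equation}
and to show that the divergence sits entirely in the second moment. Since $K$ is a probability density concentrating at its mean $\bb{r}$, which tends to $\bb{s}$ as $b \to 0$, and $f$ is bounded and continuous, $\EE[K(\bb{X}_1)] = \int_{\mathcal{S}_d} K f = f(\bb{s}) + \oo(1)$, so $n^{-1}(\EE[K(\bb{X}_1)])^2 = \OO_{d,\bb{s}}(n^{-1})$ is absorbed into the stated error once the factor $b^{-d/2}$ is pulled out (recall $d \geq 1$). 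For this kernel $N = 1/b$ and $\|\bb{\alpha}\|_1 + \beta = 1 + (d+1)b$, hence $\e_N = b/(1+(d+1)b)$, $r_i = (s_i+b)/(1+(d+1)b)$ for $i \in \{1,\dots,d+1\}$ (with $s_{d+1} \leqdef 1 - \|\bb{s}\|_1$), $(1+\e_N^{-1})^{d/2} = b^{-d/2}(1+\OO_d(b))$, $(1+\e_N^{-1})^{-1/2} = b^{1/2}(1+\OO_d(b))$, and $\prod_{i=1}^{d+1} r_i = \big(\prod_{i=1}^{d+1} s_i\big)(1+\OO_{d,\bb{s}}(b))$ since $\bb{s}\in\mathrm{Int}(\mathcal{S}_d)$.

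Next I would split $\EE[K^2(\bb{X}_1)] = \int_{\mathcal{S}_d} K^2 f$ over the bulk $B_{1/2}$ of the Dirichlet law underlying $K$ and its complement. Over $B_{1/2}^c$, bound $\int_{B_{1/2}^c} K^2 f \leq \|f\|_\infty \, \|K\|_\infty \int_{B_{1/2}^c} K$; the mode of this Dirichlet law is the interior point $\bb{s}$, hence lies in $B_{1/2}$ for $N$ large, so Theorem~\ref{thm:p.k.expansion} gives $\|K\|_\infty = \OO_{d,\bb{s}}(b^{-d/2})$, while $\int_{B_{1/2}^c} K$ is exponentially small by the union-bound-plus-beta-tail argument behind \eqref{eq:concentration.bound}, making this whole contribution $\OO(b^{-d/2} e^{-c\,b^{-1/3}})$, which is negligible. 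Over $B_{1/2}$, exponentiating Theorem~\ref{thm:p.k.expansion} writes $K(\bb{x}) = (1+\e_N^{-1})^{d/2} \phi_{\Sigma_{\bb{r}}}(\bb{\delta}_{\bb{x}}) \, e^{R(\bb{x})}$ with $R$ equal to the right-hand side of \eqref{eq:LLT.order.2}; after the change of variables $\bb{x} \mapsto \bb{\delta}_{\bb{x}} = (1+\e_N^{-1})^{1/2}(\bb{x}-\bb{r})$ (Jacobian $(1+\e_N^{-1})^{-d/2}$) and the identity $\phi_{\Sigma_{\bb{r}}}(\bb{y})^2 = \phi_{\Sigma_{\bb{r}}/2}(\bb{y})\big/\sqrt{(4\pi)^d \det\Sigma_{\bb{r}}}$, together with $\det\Sigma_{\bb{r}} = \prod_{i=1}^{d+1} r_i$, this becomes
\begin{equation}
    \int_{B_{1/2}} K^2 f = \frac{(1+\e_N^{-1})^{d/2}}{\sqrt{(4\pi)^d \, \prod_{i=1}^{d+1} r_i}} \int_{\mathcal{B}_N} \phi_{\Sigma_{\bb{r}}/2}(\bb{\delta}) \, e^{2R} \, f\big(\bb{r} + (1+\e_N^{-1})^{-1/2}\bb{\delta}\big) \, \rd\bb{\delta},
\end{equation}
where $\mathcal{B}_N \leqdef \{\bb{\delta}\in\R^d : |\delta_i| \leq \tfrac{1}{2} N^{1/6},\ i=1,\dots,d+1\}$ (with $\delta_{d+1} \leqdef -\sum_{i=1}^d \delta_i$) is the image of $B_{1/2}$, and $\phi_{\Sigma_{\bb{r}}/2}$ is the centered Gaussian density with covariance $\Sigma_{\bb{r}}/2$.

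It remains to identify the last integral as $f(\bb{s}) + \OO_{d,\bb{s}}(b^{1/2})$, after which the prefactors above close the proof. The crux is uniform control of $e^{2R}$ on the whole bulk: because $|\delta_i| \leq \tfrac{1}{2} N^{1/6}$ and $\e_N^{1/2} N^{1/6} \asymp N^{-1/3} \to 0$, the cubic-in-$\bb{\delta}$ growth of $R$ is dominated by the quadratic decay in $\phi_{\Sigma_{\bb{r}}}^2$, so for $N$ large one has $2R(\bb{x}) \leq \tfrac{1}{2} \bb{\delta}^{\top} \Sigma_{\bb{r}}^{-1} \bb{\delta} + \OO(1)$ on $B_{1/2}$, hence $\phi_{\Sigma_{\bb{r}}/2}(\bb{\delta}) \, e^{2R} \leq C_{d,\bb{s}} \, e^{-c_d \|\bb{\delta}\|_1^2}$ is bounded by a fixed integrable Gaussian (using $\Sigma_{\bb{r}}^{-1} \succeq I$, immediate from \eqref{eq:Sigma}). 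Consequently the region $\|\bb{\delta}\|_1 > N^{1/12}$ contributes only $\OO(e^{-c\,N^{1/6}})$ and is discarded, while on $\|\bb{\delta}\|_1 \leq N^{1/12}$ one has $|R| = \oo(1)$, so $e^{2R} = 1 + \OO(|R|)$ with $\int \phi_{\Sigma_{\bb{r}}/2} \, |R| = \OO(\e_N^{1/2}) = \OO(b^{1/2})$ (each term of $R$ is $\e_N^{1/2}$, or a higher power, times a fixed-degree polynomial in $\bb{\delta}$ with $\OO_{d,\bb{s}}(1)$ Gaussian moments). Finally, Lipschitz continuity of $f$ gives $f(\bb{r} + (1+\e_N^{-1})^{-1/2}\bb{\delta}) = f(\bb{r}) + \OO_{d,\bb{s}}(b^{1/2}\|\bb{\delta}\|_1)$, with $\int_{\R^d} \phi_{\Sigma_{\bb{r}}/2}(\bb{\delta}) \, \|\bb{\delta}\|_1 \, \rd\bb{\delta} = \OO_{d,\bb{s}}(1)$ and $\int_{\R^d} \phi_{\Sigma_{\bb{r}}/2} = 1$, and extending the integral over $\|\bb{\delta}\|_1 \leq N^{1/12}$ back to all of $\R^d$ costs only $\OO(e^{-c\,N^{1/6}})$; combining this with $f(\bb{r}) = f(\bb{s}) + \OO_{d,\bb{s}}(b)$ and the prefactor identities from the first paragraph yields the theorem. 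The main obstacle is precisely this uniform bound on $e^{2R}$ over the bulk --- where $R$ itself is only $\OO(1)$ near $\partial B_{1/2}$ --- which is handled by playing the cubic growth of $R$ off against the Gaussian tail of $\phi_{\Sigma_{\bb{r}}}^2$; the rest is routine Gaussian moment bookkeeping.
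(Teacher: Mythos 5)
Your proposal is correct and follows essentially the same route as the paper: the same variance identity, the same substitution of the Gaussian approximation from Theorem~\ref{thm:p.k.expansion} into $\EE[K^2]$, the same $\phi_{\Sigma_{\bb{r}}}^2 = \phi_{\Sigma_{\bb{r}}/2}/\sqrt{(4\pi)^d\det\Sigma_{\bb{r}}}$ identity, and the same Lipschitz and $r_i = s_i + \oo(1)$ bookkeeping. The only difference is that you spell out the bulk/tail split and the uniform domination of $e^{2R}$ by the Gaussian decay, which the paper compresses into the $\oo_{d,\bb{s}}(1)$ error in \eqref{eq:histogram.estimator.var.asymp.next}.
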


        From this result, other asymptotic expressions can be derived such as the mean squared error and the mean integrated squared error and we can also optimize the bandwidth parameter $b$ with respect to them, see, e.g., Corollary~4.3 and Theorem~4.4 in \cite{arXiv:2002.06956}.

        \begin{proof}[Proof of Theorem~\ref{thm:asymptotic.variance}]
            Straightforward computations show that
            \begin{align}\label{eq:histogram.estimator.var.asymp}
                \VV(\hat{f}_{n,b}(\bb{s}))
                &= n^{-1} \, \EE\big[K_{1/b, \bb{s} + b, 1 - \|\bb{s}\|_1 + b}(\bb{X})^2\big] - n^{-1} \big(\EE\big[K_{1/b, \bb{s} + b, 1 - \|\bb{s}\|_1 + b}(\bb{X})\big]\big)^2 \notag \\[0.5mm]
                &= n^{-1} \, \EE\big[K_{1/b, \bb{s} + b, 1 - \|\bb{s}\|_1 + b}(\bb{X})^2\big] - \OO(n^{-1}),
            \end{align}
            where
            \vspace{-1mm}
            \begin{align}\label{eq:histogram.estimator.var.asymp.next}
                \EE\big[K_{1/b, \bb{s} + b, 1 - \|\bb{s}\|_1 + b}(\bb{X})^2\big]
                &\stackrel{\eqref{eq:LLT.order.2}}{=} \int_{\mathcal{S}_d} \left(\frac{\exp\big(-\frac{1}{2} \bb{\delta}_{\bb{x}}^{\top} \Sigma_{\bb{r}}^{-1} \, \bb{\delta}_{\bb{x}}\big)}{\sqrt{(2\pi)^d \, (1 + \e_N^{-1})^{-d} \, \prod_{i=1}^{d+1} r_i}}\right)^2 f(\bb{x}) \, \rd \bb{x} + \oo_{d,\bb{s}}(1) \notag \\
                &\stackrel{\phantom{\eqref{eq:LLT.order.2}}}{=} \frac{2^{-d/2} (f(\bb{s}) + \OO_{d,\bb{s}}(b^{1/2}))}{\sqrt{(2\pi)^d \, b^{\hspace{0.2mm}d} \, \prod_{i=1}^{d+1} r_i}} \int_{\mathcal{S}_d} \frac{\exp\big(-\frac{1}{2} \bb{\delta}_{\bb{x}}^{\top} (\frac{1}{2} \Sigma_{\bb{r}})^{-1} \, \bb{\delta}_{\bb{x}}\big)}{\sqrt{(2\pi)^d \, 2^{-d} \, (1 + \e_N^{-1})^{-d} \, \prod_{i=1}^{d+1} r_i}} \rd \bb{x} \, + \, \oo_{d,\bb{s}}(1) \notag \\[0.5mm]
                &\stackrel{\phantom{\eqref{eq:LLT.order.2}}}{=} \frac{b^{-d/2} (f(\bb{s}) + \OO_{d,\bb{s}}(b^{1/2}))}{\sqrt{(4\pi)^d \, \prod_{i=1}^{d+1} r_i}} (1 + \oo_d(1)) + \oo_{d,\bb{s}}(1).
            \end{align}
            Since $r_i = (s_i + b) / (1 + b (d + 1)) = s_i + \oo_{d,\bb{s}}(1)$ for all $i\in \{1,2,\dots,d+1\}$, plugging the estimate \eqref{eq:histogram.estimator.var.asymp.next} in \eqref{eq:histogram.estimator.var.asymp} gives us the conclusion.
        \end{proof}

\appendix

\section{Moments of the Dirichlet distribution}

Below, we compute some of the central moments (up to four) of the Dirichlet distribution.
The lemma is used to estimate the $\asymp \e_N$ errors in \eqref{eq:estimate.I.begin} of the proof of Theorem~\ref{thm:total.variation}, and also as a preliminary result for the proof of Lemma~\ref{lem:Leblanc.2012.boundary.Lemma.1.with.set.A}.

\begin{lemma}\label{lem:Leblanc.2012.boundary.Lemma.1}
    Let $N\in \N$ and $(\bb{\alpha},\beta)\in (0,\infty)^{d+1}$ be given.
    If $\bb{\XX} = (\XX_1,\XX_2,\dots,\XX_d)\sim \mathrm{Dirichlet}\hspace{0.2mm}(N \bb{\alpha}, N \beta)$ according to \eqref{eq:Dirichlet.pdf}, then, for all $i,j,\ell\in \{1,2,\dots,d\}$,
    \vspace{-3mm}
    \begin{align}
        &\EE\big[(\XX_i - r_i)(\XX_j - r_j)\big] = \e_N r_i \cdot \frac{(\ind_{\{i = j\}} - r_j)}{(1 + \e_N)}, \label{eq:thm:central.moments.eq.2} \\
        &\EE\big[(\XX_i - r_i)(\XX_j - r_j)(\XX_{\ell} - r_{\ell})\big] = \e_N^2 \cdot \frac{(4 r_i r_j r_{\ell} - 2 r_i r_{\ell} \ind_{\{i = j\}} - 2 r_j r_{\ell} \ind_{\{i = \ell\}} - 2 r_i r_j \ind_{\{j = \ell\}} + 2 r_i \ind_{\{i = j = \ell\}})}{(1 + \e_N) (1 + 2 \e_N)}, \label{eq:thm:central.moments.eq.3} \\[0.5mm]
        &\EE\big[(\XX_i - r_i)^4\big] = \e_N^2 r_i^2 \cdot 3 (1 - r_i)^2 + \OO_{\bb{\alpha}\hspace{-0.2mm},\beta}(N^{-3}), \label{eq:thm:central.moments.4.0}
    \end{align}
    where recall $\e_N \leqdef 1 / (N \|\bb{\alpha}\|_1 + N \beta)$ and $r_i \leqdef \EE[\XX_i] = \alpha_i / (\|\bb{\alpha}\|_1 + \beta)$ for all $i\in \{1,2,\dots,d\}$.
\end{lemma}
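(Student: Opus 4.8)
The plan is to compute the required central moments by first expressing the raw mixed moments of the Dirichlet distribution in closed form, and then converting them to central moments by the usual binomial expansion. The key classical fact is that if $\bb{\XX}\sim \mathrm{Dirichlet}\hspace{0.2mm}(N\bb{\alpha},N\beta)$, then for any nonnegative integers $k_1,\dots,k_d$,
\begin{align*}
    \EE\Big[\prod_{i=1}^d \XX_i^{k_i}\Big] = \frac{\Gamma(N\|\bb{\alpha}\|_1 + N\beta)}{\Gamma(N\|\bb{\alpha}\|_1 + N\beta + \sum_i k_i)} \prod_{i=1}^d \frac{\Gamma(N\alpha_i + k_i)}{\Gamma(N\alpha_i)},
\end{align*}
which follows directly from \eqref{eq:Dirichlet.pdf} by recognizing the integrand of another (unnormalized) Dirichlet density. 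Writing $A \leqdef N\|\bb{\alpha}\|_1 + N\beta = \e_N^{-1}$ and using $\Gamma(z+k)/\Gamma(z) = z(z+1)\cdots(z+k-1)$, this gives, e.g., $\EE[\XX_i] = N\alpha_i/A = r_i$, $\EE[\XX_i\XX_j] = N\alpha_i(N\alpha_j + \ind_{\{i=j\}})/(A(A+1))$, and analogous rational expressions (in $A$) for the third and fourth moments. Everything reduces to bookkeeping with the rising factorials $A(A+1)\cdots$ in the denominators and $N\alpha_i(N\alpha_i+1)\cdots$ in the numerators.

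First I would establish \eqref{eq:thm:central.moments.eq.2}: from the formula above, $\EE[\XX_i\XX_j] - r_ir_j = \frac{N\alpha_i(N\alpha_j+\ind_{\{i=j\}})}{A(A+1)} - \frac{N\alpha_i N\alpha_j}{A^2}$, and a direct simplification (common denominator $A^2(A+1)$, using $N\alpha_j = A r_j$) yields $r_i(\ind_{\{i=j\}} - r_j)/(A+1) = \e_N r_i(\ind_{\{i=j\}}-r_j)/(1+\e_N)$. Next, for \eqref{eq:thm:central.moments.eq.3}, I would expand $\EE[(\XX_i-r_i)(\XX_j-r_j)(\XX_\ell-r_\ell)]$ into the eight raw-moment terms $\EE[\XX_i\XX_j\XX_\ell]$, $-r_\ell\EE[\XX_i\XX_j]$, and so on; after substituting the closed forms and clearing denominators over $A(A+1)(A+2)$ (again replacing $N\alpha_i$ by $Ar_i$ throughout), the leading algebra collapses to the stated numerator $4r_ir_jr_\ell - 2r_ir_\ell\ind_{\{i=j\}} - 2r_jr_\ell\ind_{\{i=\ell\}} - 2r_ir_j\ind_{\{j=\ell\}} + 2r_i\ind_{\{i=j=\ell\}}$ over $(1+\e_N)(1+2\e_N)$; the identity is exact, with no error term. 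Finally, for \eqref{eq:thm:central.moments.4.0} I would similarly expand $\EE[(\XX_i-r_i)^4]$ into raw moments $\EE[\XX_i^4],\dots$, each of which is $N\alpha_i(N\alpha_i+1)(N\alpha_i+2)(N\alpha_i+3)/(A(A+1)(A+2)(A+3))$ etc.; here, rather than carrying the exact rational function, I would expand in powers of $\e_N$ (equivalently, in $1/A$) and retain only the $\e_N^2$ term, absorbing the rest into $\OO_{\bb{\alpha},\beta}(N^{-3})$, which gives the $3\e_N^2 r_i^2(1-r_i)^2$ main term by matching with the known kurtosis of a $\mathrm{Beta}(N\alpha_i, A - N\alpha_i)$ marginal.

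The main obstacle is purely computational: the third- and fourth-moment expansions involve a fair number of cancellations among terms that are nominally of size $\e_N$ (in the centered third moment) before the true $\e_N^2$ order emerges, so one must be careful to keep the algebra exact in \eqref{eq:thm:central.moments.eq.2}--\eqref{eq:thm:central.moments.eq.3} and only switch to asymptotic truncation in \eqref{eq:thm:central.moments.4.0}. A convenient way to organize \eqref{eq:thm:central.moments.eq.3} is to treat the three index-coincidence patterns ($i,j,\ell$ all distinct; exactly two equal; all three equal) separately and verify the displayed numerator in each case, which also serves as a consistency check. No deep idea is required beyond the raw-moment formula and patient simplification.
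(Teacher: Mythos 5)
Your proposal is correct and follows essentially the same route as the paper: closed-form raw mixed moments of the Dirichlet distribution via ratios of Gamma functions, binomial expansion of the centered moments into raw moments, exact simplification over the common denominators for \eqref{eq:thm:central.moments.eq.2}--\eqref{eq:thm:central.moments.eq.3}, and asymptotic truncation in $\e_N$ for \eqref{eq:thm:central.moments.4.0}. The only cosmetic difference is that the paper quotes \eqref{eq:thm:central.moments.eq.2} from a reference rather than rederiving it, and organizes the third-moment computation by listing the index-coincidence cases of $\EE[\XX_i\XX_j\XX_\ell]$ in a single unified formula, exactly as you suggest doing.
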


\begin{proof}[Proof of Lemma~\ref{lem:Leblanc.2012.boundary.Lemma.1}]
    Equation~\eqref{eq:thm:central.moments.eq.2} can be found in \cite[p.39]{MR2830563}.
    Since
    \begin{align}\label{eq:non.central.two}
        \EE[\XX_i \, \XX_j]
        &=
        \begin{cases}
            \frac{\Gamma(N \alpha_i + 1) \Gamma(N \alpha_j + 1)}{\Gamma(N \alpha_i) \Gamma(N \alpha_j)} \cdot \frac{\Gamma(N \|\bb{\alpha}\|_1 + N \beta)}{\Gamma(N \|\bb{\alpha}\|_1 + N \beta + 2)}, &\mbox{if } i \neq j, \\[1mm]
            \frac{\Gamma(N \alpha_i + 2)}{\Gamma(N \alpha_i)} \cdot \frac{\Gamma(N \|\bb{\alpha}\|_1 + N \beta)}{\Gamma(N \|\bb{\alpha}\|_1 + N \beta + 2)}, &\mbox{if } i = j,
        \end{cases} \notag \\[1mm]
        &= \frac{\alpha_i (\alpha_j + \ind_{\{i = j\}} N^{-1})}{(\|\bb{\alpha}\|_1 + \beta) (\|\bb{\alpha}\|_1 + \beta + N^{-1})},
    \end{align}
    and
    \begin{align}\label{eq:non.central.three}
        \EE[\XX_i \, \XX_j \, \XX_{\ell}]
        &= \begin{cases}
                \frac{\Gamma(N \alpha_i + 1) \Gamma(N \alpha_j + 1) \Gamma(N \alpha_{\ell} + 1)}{\Gamma(N \alpha_i) \Gamma(N \alpha_j) \Gamma(N \alpha_{\ell})} \cdot \frac{\Gamma(N \|\bb{\alpha}\|_1 + N \beta)}{\Gamma(N \|\bb{\alpha}\|_1 + N \beta + 3)}, &\mbox{if } i \neq j \neq \ell \neq i, \\[1mm]
                \frac{\Gamma(N \alpha_i + 2) \Gamma(N \alpha_{\ell} + 1)}{\Gamma(N \alpha_i) \Gamma(N \alpha_{\ell})} \cdot \frac{\Gamma(N \|\bb{\alpha}\|_1 + N \beta)}{\Gamma(N \|\bb{\alpha}\|_1 + N \beta + 3)}, &\mbox{if } i = j \neq \ell, \\[1mm]
                \frac{\Gamma(N \alpha_i + 2) \Gamma(N \alpha_j + 1)}{\Gamma(N \alpha_i) \Gamma(N \alpha_j)} \cdot \frac{\Gamma(N \|\bb{\alpha}\|_1 + N \beta)}{\Gamma(N \|\bb{\alpha}\|_1 + N \beta + 3)}, &\mbox{if } i = \ell \neq j, \\[1mm]
                \frac{\Gamma(N \alpha_i + 1) \Gamma(N \alpha_j + 2)}{\Gamma(N \alpha_i) \Gamma(N \alpha_j)} \cdot \frac{\Gamma(N \|\bb{\alpha}\|_1 + N \beta)}{\Gamma(N \|\bb{\alpha}\|_1 + N \beta + 3)}, &\mbox{if } j = \ell \neq i, \\[1mm]
                \frac{\Gamma(N \alpha_i + 3)}{\Gamma(N \alpha_i)} \cdot \frac{\Gamma(N \|\bb{\alpha}\|_1 + N \beta)}{\Gamma(N \|\bb{\alpha}\|_1 + N \beta + 3)}, &\mbox{if } i = j = \ell,
            \end{cases} \notag \\[1.5mm]
        &= \frac{(\alpha_i + \ind_{\{i = \ell \neq j\}} N^{-1}) (\alpha_j + \ind_{\{i = j\}} N^{-1}) (\alpha_{\ell} + \ind_{\{j = \ell\}} N^{-1} + \ind_{\{i = j = \ell\}} N^{-1})}{(\|\bb{\alpha}\|_1 + \beta) (\|\bb{\alpha}\|_1 + \beta + N^{-1}) (\|\bb{\alpha}\|_1 + \beta + 2 N^{-1})},
    \end{align}
    we have
    \begin{align}
        \EE\big[(\XX_i - r_i)(\XX_j - r_j)(\XX_{\ell} - r_{\ell})\big]
        &= \EE[\XX_i \, \XX_j \, \XX_{\ell}] - r_{\ell} \, \EE[\XX_i \, \XX_j] - r_j \, \EE[\XX_i \, \XX_{\ell}] - r_i \, \EE[\XX_j \, \XX_{\ell}] + 2 \, r_i \, r_j \, r_{\ell} \notag \\[0.5mm]
        &= \frac{\left\{
            \begin{array}{l}
                (\alpha_i + \ind_{\{i = \ell \neq j\}} N^{-1}) (\alpha_j + \ind_{\{i = j\}} N^{-1}) (\alpha_{\ell} + (\ind_{\{j = \ell\}} + \ind_{\{i = j = \ell\}}) N^{-1}) \cdot (\|\bb{\alpha}\|_1 + \beta)^2 \\
                - \alpha_i \alpha_{\ell} (\alpha_j + \ind_{\{i = j\}} N^{-1}) \cdot (\|\bb{\alpha}\|_1 + \beta) (\|\bb{\alpha}\|_1 + \beta + 2 N^{-1}) \\
                 - \alpha_i \alpha_j (\alpha_{\ell} + \ind_{\{i = \ell\}} N^{-1}) \cdot (\|\bb{\alpha}\|_1 + \beta) (\|\bb{\alpha}\|_1 + \beta + 2 N^{-1}) \\
                 - \alpha_i \alpha_j (\alpha_{\ell} + \ind_{\{j = \ell\}} N^{-1}) \cdot (\|\bb{\alpha}\|_1 + \beta) (\|\bb{\alpha}\|_1 + \beta + 2 N^{-1}) \\
                 + 2 \alpha_i \alpha_j \alpha_{\ell} \cdot (\|\bb{\alpha}\|_1 + \beta + N^{-1}) (\|\bb{\alpha}\|_1 + \beta + 2 N^{-1})
            \end{array}
            \right\}
            }{(\|\bb{\alpha}\|_1 + \beta)^3 (\|\bb{\alpha}\|_1 + \beta + N^{-1}) (\|\bb{\alpha}\|_1 + \beta + 2 N^{-1})} \notag \\[1mm]
        &= N^{-2} \cdot \frac{\left\{
            \begin{array}{l}
                4 \alpha_i \alpha_j \alpha_{\ell} - 2 \alpha_i \alpha_{\ell} \ind_{\{i = j\}} (\|\bb{\alpha}\|_1 + \beta) - 2 \alpha_j \alpha_{\ell} \ind_{\{i = \ell\}} (\|\bb{\alpha}\|_1 + \beta) \\
                - 2 \alpha_i \alpha_j \ind_{\{j = \ell\}} (\|\bb{\alpha}\|_1 + \beta) + 2 \alpha_i \ind_{\{i = j = \ell\}} (\|\bb{\alpha}\|_1 + \beta)^2
            \end{array}
            \right\}}{(\|\bb{\alpha}\|_1 + \beta)^3 (\|\bb{\alpha}\|_1 + \beta + N^{-1}) (\|\bb{\alpha}\|_1 + \beta + 2 N^{-1})} \notag \\[2mm]
        &= N^{-2} \cdot \frac{(4 r_i r_j r_{\ell} - 2 r_i r_{\ell} \ind_{\{i = j\}} - 2 r_j r_{\ell} \ind_{\{i = \ell\}} - 2 r_i r_j \ind_{\{j = \ell\}} + 2 r_i \ind_{\{i = j = \ell\}})}{(\|\bb{\alpha}\|_1 + \beta + N^{-1}) (\|\bb{\alpha}\|_1 + \beta + 2 N^{-1})},
    \end{align}
    which proves \eqref{eq:thm:central.moments.eq.3}.
    Finally, trivial calculations show that
    \begin{align}
        \EE\big[\XX_i^4\big]
        &= \frac{\Gamma(N \alpha_i + 4)}{\Gamma(N \alpha_i)} \cdot \frac{\Gamma(N \|\bb{\alpha}\|_1 + N \beta)}{\Gamma(N \|\bb{\alpha}\|_1 + N \beta + 4)} \notag \\[1mm]
        &= \frac{N \alpha_i (N \alpha_i + 1) (N \alpha_i + 2) (N \alpha_i + 3) \cdot N^3 (\|\bb{\alpha}\|_1 + \beta)^3}{N^7 (\|\bb{\alpha}\|_1 + \beta)^7 \cdot (1 + \OO_{\bb{\alpha}\hspace{-0.2mm},\beta}(N^{-1}))}.
    \end{align}
    We deduce
    \begin{align}\label{eq:calculation.power.4}
        \EE\big[(\XX_i - r_i)^4\big]
        &= \EE[\XX_i^4] - 4 \, r_i \, \EE[\XX_i^3] + 6 \, r_i^2 \, \EE[\XX_i^2] - 3 \, r_i^4 \notag \\[0.5mm]
        &= \frac{\left\{\hspace{-1mm}
            \begin{array}{l}
                N \alpha_i (N \alpha_i + 1) (N \alpha_i + 2) (N \alpha_i + 3) \cdot N^3 (\|\bb{\alpha}\|_1 + \beta)^3 \\
                - 4 N^2 \alpha_i^2 (N \alpha_i + 1) (N \alpha_i + 2) \cdot N^2 (\|\bb{\alpha}\|_1 + \beta)^2 (N \|\bb{\alpha}\|_1 + N \beta + 3) \\
                + 6 N^3 \alpha_i^3 (N \alpha_i + 1) \cdot N (\|\bb{\alpha}\|_1 + \beta) \prod_{\ell=2}^3 (N \|\bb{\alpha}\|_1 + N \beta + \ell) \\
                - 3 N^4 \alpha_i^4 \cdot \prod_{\ell=1}^3 (N \|\bb{\alpha}\|_1 + N \beta + \ell)
            \end{array}
            \hspace{-1mm}\right\}}{N^7 (\|\bb{\alpha}\|_1 + \beta)^7 \cdot (1 + \OO_{\bb{\alpha}\hspace{-0.2mm},\beta}(N^{-1}))} \notag \\[1mm]
        &= N^{-2} \cdot \frac{3 \alpha_i^2 (\|\bb{\alpha}\|_1 - \alpha_i + \beta)^2}{(\|\bb{\alpha}\|_1 + \beta)^6} + \OO_{\bb{\alpha}\hspace{-0.2mm},\beta}(N^{-3}),
    \end{align}
    which proves \eqref{eq:thm:central.moments.4.0}.
    This ends the proof.
\end{proof}

\newpage
We can also estimate the moments of Lemma~\ref{lem:Leblanc.2012.boundary.Lemma.1} on various events.
The lemma below is used to estimate the $\asymp \e_N^{1/2}$ errors in \eqref{eq:estimate.I.begin} of the proof of Theorem~\ref{thm:total.variation}.

\begin{lemma}\label{lem:Leblanc.2012.boundary.Lemma.1.with.set.A}
    Let $(\bb{\alpha},\beta)\in (0,\infty)^{d+1}$ be given, and let $A\in \mathscr{B}(\R^d)$ be a Borel set.
    If $\bb{\XX} = (\XX_1,\XX_2,\dots,\XX_d)\sim \mathrm{Dirichlet}\hspace{0.2mm}(N \bb{\alpha}, N \beta)$ according to \eqref{eq:Dirichlet.pdf}, then, for all $i,j,\ell\in \{1,2,\dots,d\}$ and $N$ large enough,
    \begin{align}
        &\left|\EE\big[(\XX_i - r_i) \ind_{\{\bb{\XX}\in A\}}\big]\right| \leq \e_N^{1/2} \big(\PP(\bb{\XX}\in A^c)\big)^{1/2}, \label{eq:thm:central.moments.eq.1.set.A} \\[1mm]
        &\left|\EE\big[(\XX_i - r_i)(\XX_j - r_j)(\XX_{\ell} - r_{\ell}) \ind_{\{\bb{\XX}\in A\}}\big]  - \e_N^2 \cdot \frac{(4 r_i r_j r_{\ell} - 2 r_i r_{\ell} \ind_{\{i = j\}} - 2 r_j r_{\ell} \ind_{\{i = \ell\}} - 2 r_i r_j \ind_{\{j = \ell\}} + 2 r_i \ind_{\{i = j = \ell\}})}{(1 + \e_N) (1 + 2 \e_N)}\right| \leq \e_N^{3/2} \big(\PP(\bb{\XX}\in A^c)\big)^{1/4}, \label{eq:thm:central.moments.eq.3.set.A}
    \end{align}
    where recall $\e_N \leqdef 1 / (N \|\bb{\alpha}\|_1 + N \beta)$ and $r_i \leqdef \EE[\XX_i] = \alpha_i / (\|\bb{\alpha}\|_1 + \beta)$ for all $i\in \{1,2,\dots,d\}$.
\end{lemma}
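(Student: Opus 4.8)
The plan is to prove both inequalities by splitting the expectation over $A$ as the full expectation minus the expectation over $A^c$, controlling the latter with a Cauchy--Schwarz (or H\"older) step and the closed-form central moments from Lemma~\ref{lem:Leblanc.2012.boundary.Lemma.1}. For \eqref{eq:thm:central.moments.eq.1.set.A}, since $\EE[\XX_i - r_i] = 0$ we have $\EE[(\XX_i - r_i)\ind_{\{\bb{\XX}\in A\}}] = -\EE[(\XX_i - r_i)\ind_{\{\bb{\XX}\in A^c\}}]$, and Cauchy--Schwarz gives
\begin{align}
    \big|\EE[(\XX_i - r_i)\ind_{\{\bb{\XX}\in A^c\}}]\big| \leq \big(\EE[(\XX_i - r_i)^2]\big)^{1/2} \big(\PP(\bb{\XX}\in A^c)\big)^{1/2}.
\end{align}
By \eqref{eq:thm:central.moments.eq.2} with $i = j$, $\EE[(\XX_i - r_i)^2] = \e_N r_i (1 - r_i)/(1 + \e_N) \leq \e_N$ (using $0 \leq r_i(1-r_i) \leq 1$ and $1 + \e_N \geq 1$), which immediately yields the claimed bound $\e_N^{1/2}(\PP(\bb{\XX}\in A^c))^{1/2}$.

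For \eqref{eq:thm:central.moments.eq.3.set.A}, I would again write the difference inside the absolute value as $-\EE[(\XX_i - r_i)(\XX_j - r_j)(\XX_\ell - r_\ell)\ind_{\{\bb{\XX}\in A^c\}}]$, using the exact identity \eqref{eq:thm:central.moments.eq.3} for the full third central moment. Then H\"older's inequality with exponents $(4,4,4,4/3)$ — or more simply Cauchy--Schwarz followed by a bound on the fourth moments — gives
\begin{align}
    \big|\EE[(\XX_i - r_i)(\XX_j - r_j)(\XX_\ell - r_\ell)\ind_{\{\bb{\XX}\in A^c\}}]\big| \leq \big(\EE[(\XX_i - r_i)^4]\big)^{1/4}\big(\EE[(\XX_j - r_j)^4]\big)^{1/4}\big(\EE[(\XX_\ell - r_\ell)^4]\big)^{1/4}\big(\PP(\bb{\XX}\in A^c)\big)^{1/4}.
\end{align}
By \eqref{eq:thm:central.moments.4.0}, each $\EE[(\XX_k - r_k)^4] = 3\e_N^2 r_k^2(1-r_k)^2 + \OO_{\bb{\alpha},\beta}(N^{-3}) \leq C\e_N^2$ for $N$ large enough (absorbing the lower-order term, since $r_k^2(1-r_k)^2 \leq 1/16$), so the product of the three fourth-moment factors is $\OO(\e_N^{3/2})$, giving a bound of the form $C\e_N^{3/2}(\PP(\bb{\XX}\in A^c))^{1/4}$.

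The only genuine subtlety — and the place where the ``$N$ large enough'' hypothesis is really needed — is pinning down the constant in \eqref{eq:thm:central.moments.eq.3.set.A} to be exactly $1$ rather than some $C = C_{\bb{\alpha},\beta}$. To get the clean constant $1$ one should be a little more careful: bound $\EE[(\XX_k - r_k)^4]$ by $\e_N^2$ (not $C\e_N^2$) for $N$ large, which is legitimate because $3 r_k^2(1-r_k)^2 \leq 3/16 < 1$ and the $\OO(N^{-3})$ remainder is eventually smaller than $(1 - 3/16)\e_N^2 \asymp N^{-2}$; then the product of the three quarter-powers is at most $\e_N^{3/2}$ exactly. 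I expect this constant-chasing to be the main (minor) obstacle; everything else is a direct application of Cauchy--Schwarz/H\"older together with the already-established moment formulas, and no new ideas are required.
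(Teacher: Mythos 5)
Your proposal is correct and follows essentially the same route as the paper: centering via $\EE[\XX_i-r_i]=0$ and the exact identity \eqref{eq:thm:central.moments.eq.3}, reducing to the expectation over $A^c$, then Cauchy--Schwarz with \eqref{eq:thm:central.moments.eq.2} for the first bound and H\"older with \eqref{eq:thm:central.moments.4.0} for the second. Your explicit constant-chasing (using $3r_k^2(1-r_k)^2\leq 3/16<1$ to absorb the $\OO_{\bb{\alpha},\beta}(N^{-3})$ remainder for $N$ large) is exactly the step the paper leaves implicit in its ``for $N$ large enough'' qualifier.
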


\begin{proof}[Proof of Lemma~\ref{lem:Leblanc.2012.boundary.Lemma.1.with.set.A}]
    For the bound in \eqref{eq:thm:central.moments.eq.1.set.A}, note that $\EE[\XX_i - r_i] = 0$.
    By Cauchy-Schwarz and a bound on the second moment of the beta distribution (see, e.g., \eqref{eq:thm:central.moments.eq.2}), we have
    \begin{align}\label{eq:thm:central.moments.eq.1.set.A.proof}
        \left|\EE\big[(\XX_i - r_i) \ind_{\{\bb{\XX}\in A\}}\big]\right|
        = \left|\EE\big[(\XX_i - r_i) \ind_{\{\bb{\XX}\in A^c\}}\big]\right|
        \leq \left(\EE\big[(\XX_i - r_i)^2\big]\right)^{1/2} \big(\PP(\bb{\XX}\in A^c)\big)^{1/2}
        \leq \e_N^{1/2} \big(\PP(\bb{\XX}\in A^c)\big)^{1/2}.
    \end{align}
    For the bound in \eqref{eq:thm:central.moments.eq.3.set.A}, Equation \eqref{eq:thm:central.moments.eq.3}, H\"older's inequality and a bound on the fourth central moment of the beta distribution (see, e.g., \eqref{eq:thm:central.moments.4.0}) yield, for $N$ large enough,
    \begin{align}\label{eq:thm:central.moments.eq.3.set.A.proof}
        &\left|\EE\big[(\XX_i - r_i)(\XX_j - r_j)(\XX_{\ell} - r_{\ell}) \, \ind_{\{\bb{\XX}\in A\}}\big] - \e_N^2 \cdot \frac{(4 r_i r_j r_{\ell} - 2 r_i r_{\ell} \ind_{\{i = j\}} - 2 r_j r_{\ell} \ind_{\{i = \ell\}} - 2 r_i r_j \ind_{\{j = \ell\}} + 2 r_i \ind_{\{i = j = \ell\}})}{(1 + \e_N) (1 + 2 \e_N)}\right| \notag \\
        &\qquad= \Big|\EE\big[(\XX_i - r_i)(\XX_j - r_j)(\XX_{\ell} - r_{\ell}) \, \ind_{\{\bb{\XX}\in A^c\}}\big]\Big| \notag \\
        &\qquad\leq \big(\EE\big[(\XX_i - r_i)^4\big]\big)^{1/4} \big(\EE\big[(\XX_j - r_j)^4\big]\big)^{1/4} \big(\EE\big[(\XX_{\ell} - r_{\ell})^4\big]\big)^{1/4} \big(\PP(\bb{\XX}\in A^c)\big)^{1/4} \notag \\[1mm]
        &\qquad\leq \e_N^{1/2} \e_N^{1/2} \e_N^{1/2} \big(\PP(\bb{\XX}\in A^c)\big)^{1/4}.
    \end{align}
    This ends the proof.
\end{proof}

\vspace{-8mm}
\section{Simulations}\label{sec:simulations}

    \vspace{-1mm}
    In this appendix, we provide some numerical evidence (displayed graphically) for the validity of the expansion in Theorem~\ref{thm:p.k.expansion}.
    We compare three levels of approximation for various choices of $\bb{\alpha}$ and $\beta$.
    For any given $(\bb{\alpha},\beta)\in (0,\infty)^{d+1}$, define
    \begin{align}
        E_0
        &\leqdef \sup_{\bb{x}\in \R^d : \|\bb{x} - \bb{r}\|_{\infty} \leq \e_N^{1/2}} \left|\log\left(\frac{K_{N\hspace{-0.2mm},\bb{\alpha}\hspace{-0.2mm},\beta}(\bb{x})}{(1 + \e_N^{-1})^{d/2} \phi_{\Sigma_{\bb{r}}}(\bb{\delta}_{\bb{x}})}\right)\right|, \label{eq:E.0} \\[0.5mm]
        E_1
        &\leqdef \sup_{\bb{x}\in \R^d : \|\bb{x} - \bb{r}\|_{\infty} \leq \e_N^{1/2}} \left|\log\left(\frac{K_{N\hspace{-0.2mm},\bb{\alpha}\hspace{-0.2mm},\beta}(\bb{x})}{(1 + \e_N^{-1})^{d/2} \phi_{\Sigma_{\bb{r}}}(\bb{\delta}_{\bb{x}})}\right) - \e_N^{1/2} \cdot \left\{- \sum_{i = 1}^{d+1} \bigg(\frac{\delta_{i,x_i}}{r_i}\bigg) + \frac{1}{3} \sum_{i = 1}^{d+1} \delta_{i,x_i} \bigg(\frac{\delta_{i,x_i}}{r_i}\bigg)^2\right\}\right|, \label{eq:E.1} \\[1mm]
        E_2
        &\leqdef \sup_{\bb{x}\in \R^d : \|\bb{x} - \bb{r}\|_{\infty} \leq \e_N^{1/2}} \left|\log\left(\frac{K_{N\hspace{-0.2mm},\bb{\alpha}\hspace{-0.2mm},\beta}(\bb{x})}{(1 + \e_N^{-1})^{d/2} \phi_{\Sigma_{\bb{r}}}(\bb{\delta}_{\bb{x}})}\right) - \e_N^{1/2} \cdot \left\{- \sum_{i = 1}^{d+1} \bigg(\frac{\delta_{i,x_i}}{r_i}\bigg) + \frac{1}{3} \sum_{i = 1}^{d+1} \delta_{i,x_i} \bigg(\frac{\delta_{i,x_i}}{r_i}\bigg)^2\right\}\right. \notag \\
        &\quad\left.\hspace{25mm}- \e_N \cdot \left\{\frac{1}{2} \sum_{i=1}^{d+1} (1 + r_i) \bigg(\frac{\delta_{i,x_i}}{r_i}\bigg)^2 - \frac{1}{4} \sum_{i=1}^{d+1} \delta_{i,x_i} \bigg(\frac{\delta_{i,x_i}}{r_i}\bigg)^3 - \frac{d}{2} + \frac{1}{12} \Big\{1 - \sum_{i=1}^{d+1} r_i^{-1}\Big\}\right\}\right|. \label{eq:E.2}
    \end{align}
    Note that $\|\bb{x} - \bb{r}\|_{\infty} \leq \e_N^{1/2}$ implies $\|\bb{\delta}_{\bb{x}}\|_{\infty} \leq (1 + \e_N)^{1/2} \approx 1$, so we expect from Theorem~\ref{thm:p.k.expansion} that the errors above ($E_0$, $E_1$ and $E_2$) will have the asymptotic behavior
    \vspace{-1mm}
    \begin{align}
        E_i = \OO_{d,\bb{\alpha},\beta}(\e_N^{(1 + i)/2}), \quad \text{for all } i\in \{0,1,2\},
    \end{align}
    or equivalently,
    \begin{align}\label{eq:liminf.exponent.bound}
        \liminf_{N\to \infty} \frac{\log E_i}{\log \e_N} \geq \frac{1 + i}{2}, \quad \text{for all } i\in \{0,1,2\}.
    \end{align}

    \vspace{2mm}
    \noindent
    The property \eqref{eq:liminf.exponent.bound} is illustrated in Figures~\ref{fig:error.exponents.plots.beta.1},~\ref{fig:error.exponents.plots.beta.2}~and~\ref{fig:error.exponents.plots.beta.3} below, for various choices of $\bb{\alpha}$ and $\beta$.
    Similarly, the corresponding the log-log plots of the errors as a function of $N$ are displayed in Figures~\ref{fig:loglog.errors.plots.beta.1},~\ref{fig:loglog.errors.plots.beta.2}~and~\ref{fig:loglog.errors.plots.beta.3}.
    The simulations are limited to $N \leq 10^5$ because numerical errors start to perturb the results near that point, but the evidence remains overwhelming.

    \phantom{vertical spacing}
    \begin{figure}[H]
        \captionsetup[subfigure]{labelformat=empty}
        \captionsetup{width=0.8\linewidth}
        \vspace{-0.5cm}
        \centering
        \begin{subfigure}[b]{0.22\textwidth}
            \centering
            \includegraphics[width=\textwidth, height=0.85\textwidth]{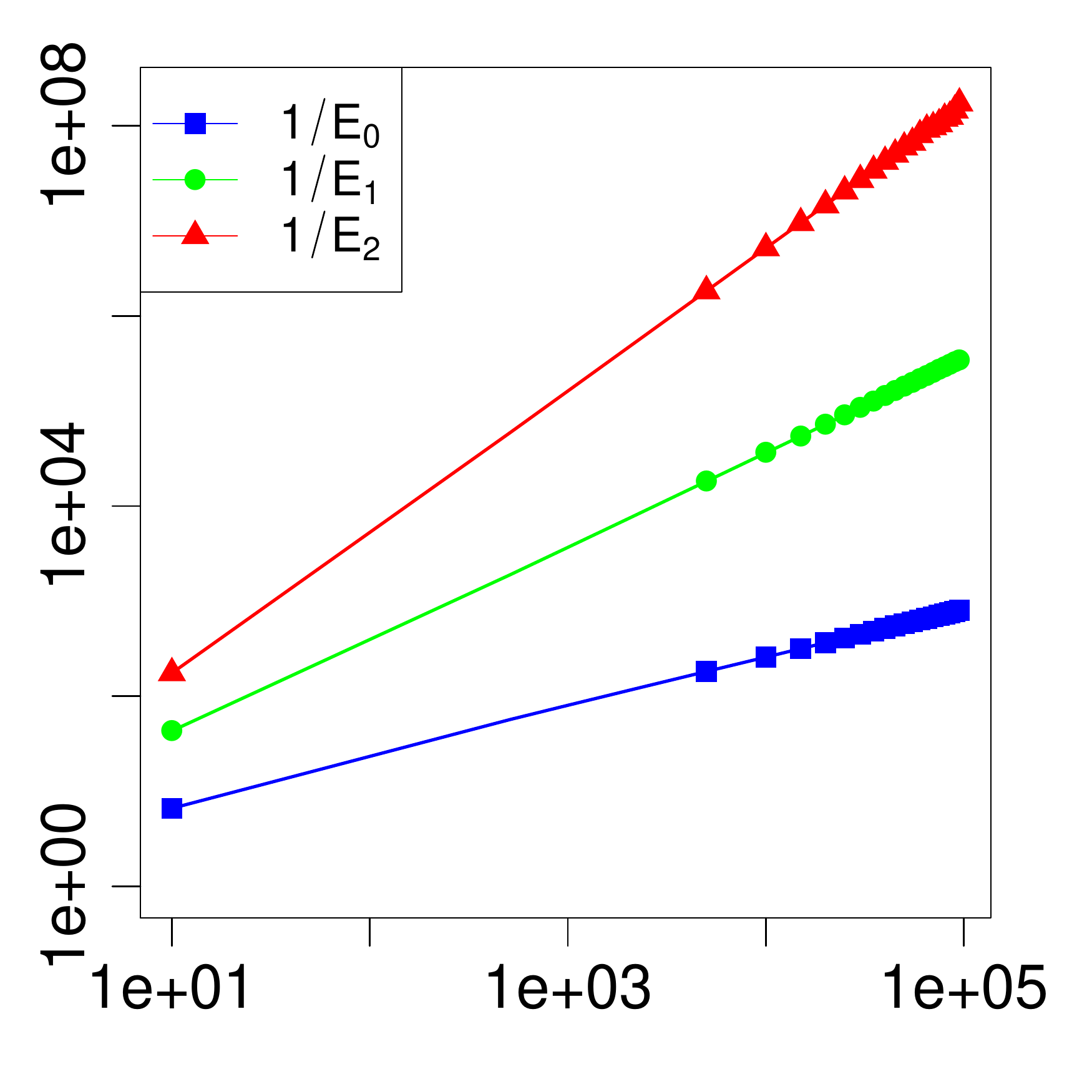}
            \vspace{-0.9cm}
            \caption{$\bb{\alpha} = (1,1)$ and $\beta = 1$}
        \end{subfigure}
        \quad
        \begin{subfigure}[b]{0.22\textwidth}
            \centering
            \includegraphics[width=\textwidth, height=0.85\textwidth]{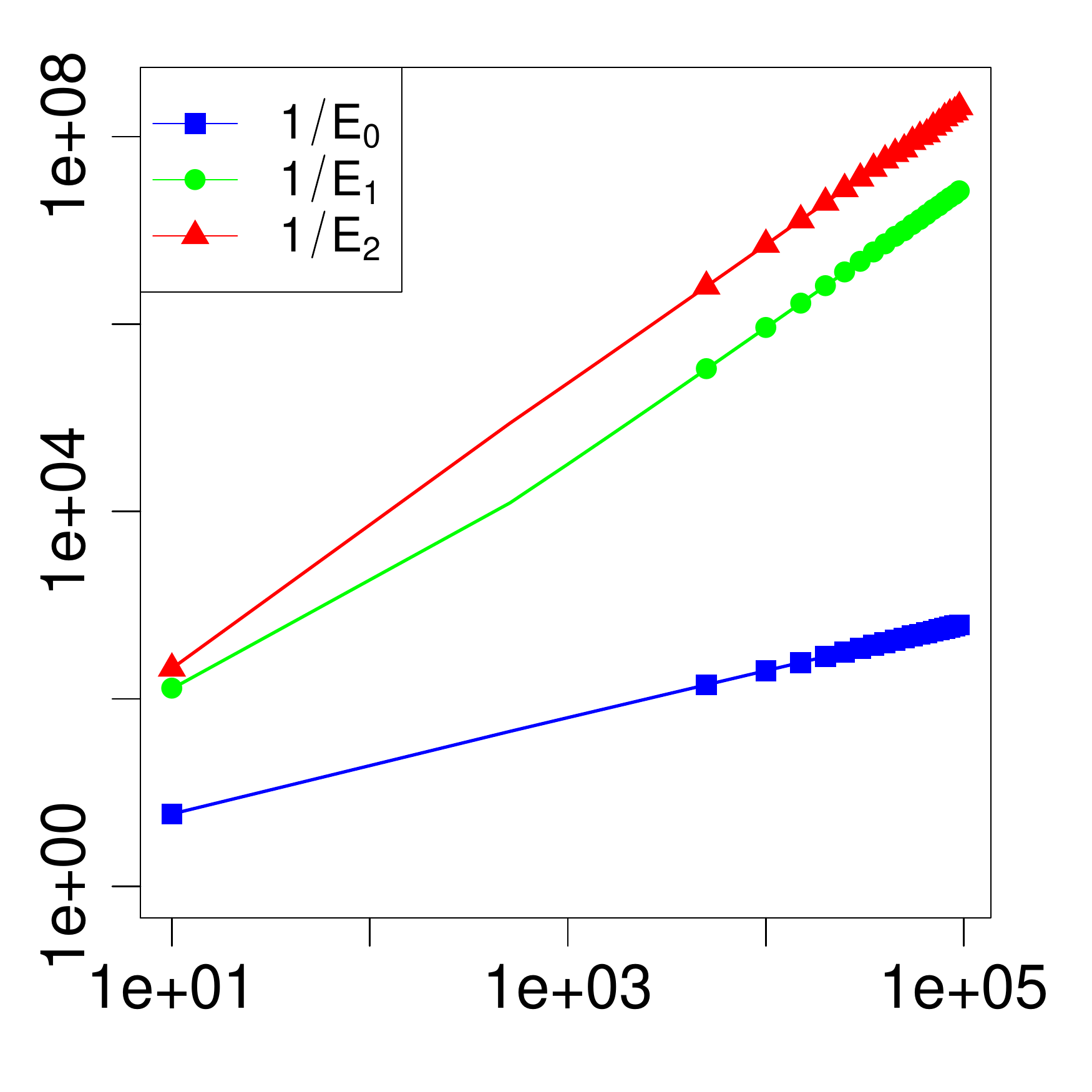}
            \vspace{-0.9cm}
            \caption{$\bb{\alpha} = (1,2)$ and $\beta = 1$}
        \end{subfigure}
        \quad
        \begin{subfigure}[b]{0.22\textwidth}
            \centering
            \includegraphics[width=\textwidth, height=0.85\textwidth]{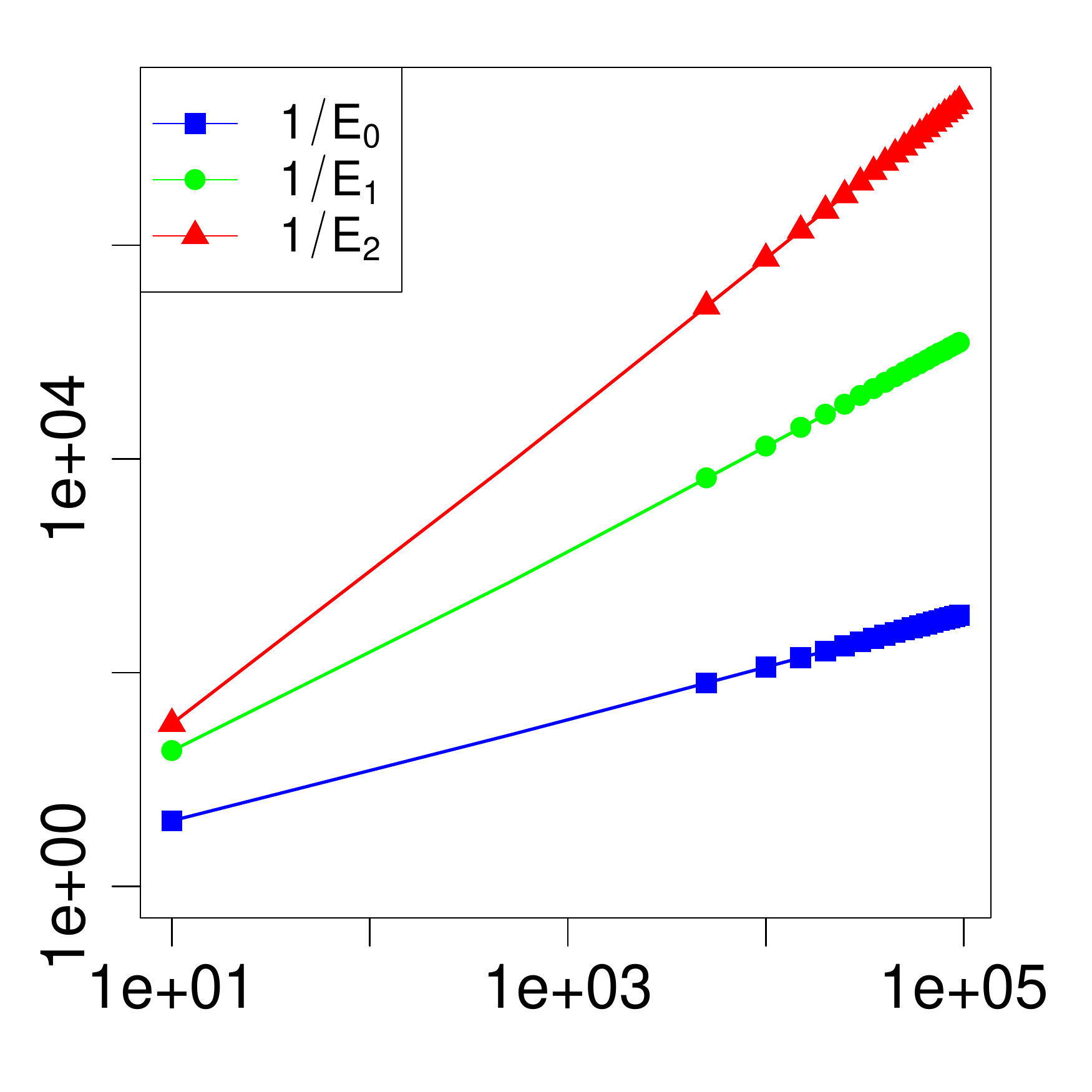}
            \vspace{-0.9cm}
            \caption{$\bb{\alpha} = (1,3)$ and $\beta = 1$}
        \end{subfigure}
        \quad
        \begin{subfigure}[b]{0.22\textwidth}
            \centering
            \includegraphics[width=\textwidth, height=0.85\textwidth]{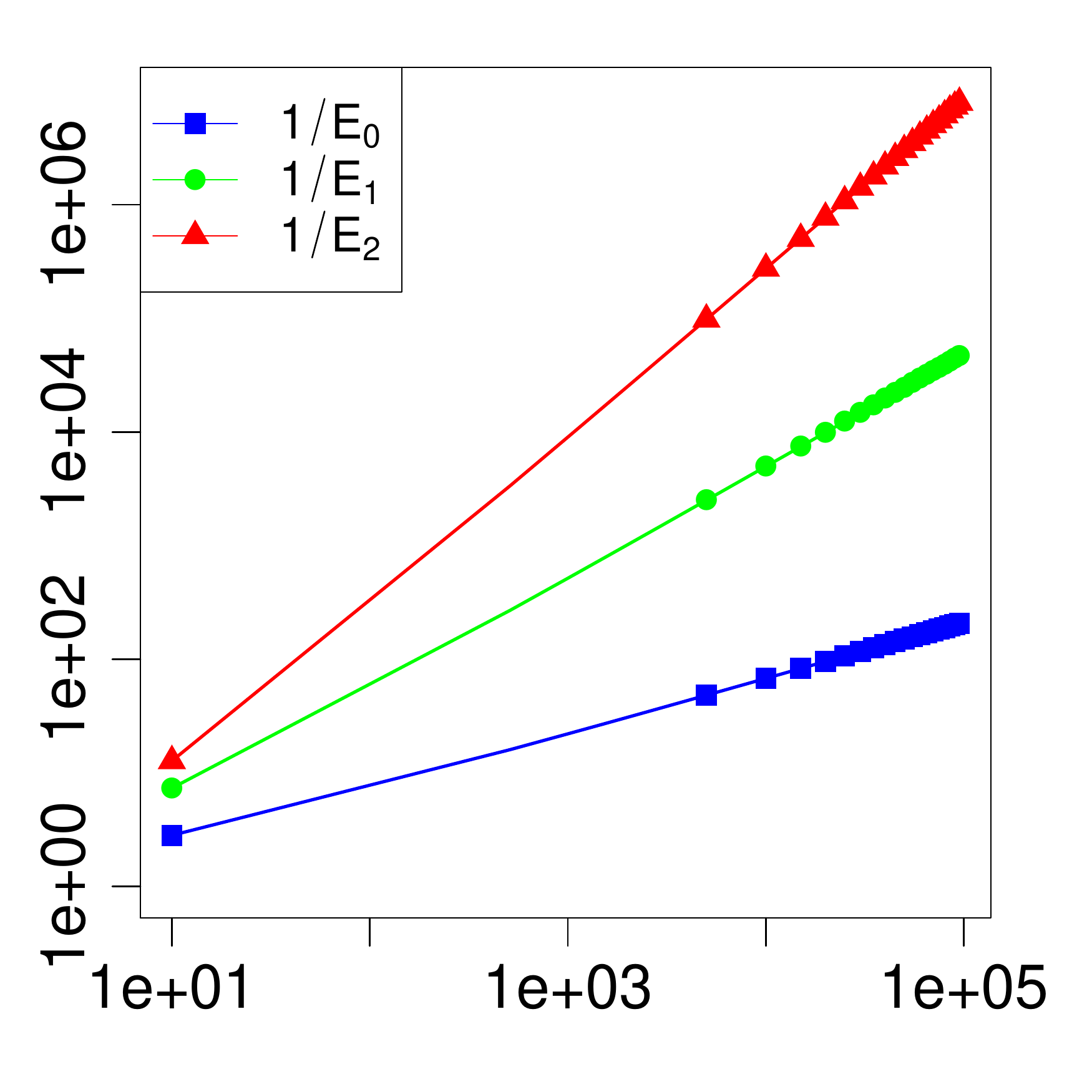}
            \vspace{-0.9cm}
            \caption{$\bb{\alpha} = (1,4)$ and $\beta = 1$}
        \end{subfigure}
        \begin{subfigure}[b]{0.22\textwidth}
            \centering
            \includegraphics[width=\textwidth, height=0.85\textwidth]{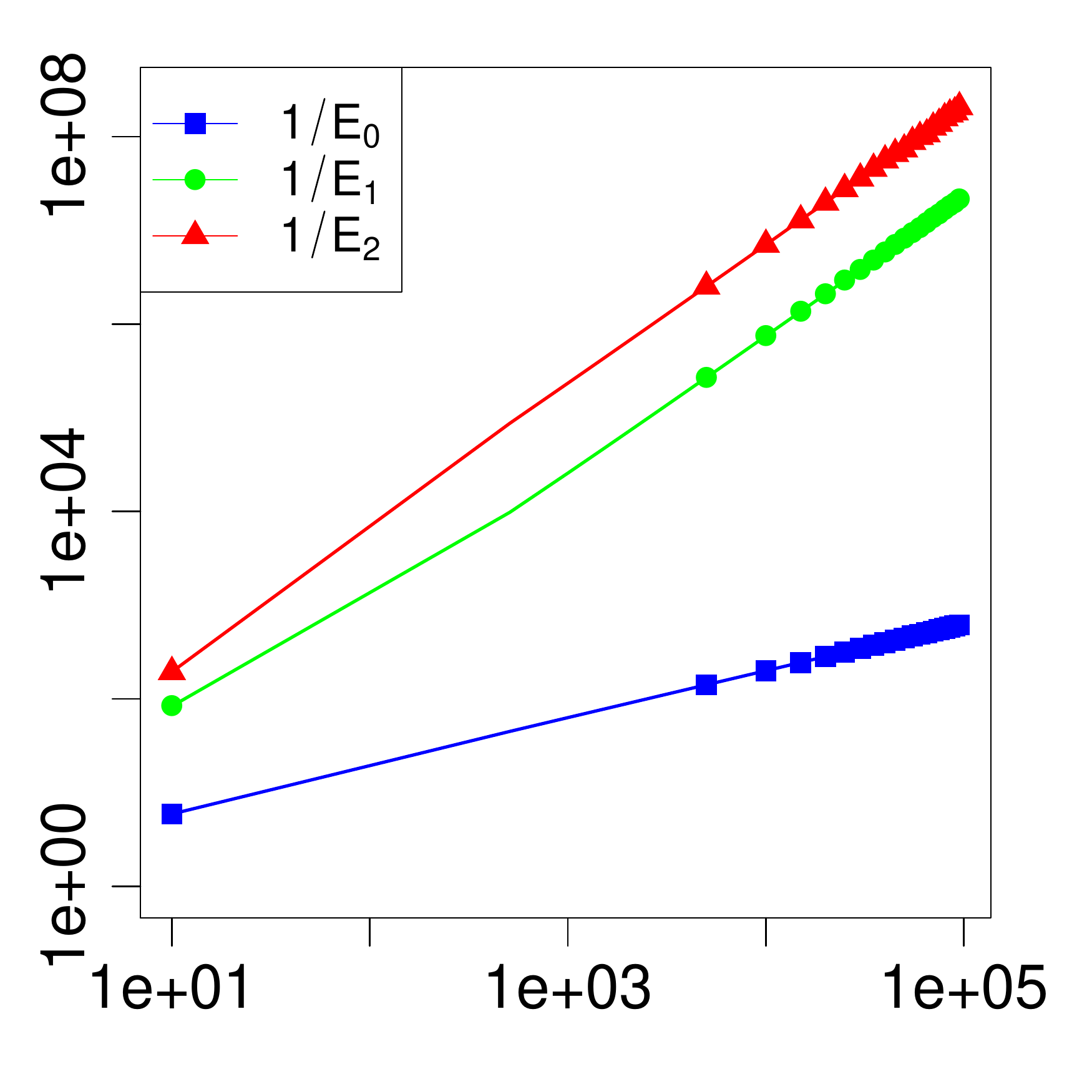}
            \vspace{-0.9cm}
            \caption{$\bb{\alpha} = (2,1)$ and $\beta = 1$}
        \end{subfigure}
        \quad
        \begin{subfigure}[b]{0.22\textwidth}
            \centering
            \includegraphics[width=\textwidth, height=0.85\textwidth]{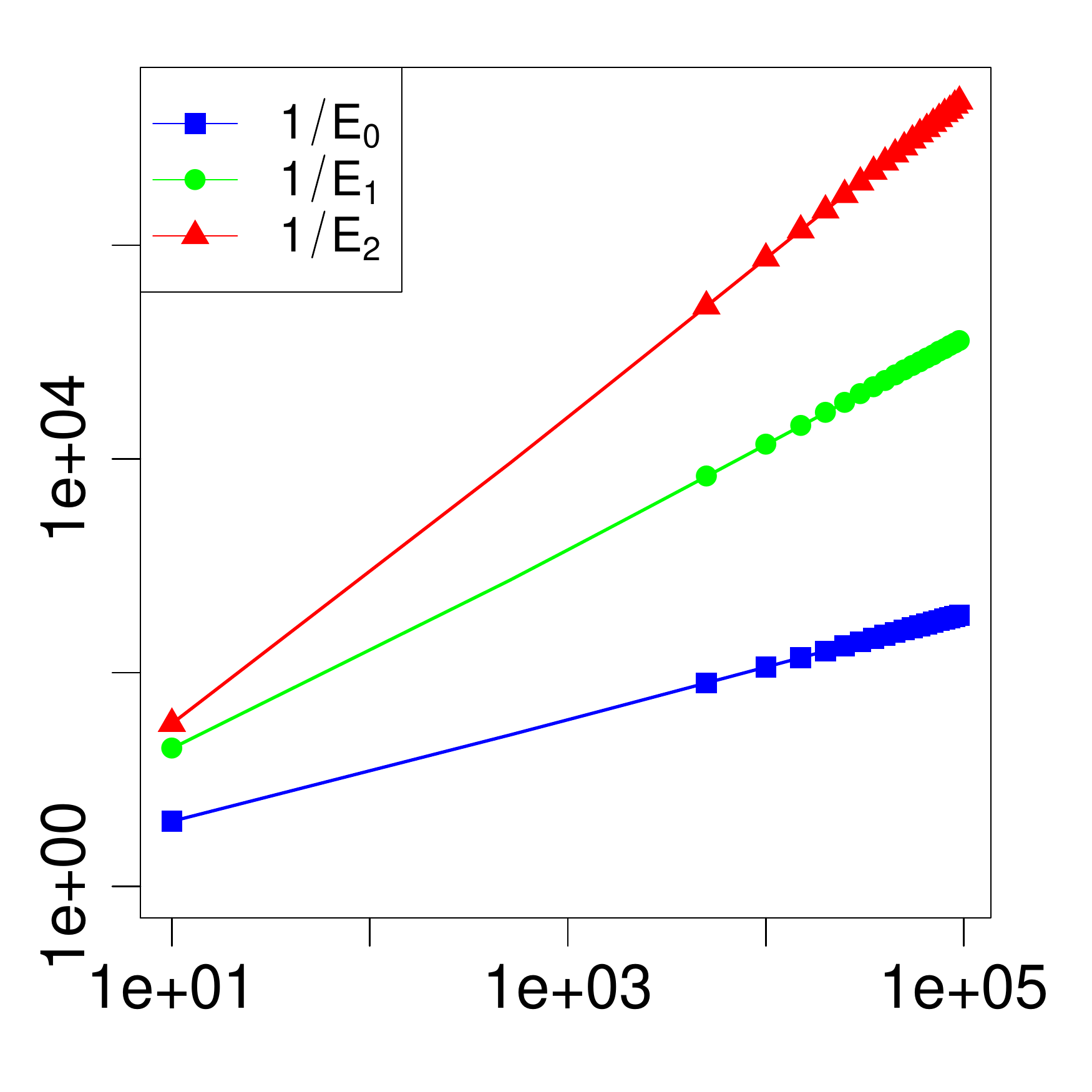}
            \vspace{-0.9cm}
            \caption{$\bb{\alpha} = (2,2)$ and $\beta = 1$}
        \end{subfigure}
        \quad
        \begin{subfigure}[b]{0.22\textwidth}
            \centering
            \includegraphics[width=\textwidth, height=0.85\textwidth]{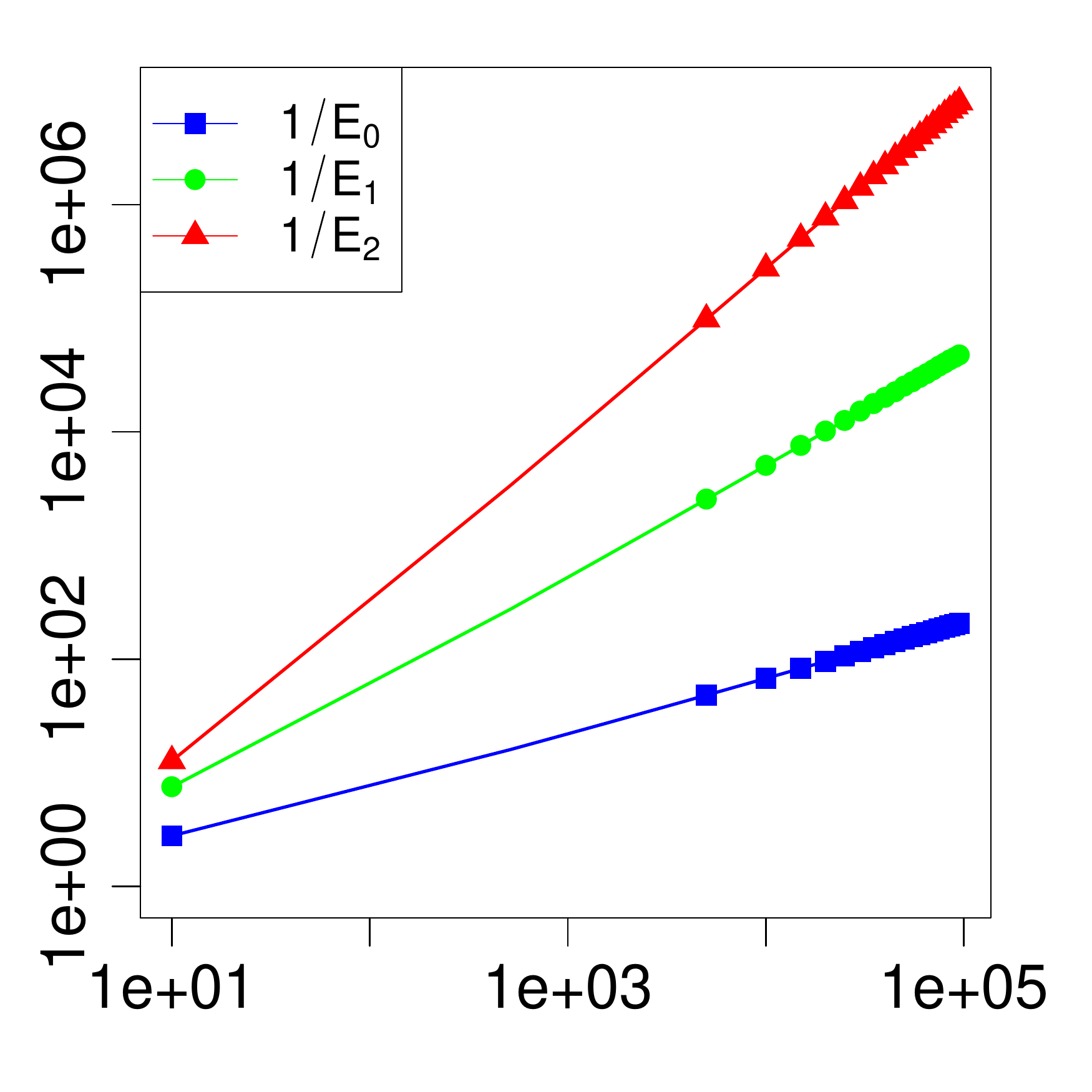}
            \vspace{-0.9cm}
            \caption{$\bb{\alpha} = (2,3)$ and $\beta = 1$}
        \end{subfigure}
        \quad
        \begin{subfigure}[b]{0.22\textwidth}
            \centering
            \includegraphics[width=\textwidth, height=0.85\textwidth]{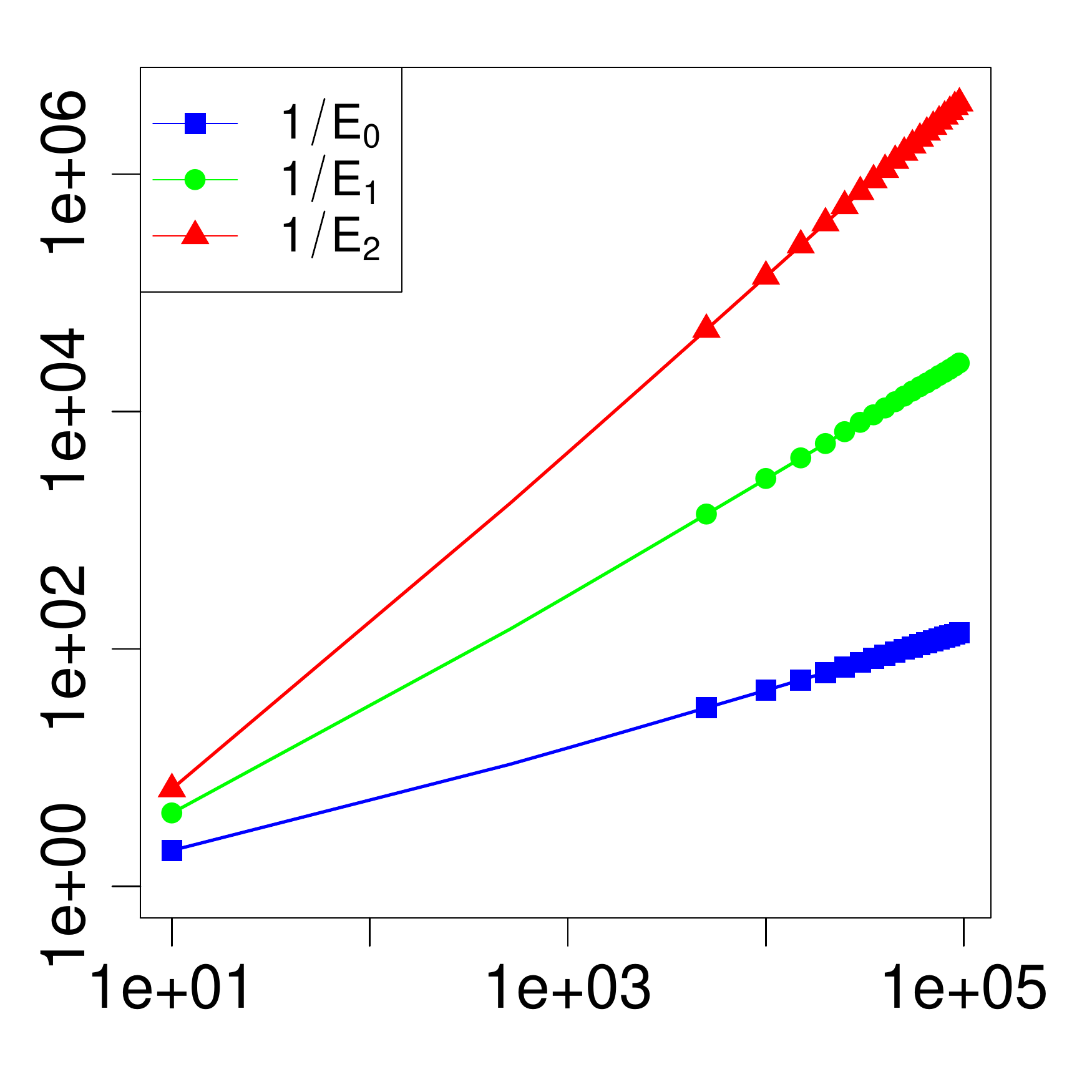}
            \vspace{-0.9cm}
            \caption{$\bb{\alpha} = (2,4)$ and $\beta = 1$}
        \end{subfigure}
        \begin{subfigure}[b]{0.22\textwidth}
            \centering
            \includegraphics[width=\textwidth, height=0.85\textwidth]{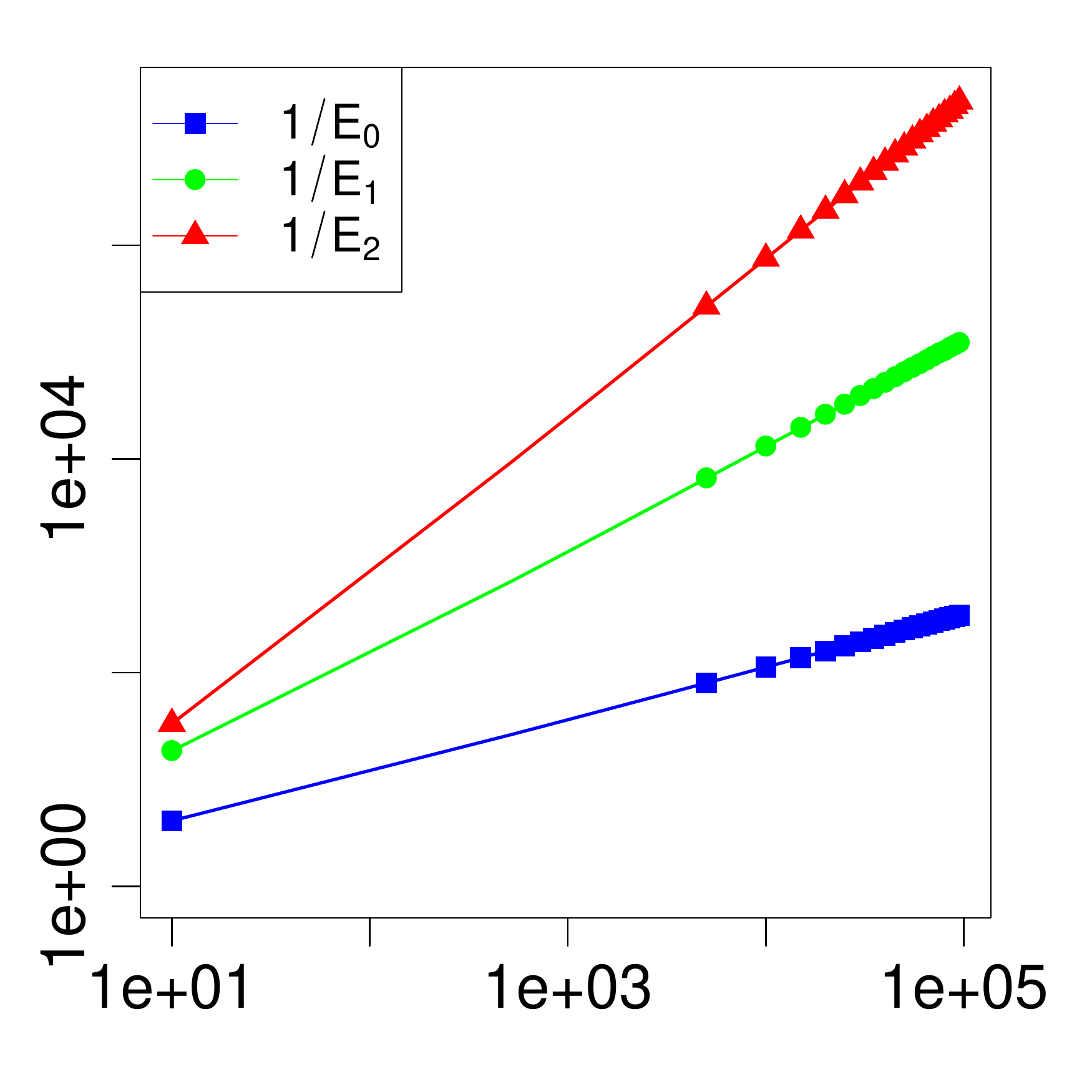}
            \vspace{-0.9cm}
            \caption{$\bb{\alpha} = (3,1)$ and $\beta = 1$}
        \end{subfigure}
        \quad
        \begin{subfigure}[b]{0.22\textwidth}
            \centering
            \includegraphics[width=\textwidth, height=0.85\textwidth]{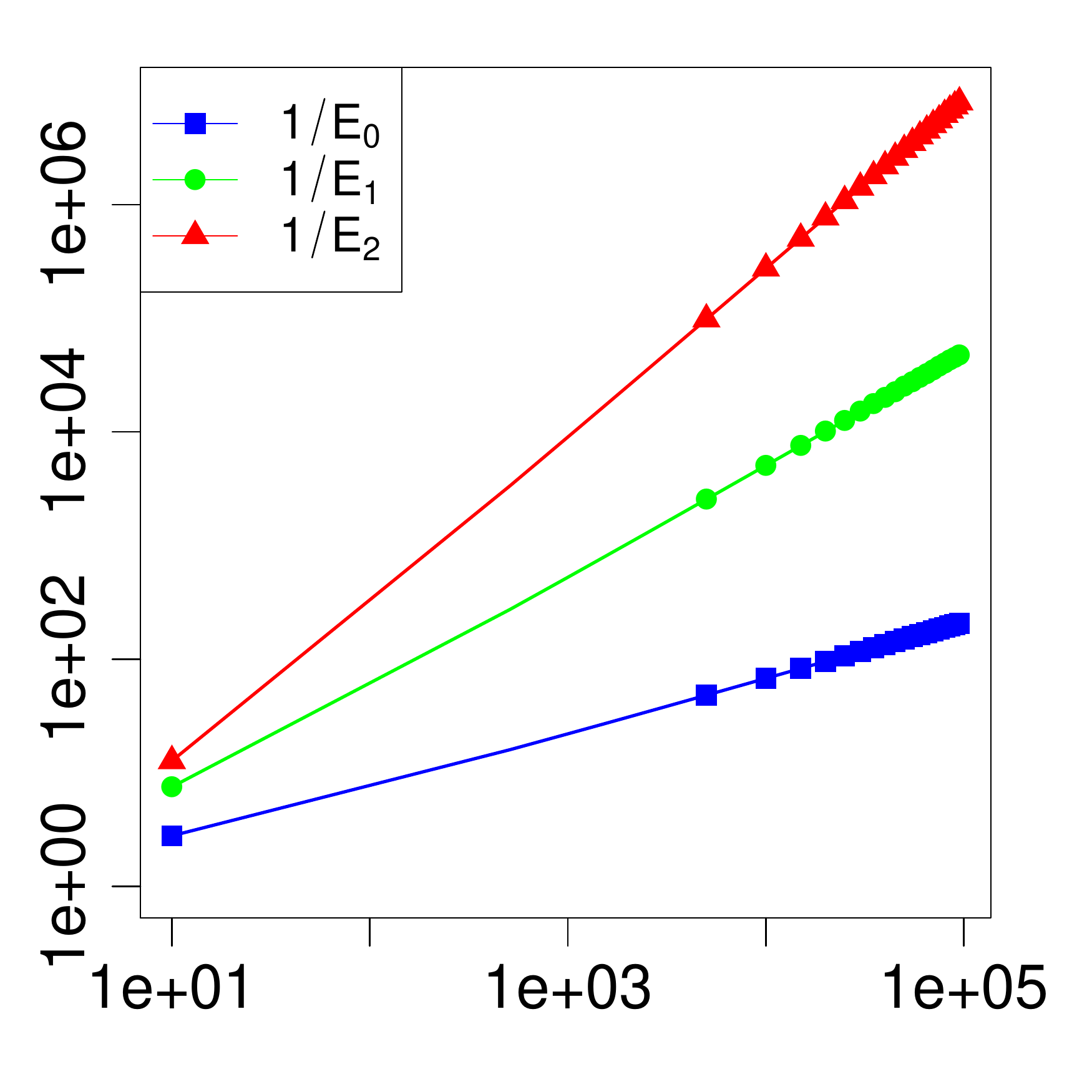}
            \vspace{-0.9cm}
            \caption{$\bb{\alpha} = (3,2)$ and $\beta = 1$}
        \end{subfigure}
        \quad
        \begin{subfigure}[b]{0.22\textwidth}
            \centering
            \includegraphics[width=\textwidth, height=0.85\textwidth]{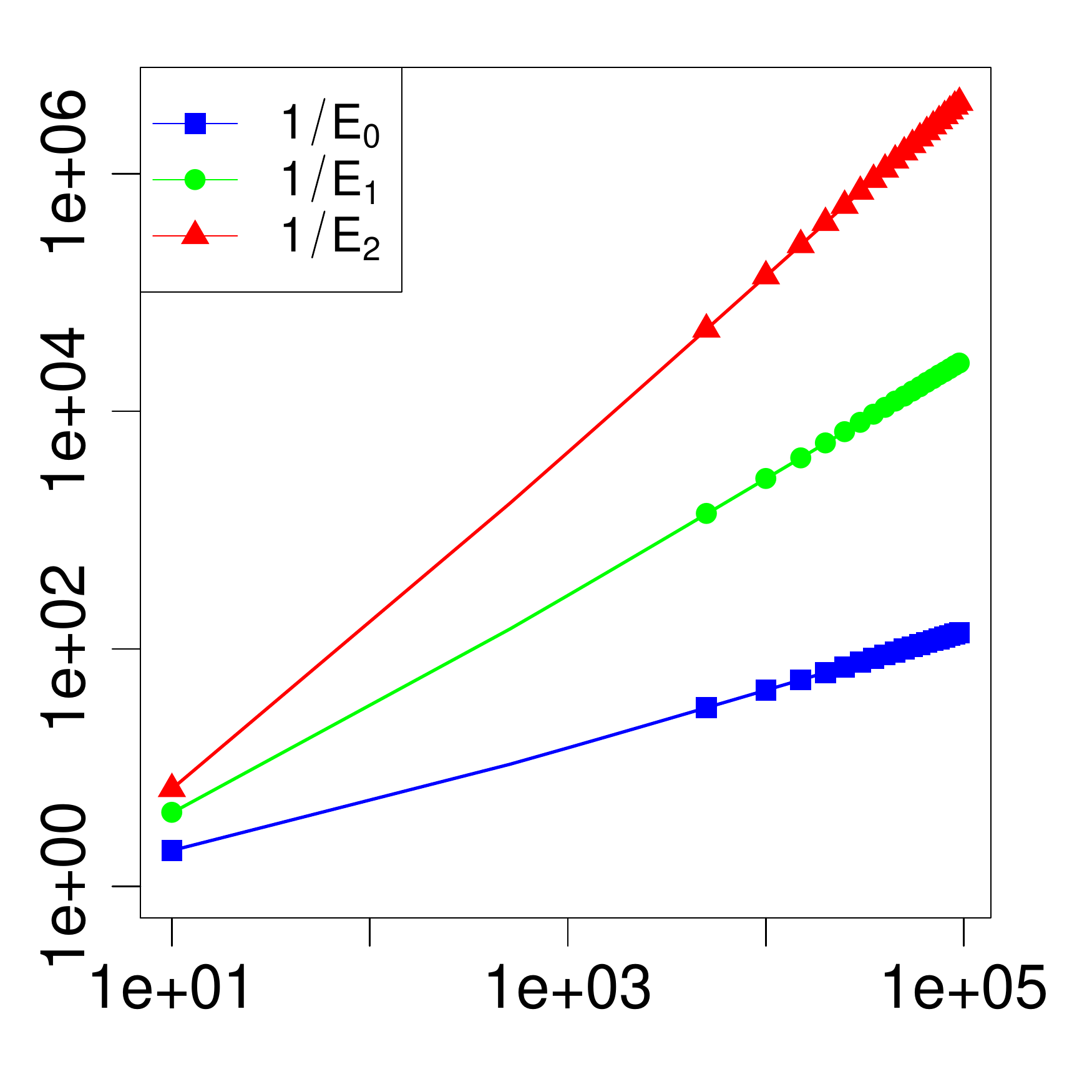}
            \vspace{-0.9cm}
            \caption{$\bb{\alpha} = (3,3)$ and $\beta = 1$}
        \end{subfigure}
        \quad
        \begin{subfigure}[b]{0.22\textwidth}
            \centering
            \includegraphics[width=\textwidth, height=0.85\textwidth]{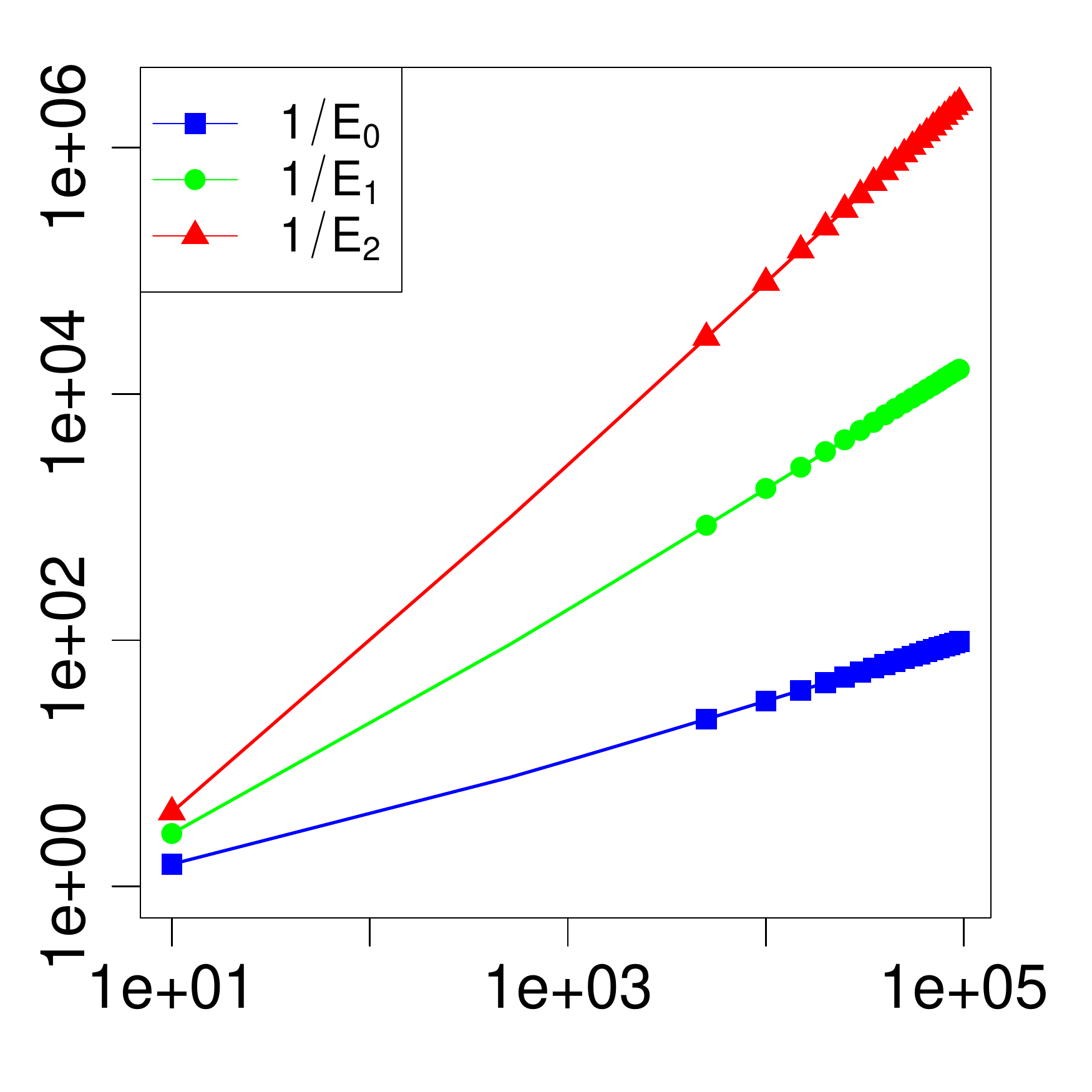}
            \vspace{-0.9cm}
            \caption{$\bb{\alpha} = (3,4)$ and $\beta = 1$}
        \end{subfigure}
        \caption{Plots of $1 / E_i$ as a function of $N$, for various choices of $\bb{\alpha}$, when $\beta = 1$. Both the horizontal and vertical axes are on a logarithmic scale. The plots clearly illustrate how the addition of correction terms from Theorem~\ref{thm:p.k.expansion} to the base approximation \eqref{eq:E.0} improves it.}
        \label{fig:loglog.errors.plots.beta.1}
    \end{figure}
    \begin{figure}[H]
        \captionsetup[subfigure]{labelformat=empty}
        \captionsetup{width=0.8\linewidth}
        \vspace{-0.5cm}
        \centering
        \begin{subfigure}[b]{0.22\textwidth}
            \centering
            \includegraphics[width=\textwidth, height=0.85\textwidth]{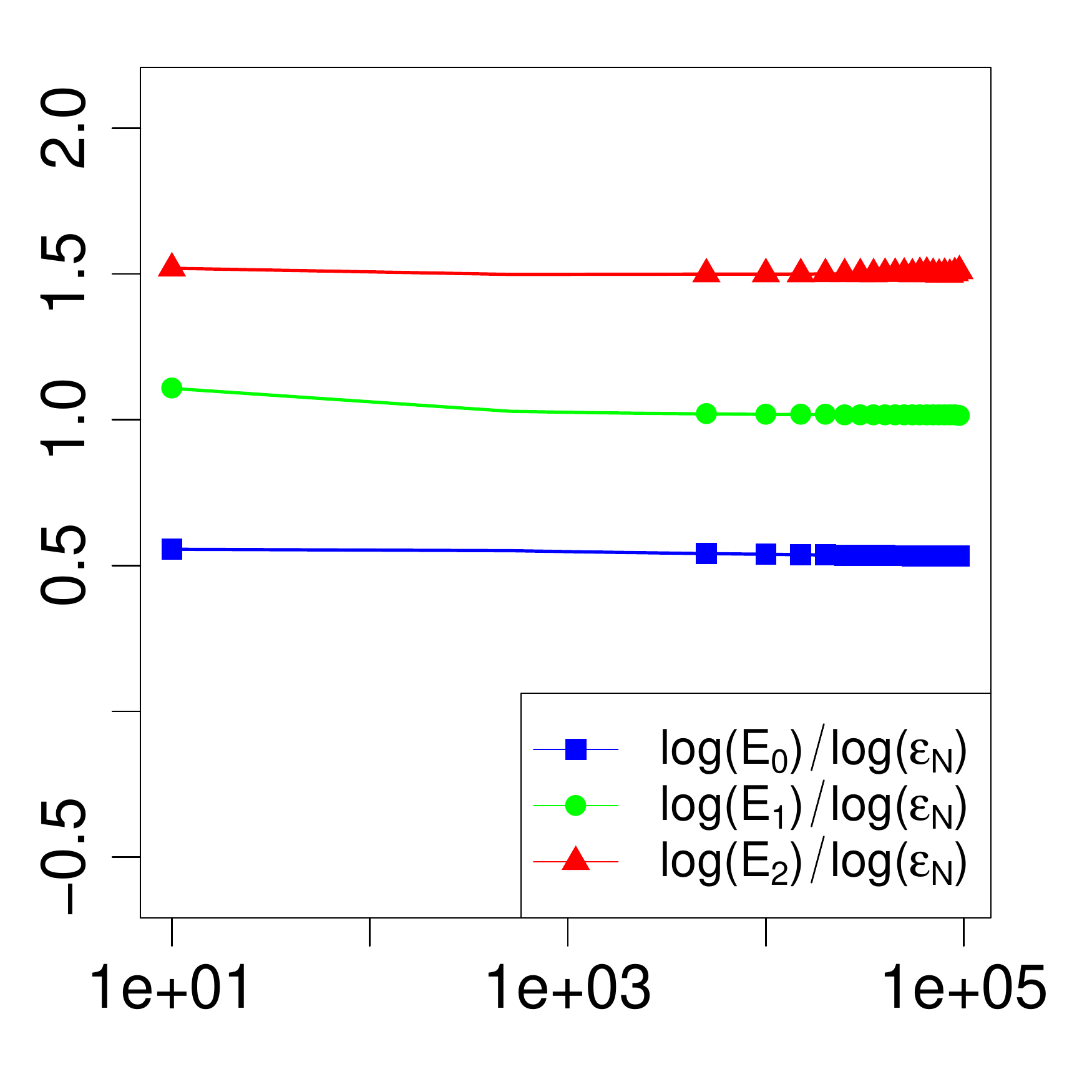}
            \vspace{-0.9cm}
            \caption{$\bb{\alpha} = (1,1)$ and $\beta = 1$}
        \end{subfigure}
        \quad
        \begin{subfigure}[b]{0.22\textwidth}
            \centering
            \includegraphics[width=\textwidth, height=0.85\textwidth]{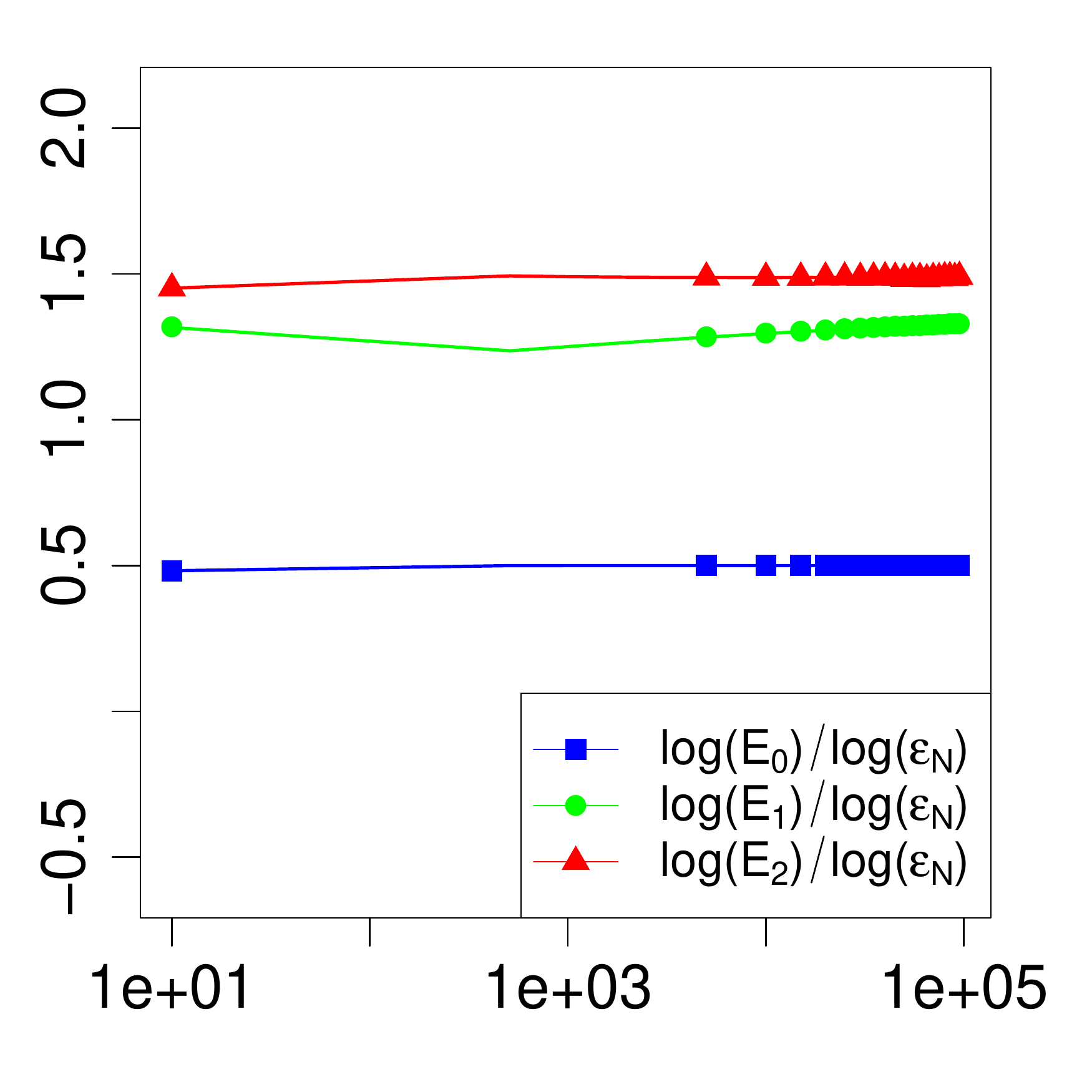}
            \vspace{-0.9cm}
            \caption{$\bb{\alpha} = (1,2)$ and $\beta = 1$}
        \end{subfigure}
        \quad
        \begin{subfigure}[b]{0.22\textwidth}
            \centering
            \includegraphics[width=\textwidth, height=0.85\textwidth]{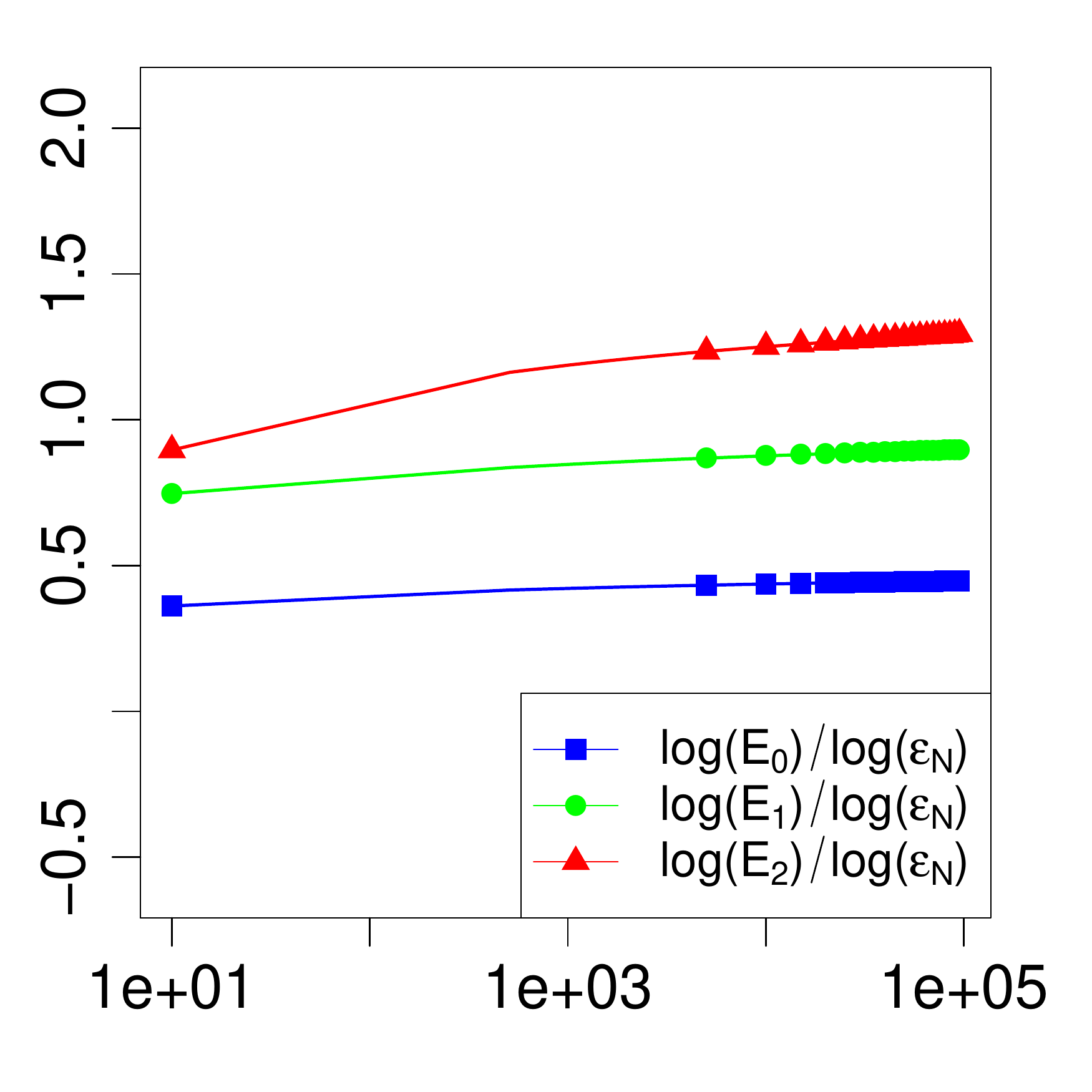}
            \vspace{-0.9cm}
            \caption{$\bb{\alpha} = (1,3)$ and $\beta = 1$}
        \end{subfigure}
        \quad
        \begin{subfigure}[b]{0.22\textwidth}
            \centering
            \includegraphics[width=\textwidth, height=0.85\textwidth]{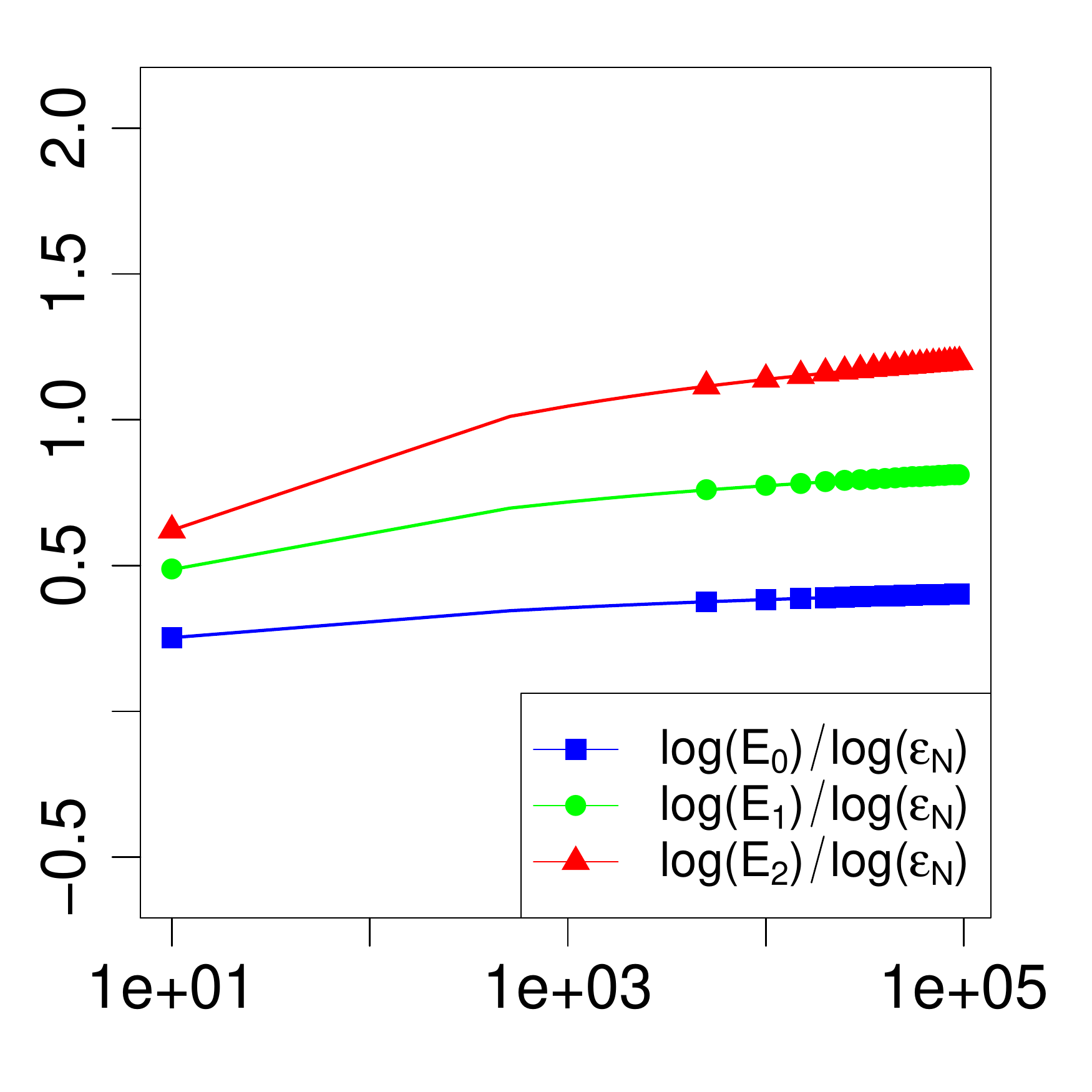}
            \vspace{-0.9cm}
            \caption{$\bb{\alpha} = (1,4)$ and $\beta = 1$}
        \end{subfigure}
        \begin{subfigure}[b]{0.22\textwidth}
            \centering
            \includegraphics[width=\textwidth, height=0.85\textwidth]{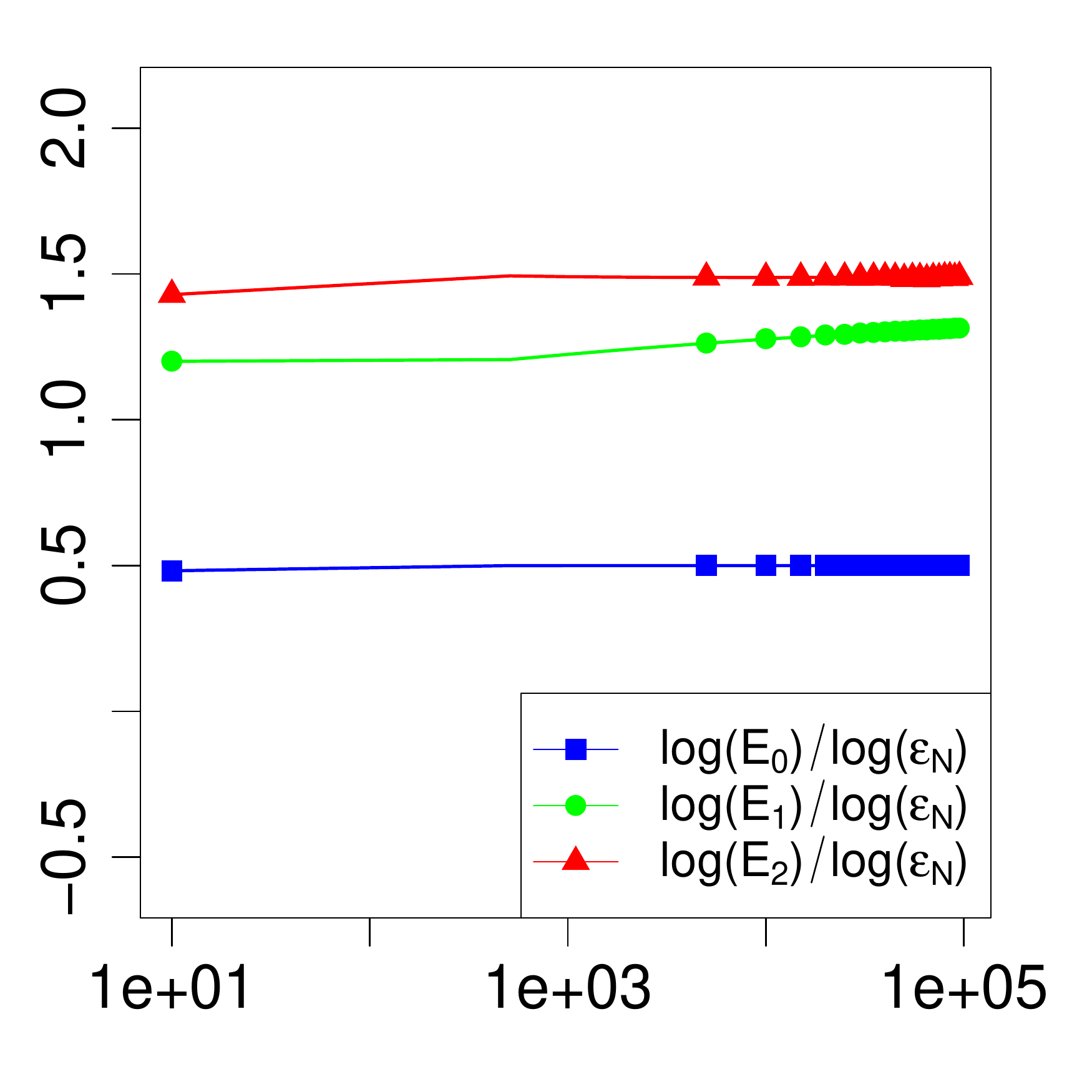}
            \vspace{-0.9cm}
            \caption{$\bb{\alpha} = (2,1)$ and $\beta = 1$}
        \end{subfigure}
        \quad
        \begin{subfigure}[b]{0.22\textwidth}
            \centering
            \includegraphics[width=\textwidth, height=0.85\textwidth]{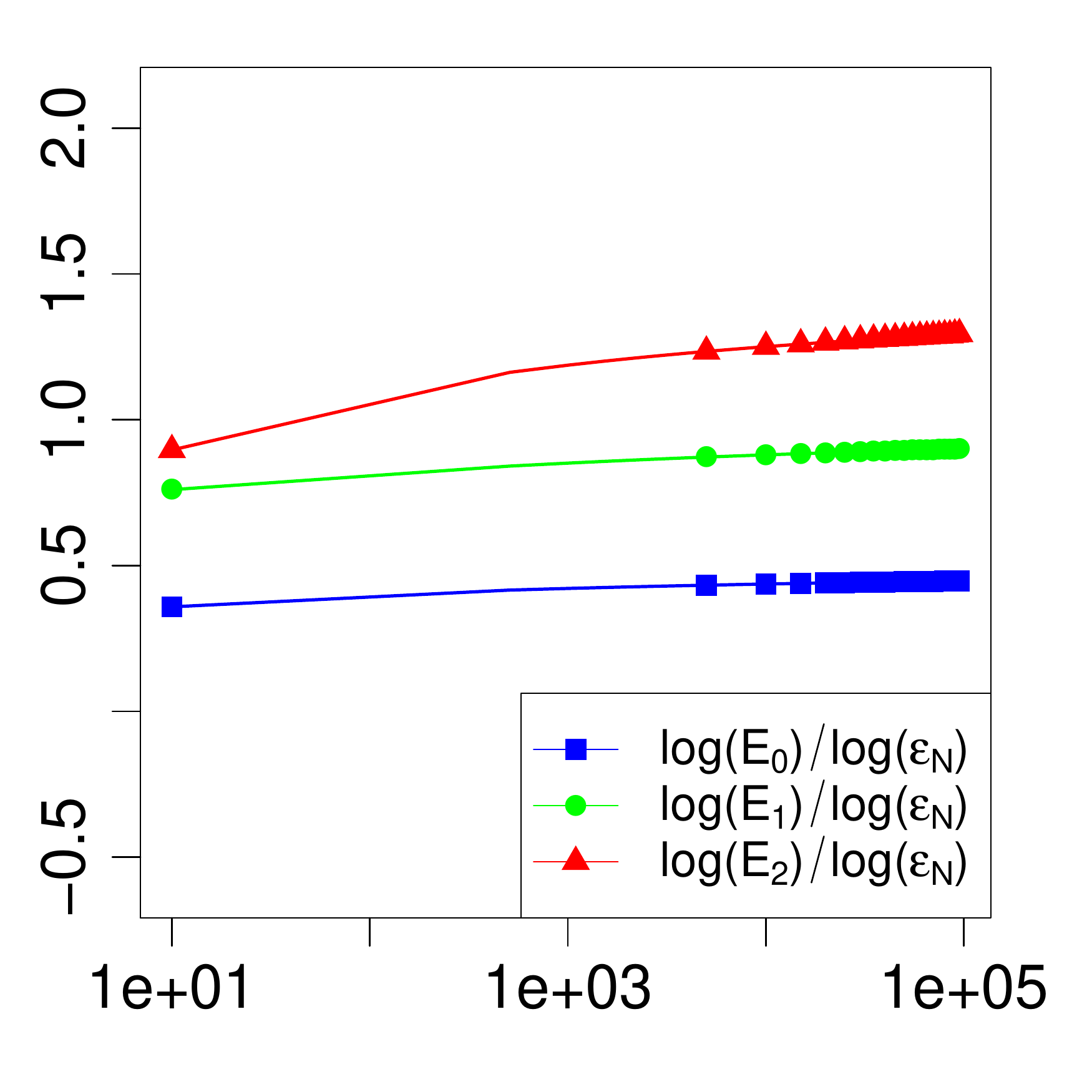}
            \vspace{-0.9cm}
            \caption{$\bb{\alpha} = (2,2)$ and $\beta = 1$}
        \end{subfigure}
        \quad
        \begin{subfigure}[b]{0.22\textwidth}
            \centering
            \includegraphics[width=\textwidth, height=0.85\textwidth]{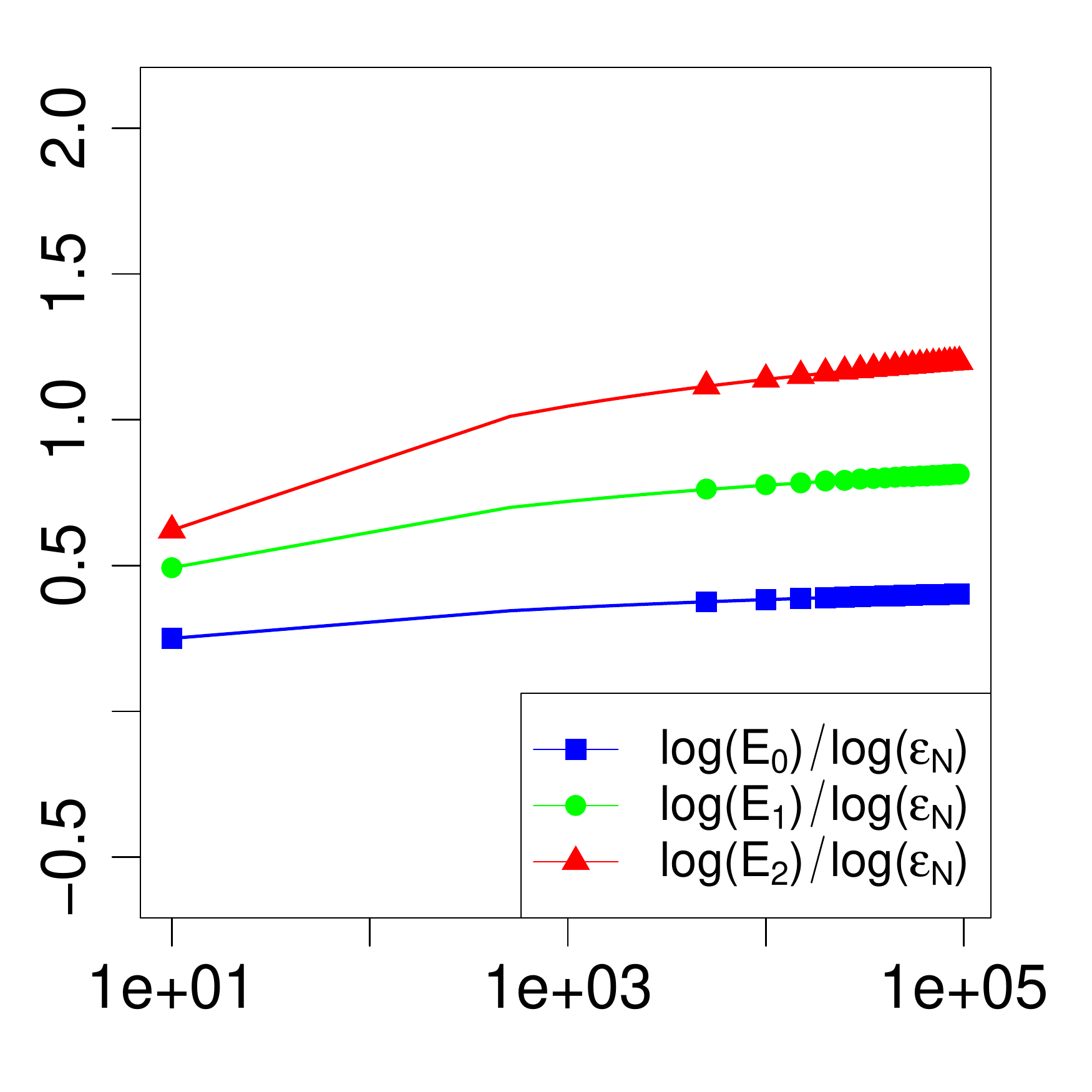}
            \vspace{-0.9cm}
            \caption{$\bb{\alpha} = (2,3)$ and $\beta = 1$}
        \end{subfigure}
        \quad
        \begin{subfigure}[b]{0.22\textwidth}
            \centering
            \includegraphics[width=\textwidth, height=0.85\textwidth]{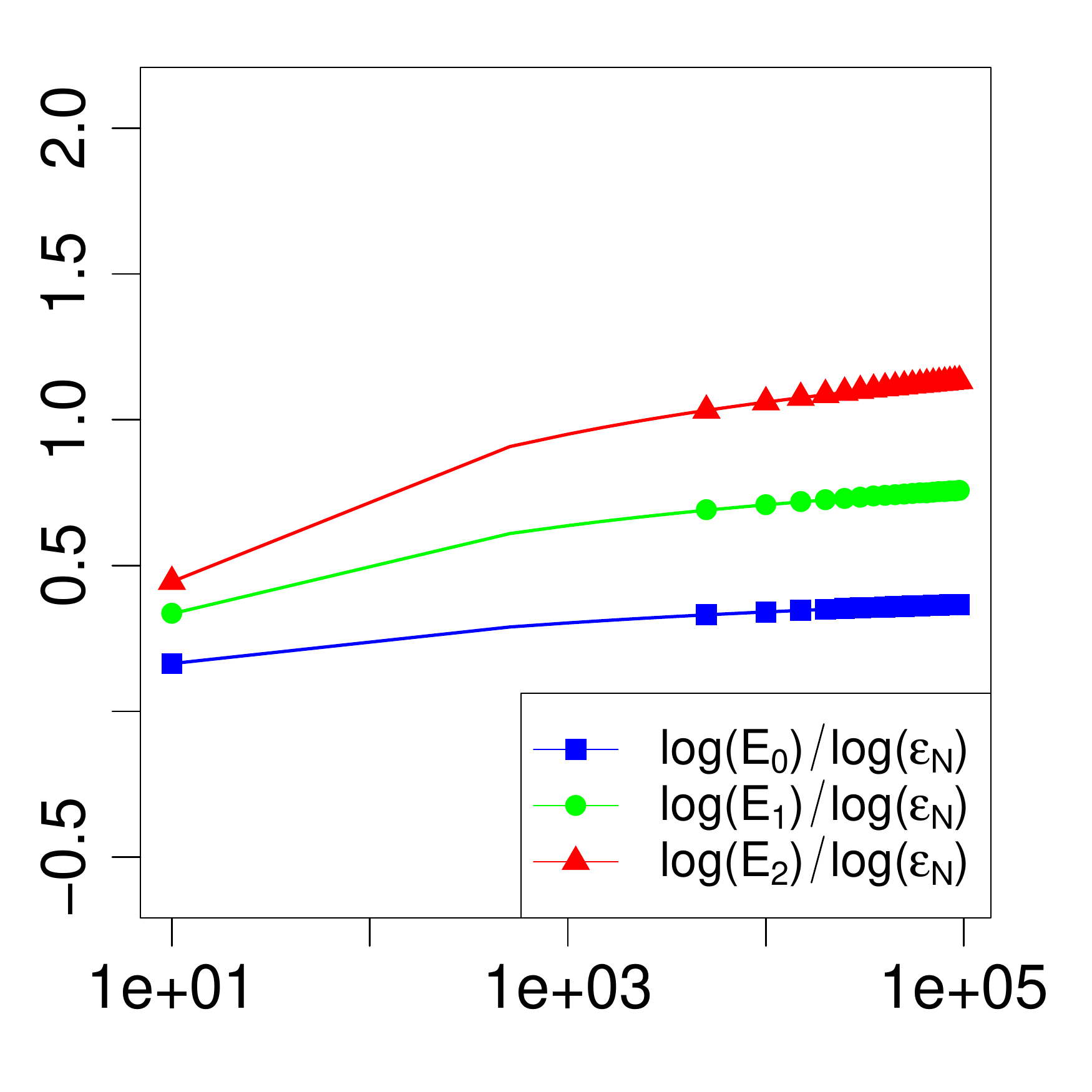}
            \vspace{-0.9cm}
            \caption{$\bb{\alpha} = (2,4)$ and $\beta = 1$}
        \end{subfigure}
        \begin{subfigure}[b]{0.22\textwidth}
            \centering
            \includegraphics[width=\textwidth, height=0.85\textwidth]{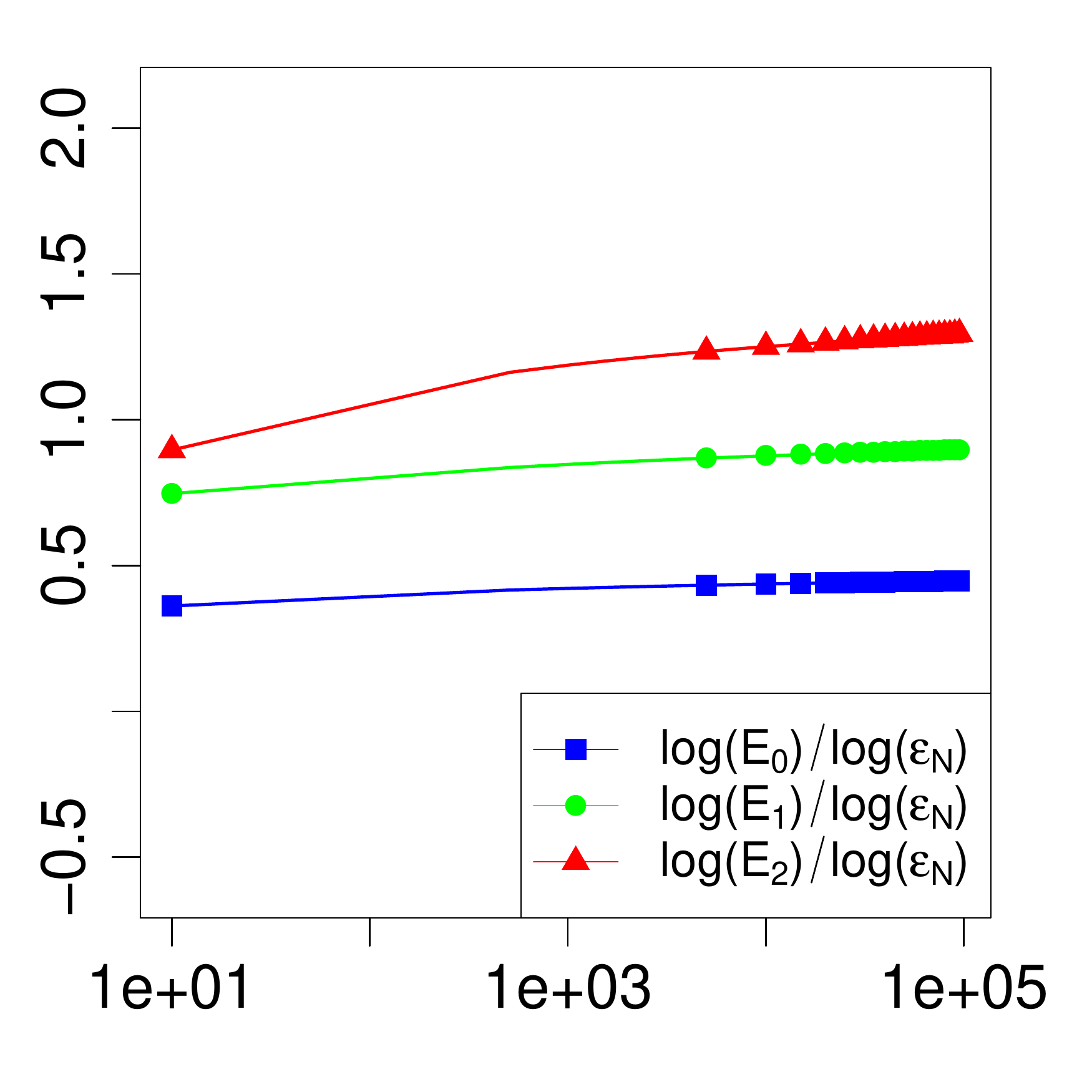}
            \vspace{-0.9cm}
            \caption{$\bb{\alpha} = (3,1)$ and $\beta = 1$}
        \end{subfigure}
        \quad
        \begin{subfigure}[b]{0.22\textwidth}
            \centering
            \includegraphics[width=\textwidth, height=0.85\textwidth]{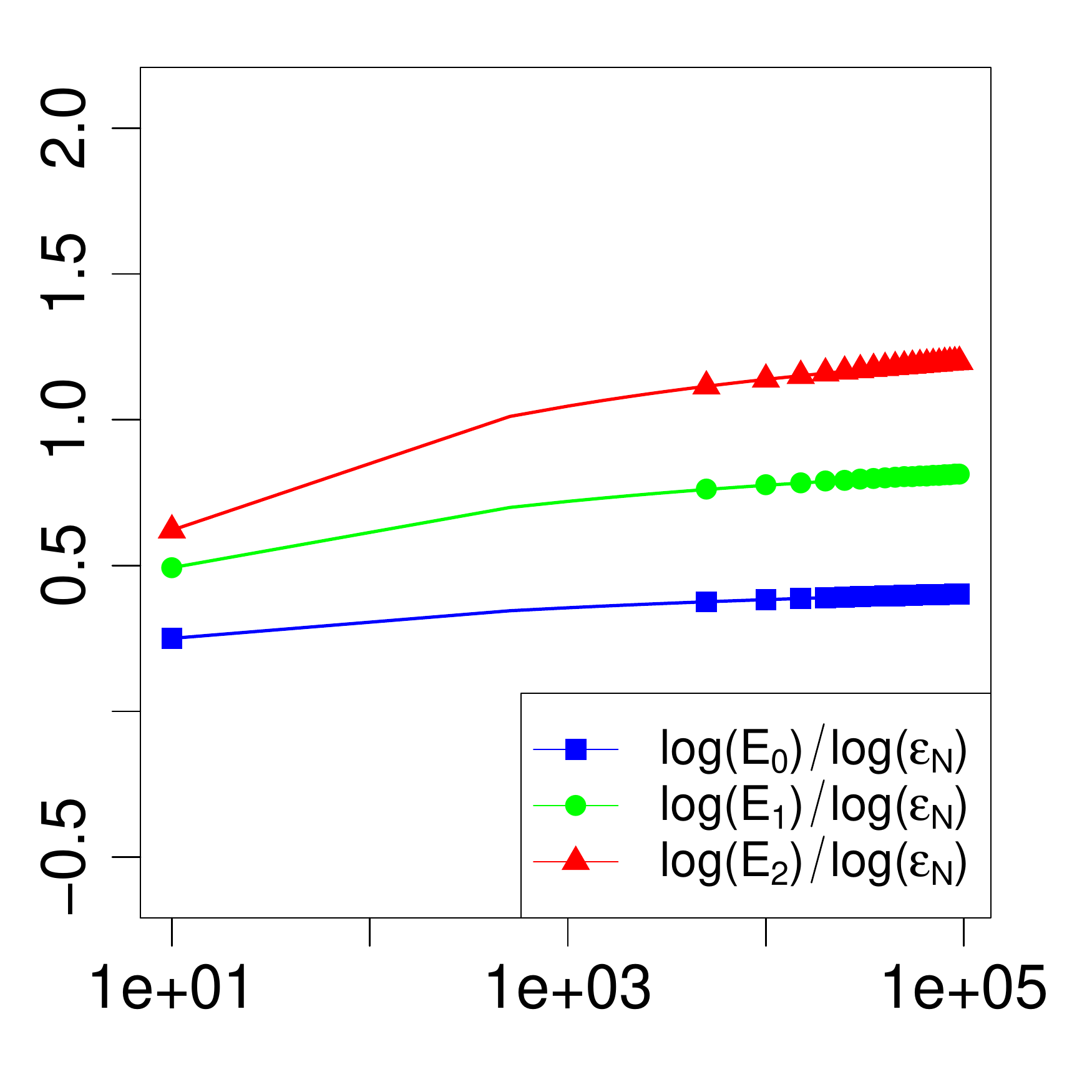}
            \vspace{-0.9cm}
            \caption{$\bb{\alpha} = (3,2)$ and $\beta = 1$}
        \end{subfigure}
        \quad
        \begin{subfigure}[b]{0.22\textwidth}
            \centering
            \includegraphics[width=\textwidth, height=0.85\textwidth]{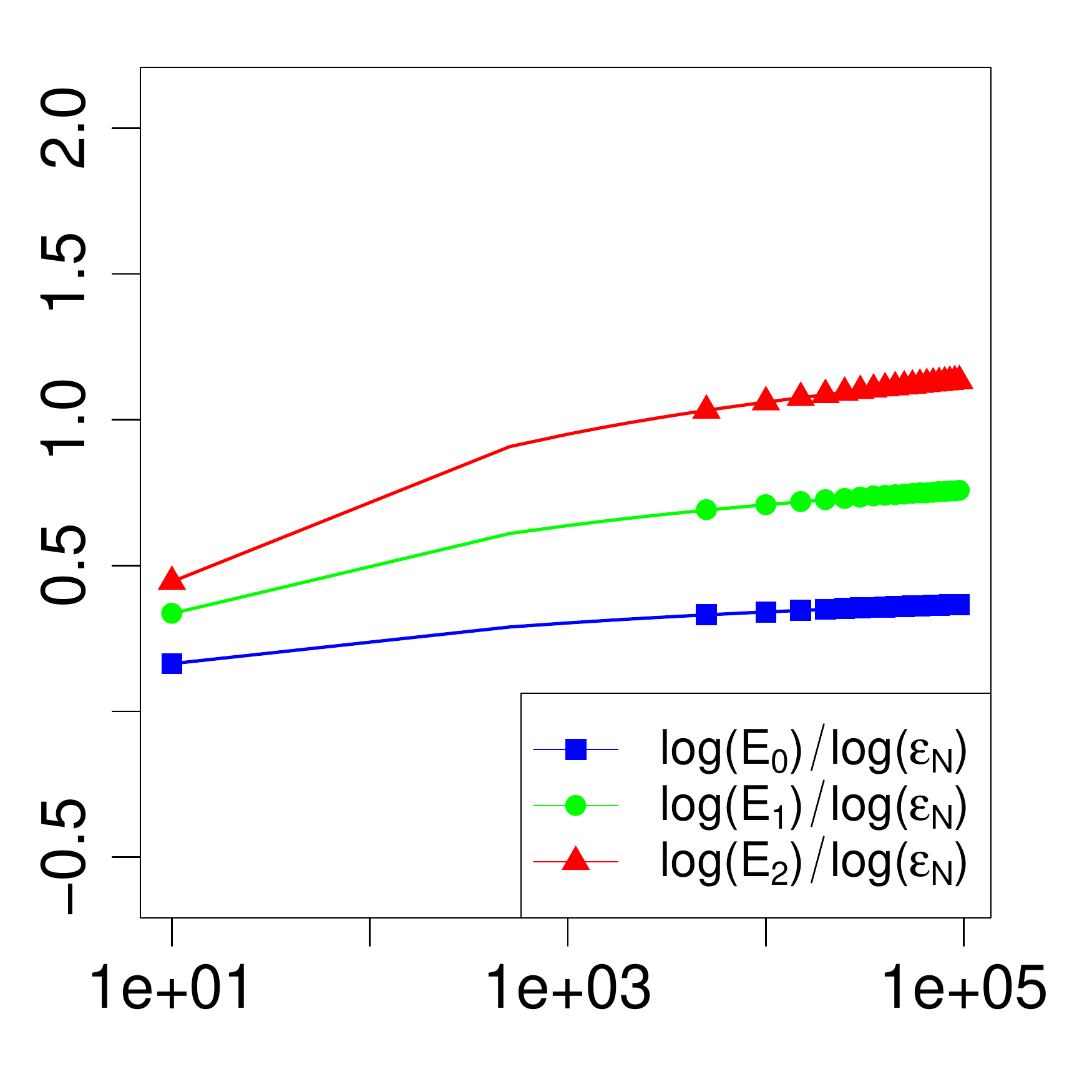}
            \vspace{-0.9cm}
            \caption{$\bb{\alpha} = (3,3)$ and $\beta = 1$}
        \end{subfigure}
        \quad
        \begin{subfigure}[b]{0.22\textwidth}
            \centering
            \includegraphics[width=\textwidth, height=0.85\textwidth]{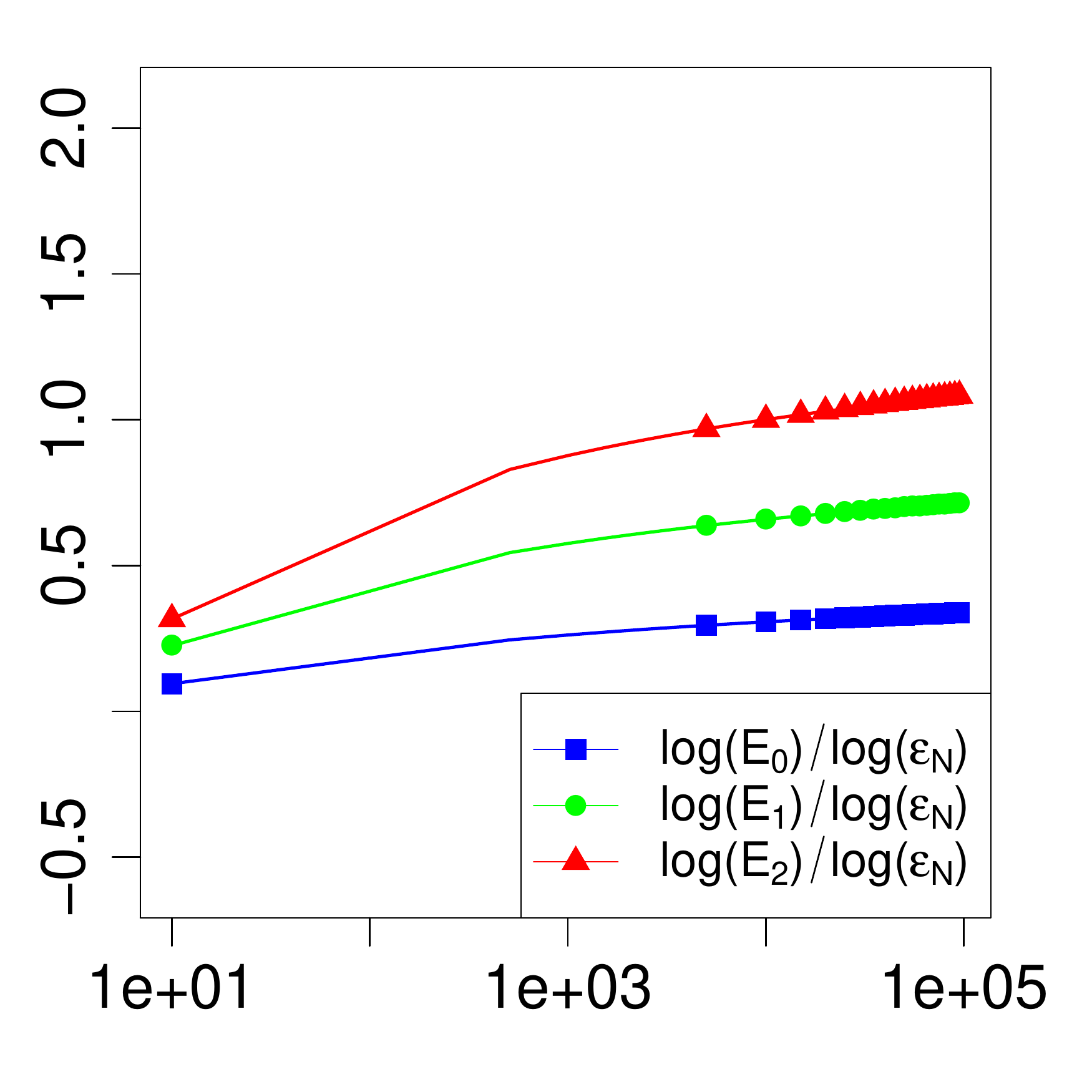}
            \vspace{-0.9cm}
            \caption{$\bb{\alpha} = (3,4)$ and $\beta = 1$}
        \end{subfigure}
        \caption{Plots of $\log E_i / \log \e_N$ as a function of $N$, for various choices of $\bb{\alpha}$, when $\beta = 1$. The horizontal axis is on a logarithmic scale. The plots confirm \eqref{eq:liminf.exponent.bound} and bring strong evidence for the validity of Theorem~\ref{thm:p.k.expansion}.}
        \label{fig:error.exponents.plots.beta.1}
    \end{figure}

    \phantom{vertical spacing}
    \begin{figure}[H]
        \captionsetup[subfigure]{labelformat=empty}
        \captionsetup{width=0.8\linewidth}
        \vspace{-0.5cm}
        \centering
        \begin{subfigure}[b]{0.22\textwidth}
            \centering
            \includegraphics[width=\textwidth, height=0.85\textwidth]{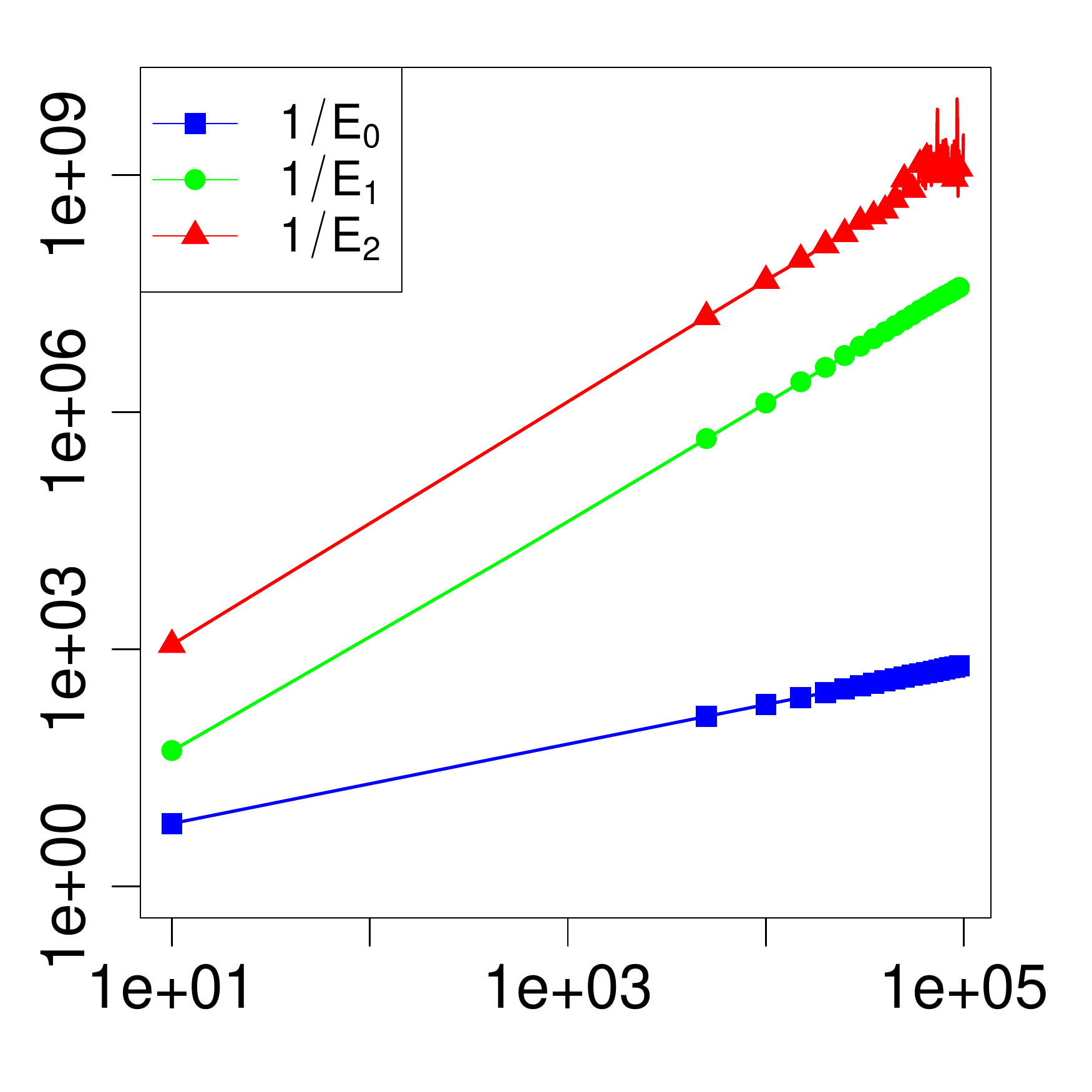}
            \vspace{-0.9cm}
            \caption{$\bb{\alpha} = (1,1)$ and $\beta = 2$}
        \end{subfigure}
        \quad
        \begin{subfigure}[b]{0.22\textwidth}
            \centering
            \includegraphics[width=\textwidth, height=0.85\textwidth]{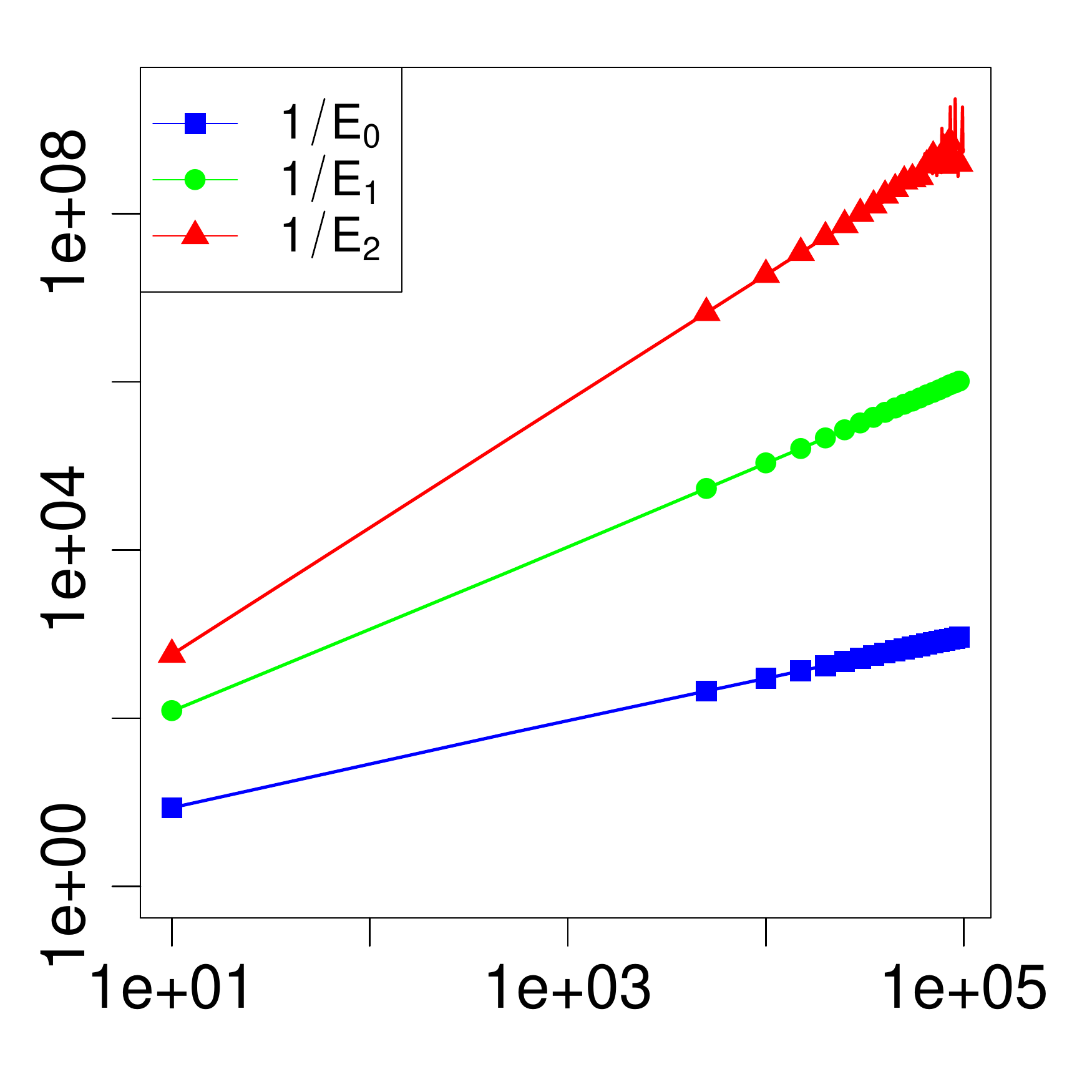}
            \vspace{-0.9cm}
            \caption{$\bb{\alpha} = (1,2)$ and $\beta = 2$}
        \end{subfigure}
        \quad
        \begin{subfigure}[b]{0.22\textwidth}
            \centering
            \includegraphics[width=\textwidth, height=0.85\textwidth]{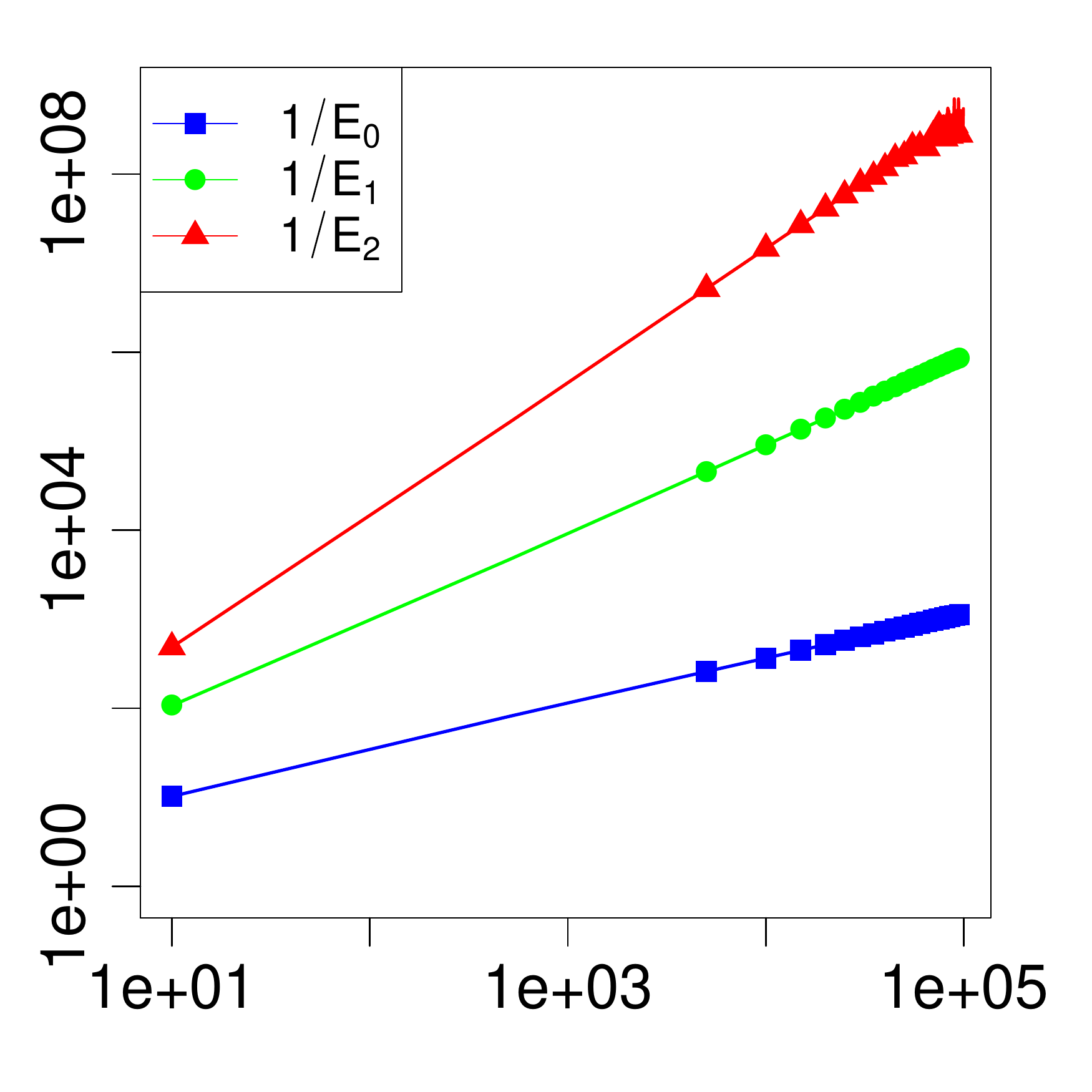}
            \vspace{-0.9cm}
            \caption{$\bb{\alpha} = (1,3)$ and $\beta = 2$}
        \end{subfigure}
        \quad
        \begin{subfigure}[b]{0.22\textwidth}
            \centering
            \includegraphics[width=\textwidth, height=0.85\textwidth]{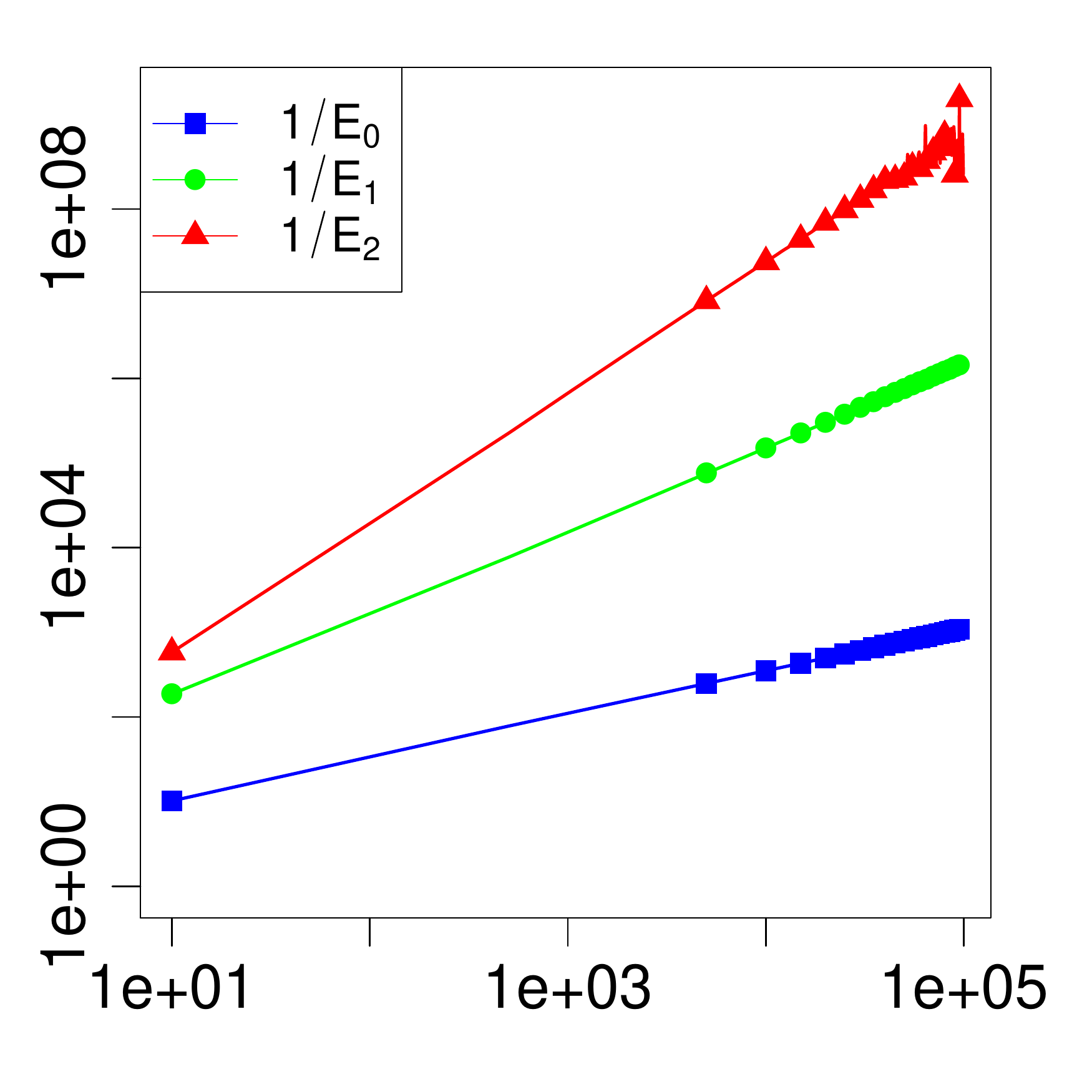}
            \vspace{-0.9cm}
            \caption{$\bb{\alpha} = (1,4)$ and $\beta = 2$}
        \end{subfigure}
        \begin{subfigure}[b]{0.22\textwidth}
            \centering
            \includegraphics[width=\textwidth, height=0.85\textwidth]{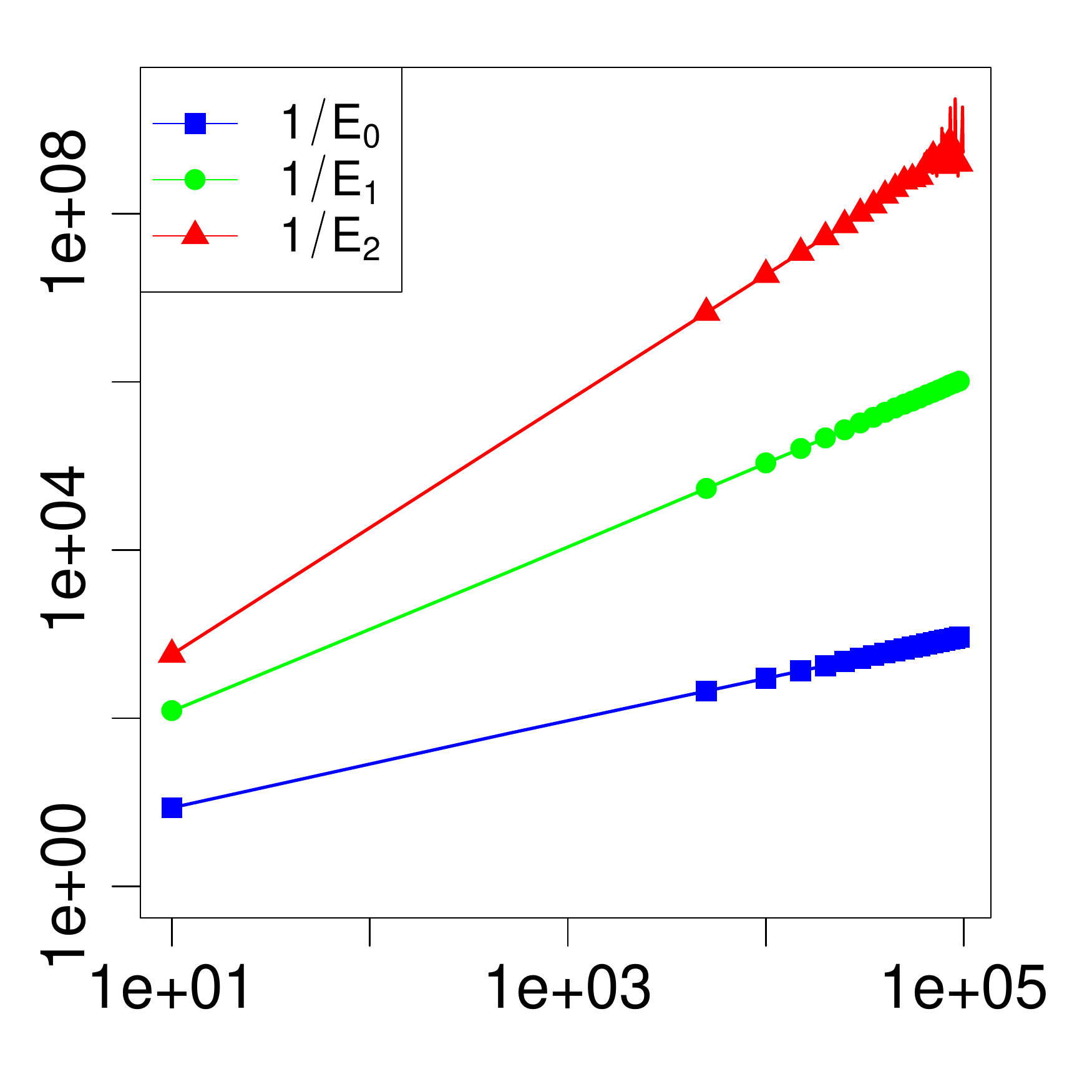}
            \vspace{-0.9cm}
            \caption{$\bb{\alpha} = (2,1)$ and $\beta = 2$}
        \end{subfigure}
        \quad
        \begin{subfigure}[b]{0.22\textwidth}
            \centering
            \includegraphics[width=\textwidth, height=0.85\textwidth]{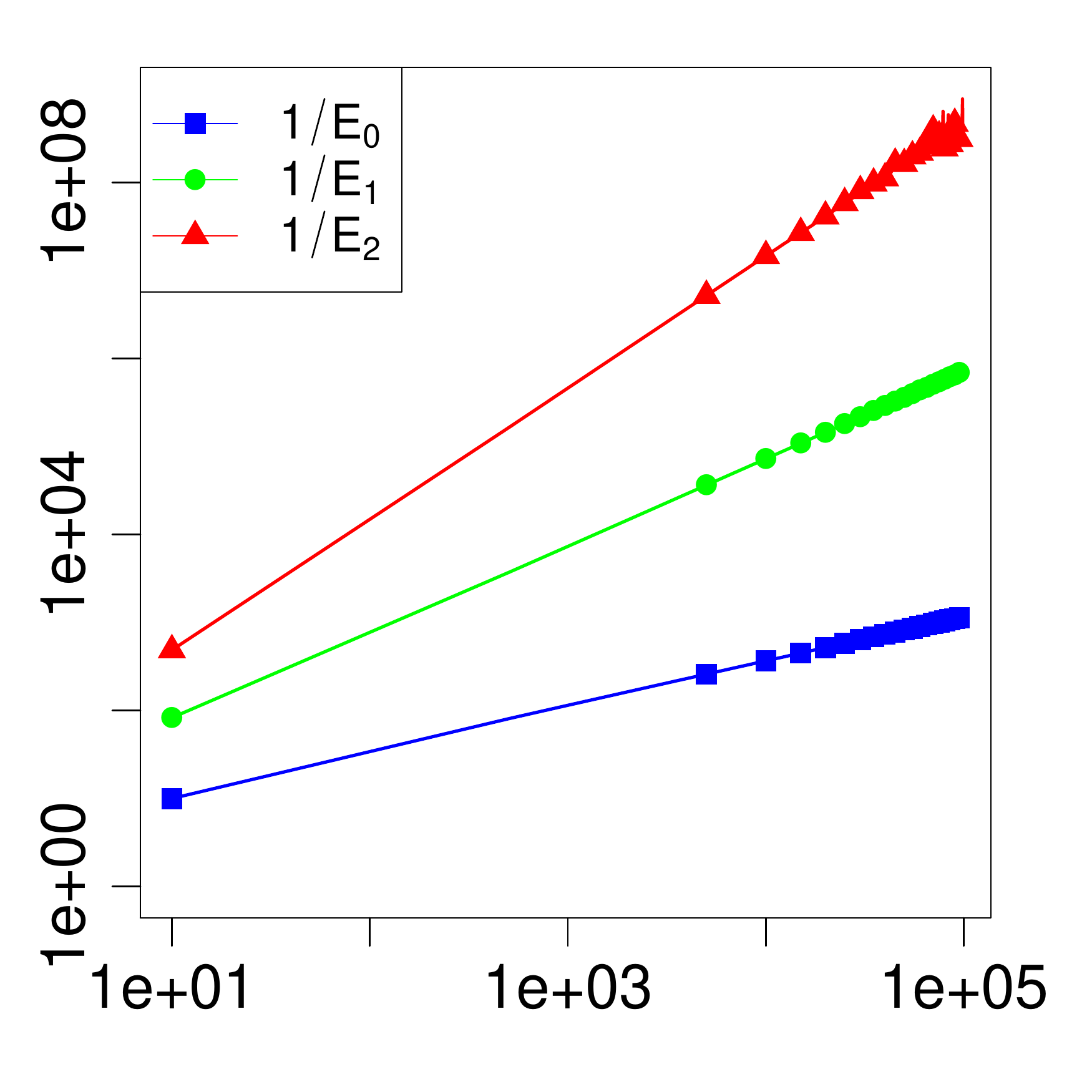}
            \vspace{-0.9cm}
            \caption{$\bb{\alpha} = (2,2)$ and $\beta = 2$}
        \end{subfigure}
        \quad
        \begin{subfigure}[b]{0.22\textwidth}
            \centering
            \includegraphics[width=\textwidth, height=0.85\textwidth]{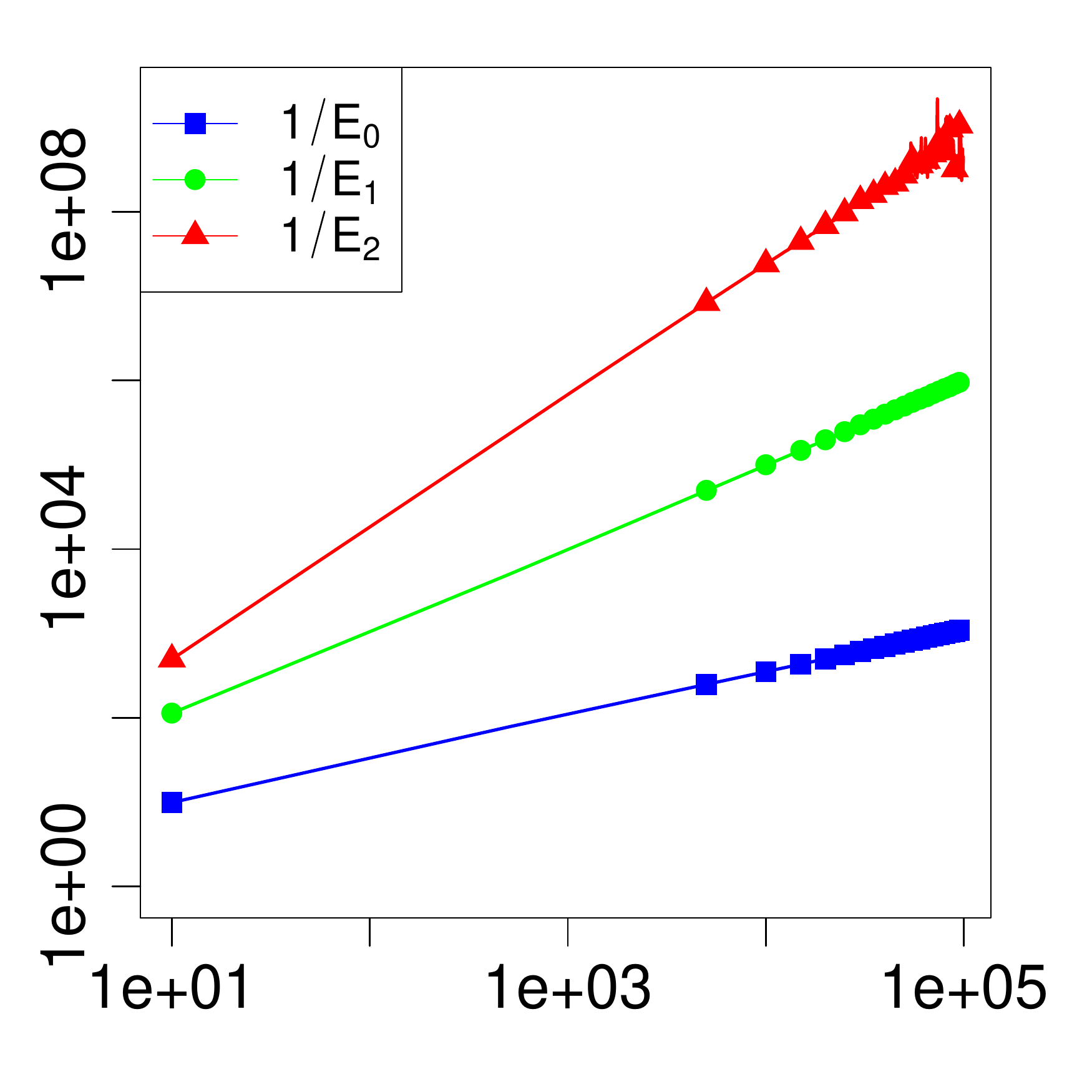}
            \vspace{-0.9cm}
            \caption{$\bb{\alpha} = (2,3)$ and $\beta = 2$}
        \end{subfigure}
        \quad
        \begin{subfigure}[b]{0.22\textwidth}
            \centering
            \includegraphics[width=\textwidth, height=0.85\textwidth]{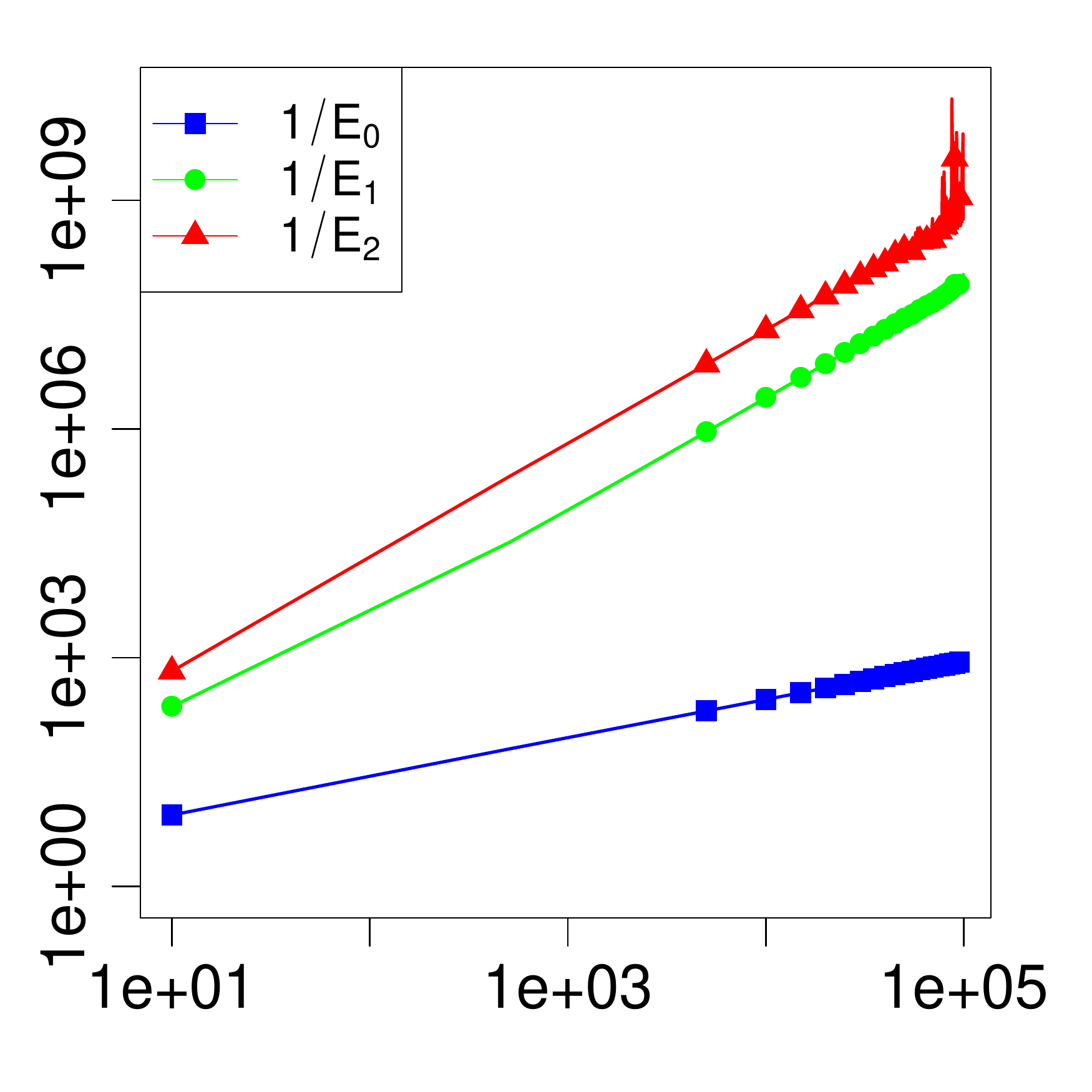}
            \vspace{-0.9cm}
            \caption{$\bb{\alpha} = (2,4)$ and $\beta = 2$}
        \end{subfigure}
        \begin{subfigure}[b]{0.22\textwidth}
            \centering
            \includegraphics[width=\textwidth, height=0.85\textwidth]{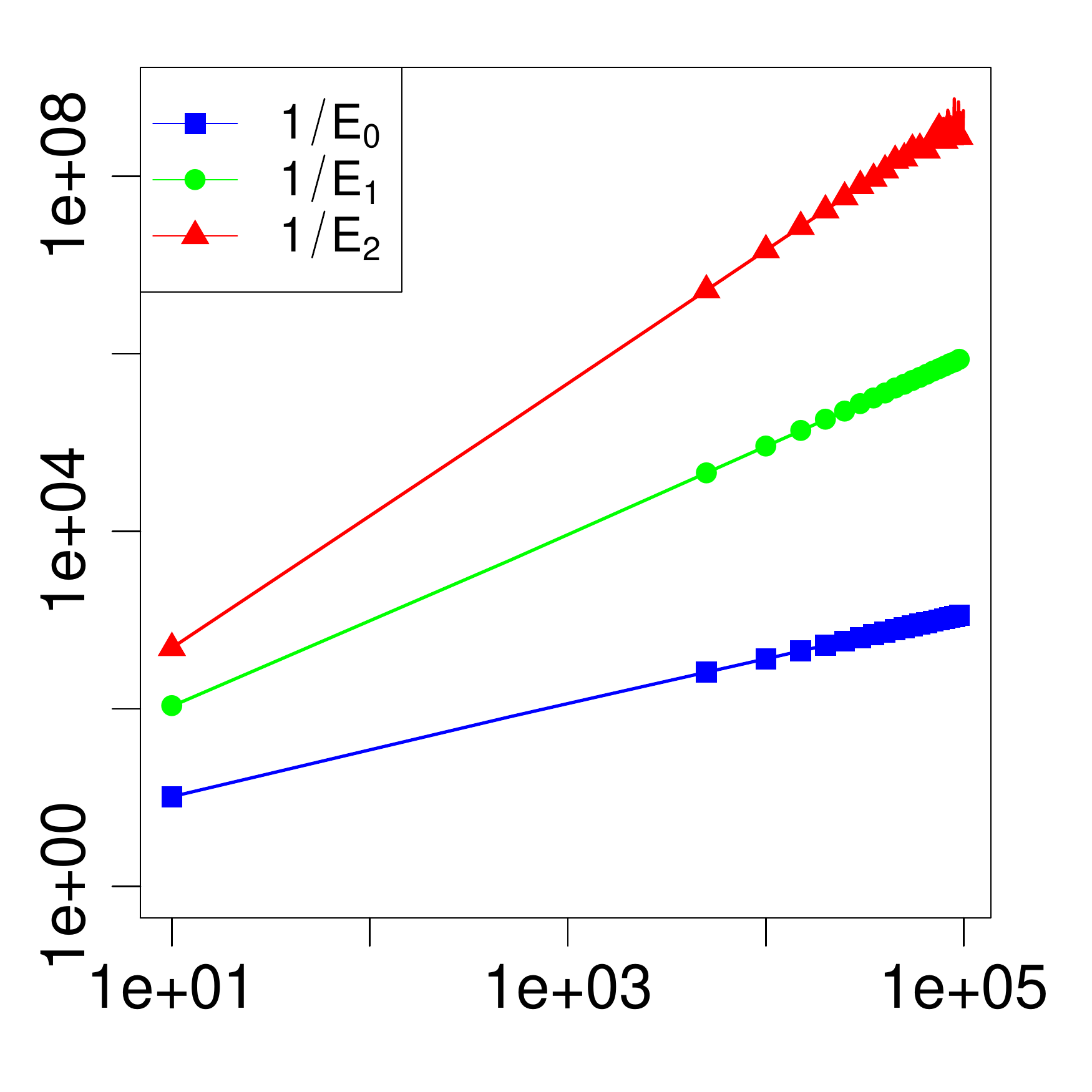}
            \vspace{-0.9cm}
            \caption{$\bb{\alpha} = (3,1)$ and $\beta = 2$}
        \end{subfigure}
        \quad
        \begin{subfigure}[b]{0.22\textwidth}
            \centering
            \includegraphics[width=\textwidth, height=0.85\textwidth]{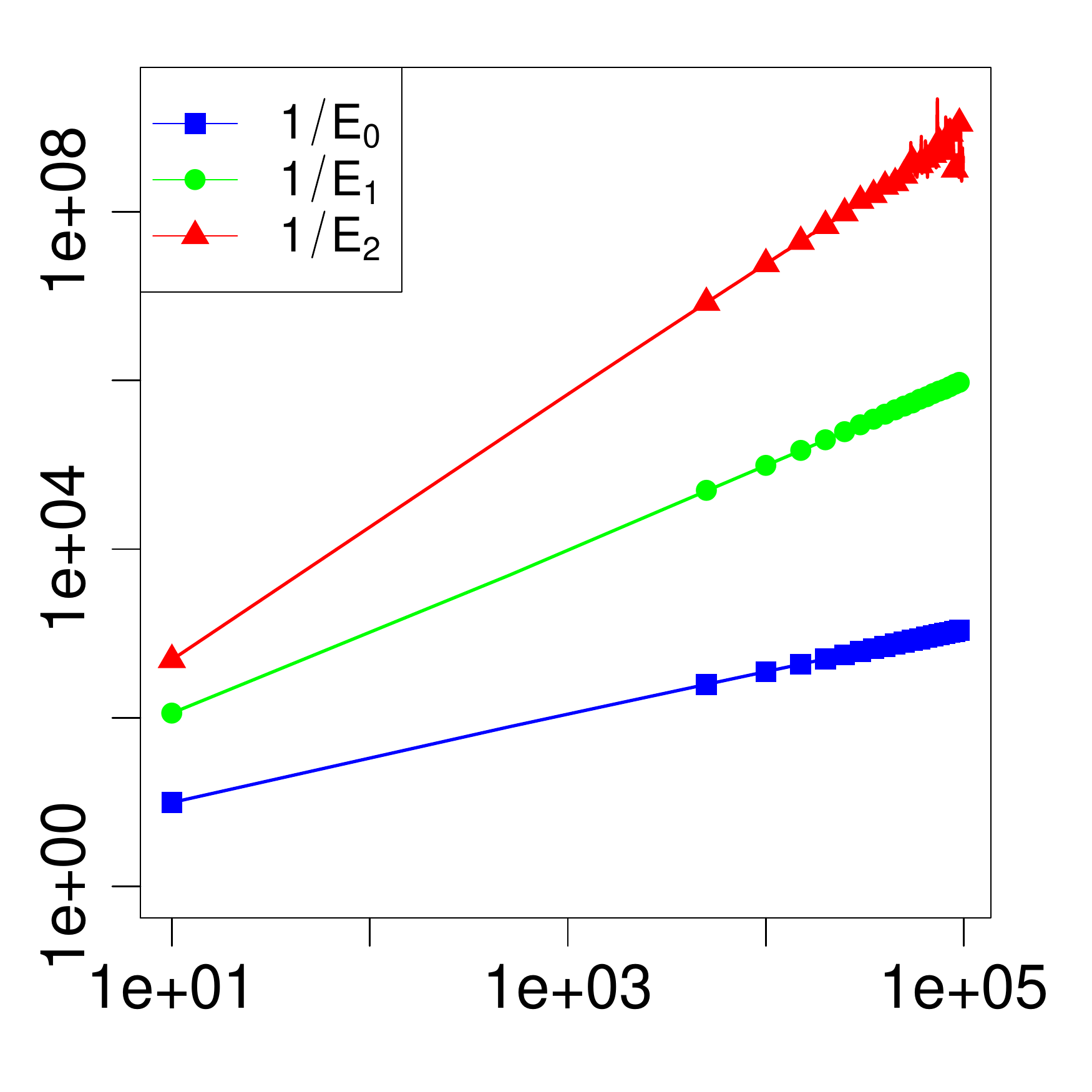}
            \vspace{-0.9cm}
            \caption{$\bb{\alpha} = (3,2)$ and $\beta = 2$}
        \end{subfigure}
        \quad
        \begin{subfigure}[b]{0.22\textwidth}
            \centering
            \includegraphics[width=\textwidth, height=0.85\textwidth]{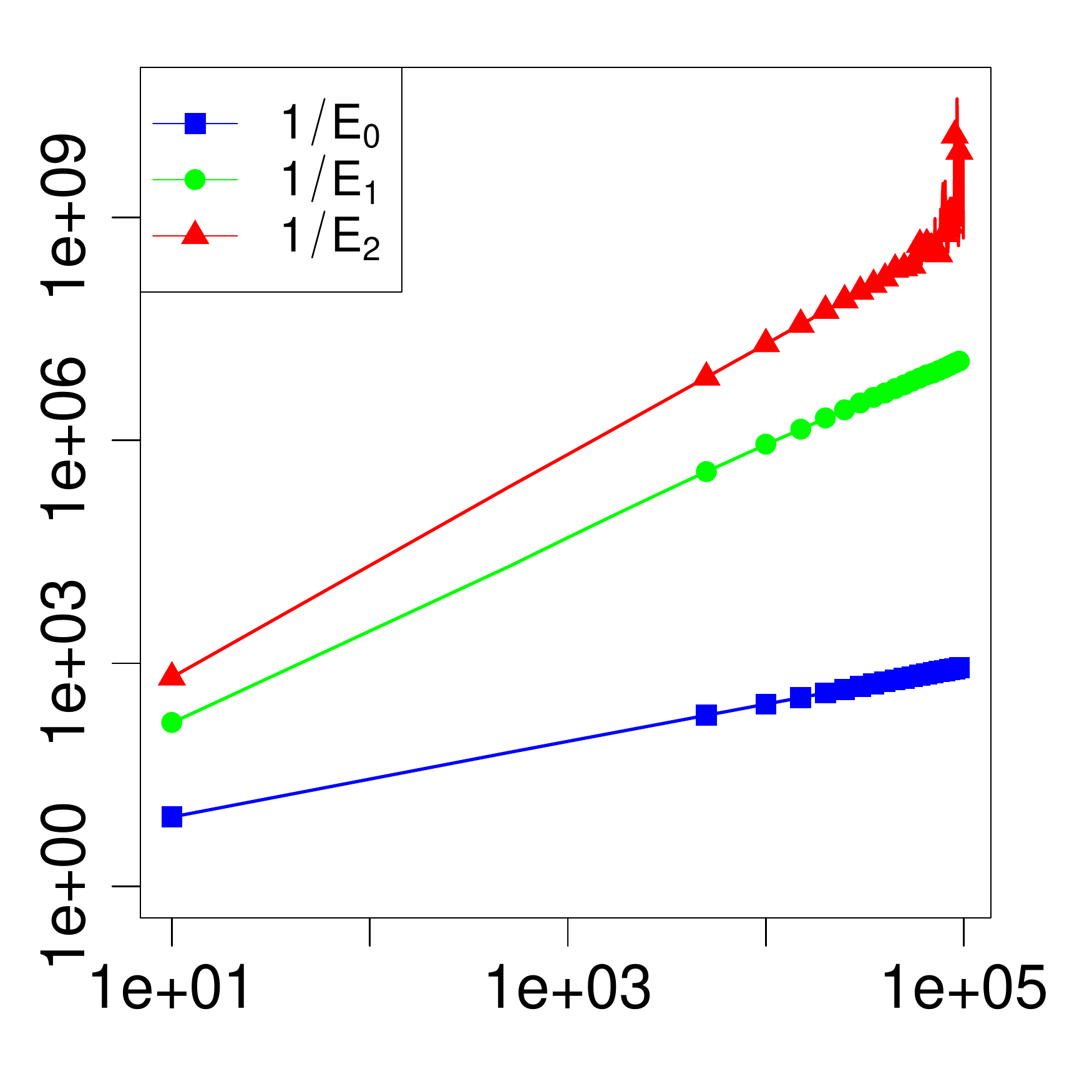}
            \vspace{-0.9cm}
            \caption{$\bb{\alpha} = (3,3)$ and $\beta = 2$}
        \end{subfigure}
        \quad
        \begin{subfigure}[b]{0.22\textwidth}
            \centering
            \includegraphics[width=\textwidth, height=0.85\textwidth]{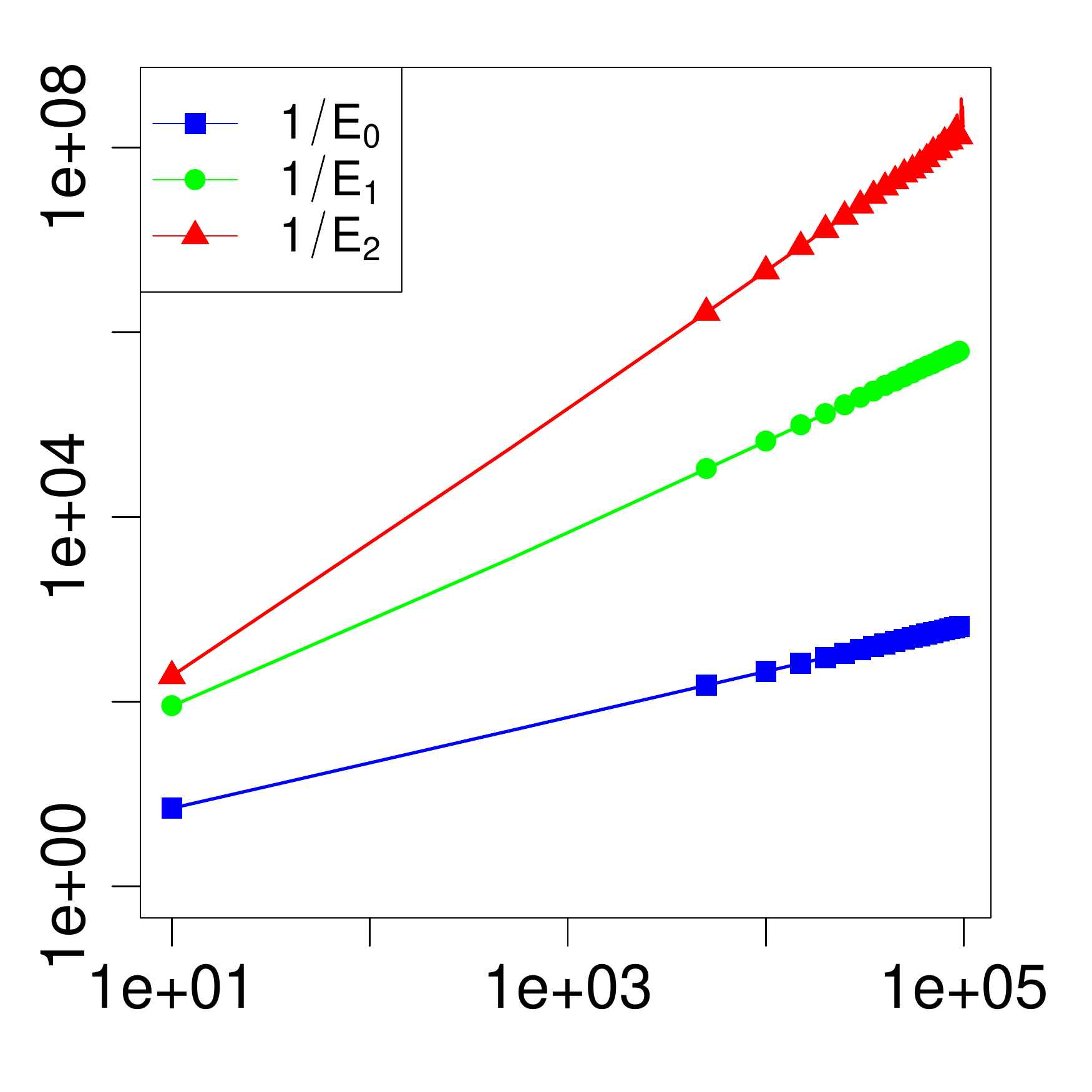}
            \vspace{-0.9cm}
            \caption{$\bb{\alpha} = (3,4)$ and $\beta = 2$}
        \end{subfigure}
        \caption{Plots of $1 / E_i$ as a function of $N$, for various choices of $\bb{\alpha}$, when $\beta = 2$. Both the horizontal and vertical axes are on a logarithmic scale. The plots clearly illustrate how the addition of correction terms from Theorem~\ref{thm:p.k.expansion} to the base approximation \eqref{eq:E.0} improves it.}
        \label{fig:loglog.errors.plots.beta.2}
    \end{figure}
    \begin{figure}[H]
        \captionsetup[subfigure]{labelformat=empty}
        \captionsetup{width=0.8\linewidth}
        \vspace{-0.5cm}
        \centering
        \begin{subfigure}[b]{0.22\textwidth}
            \centering
            \includegraphics[width=\textwidth, height=0.85\textwidth]{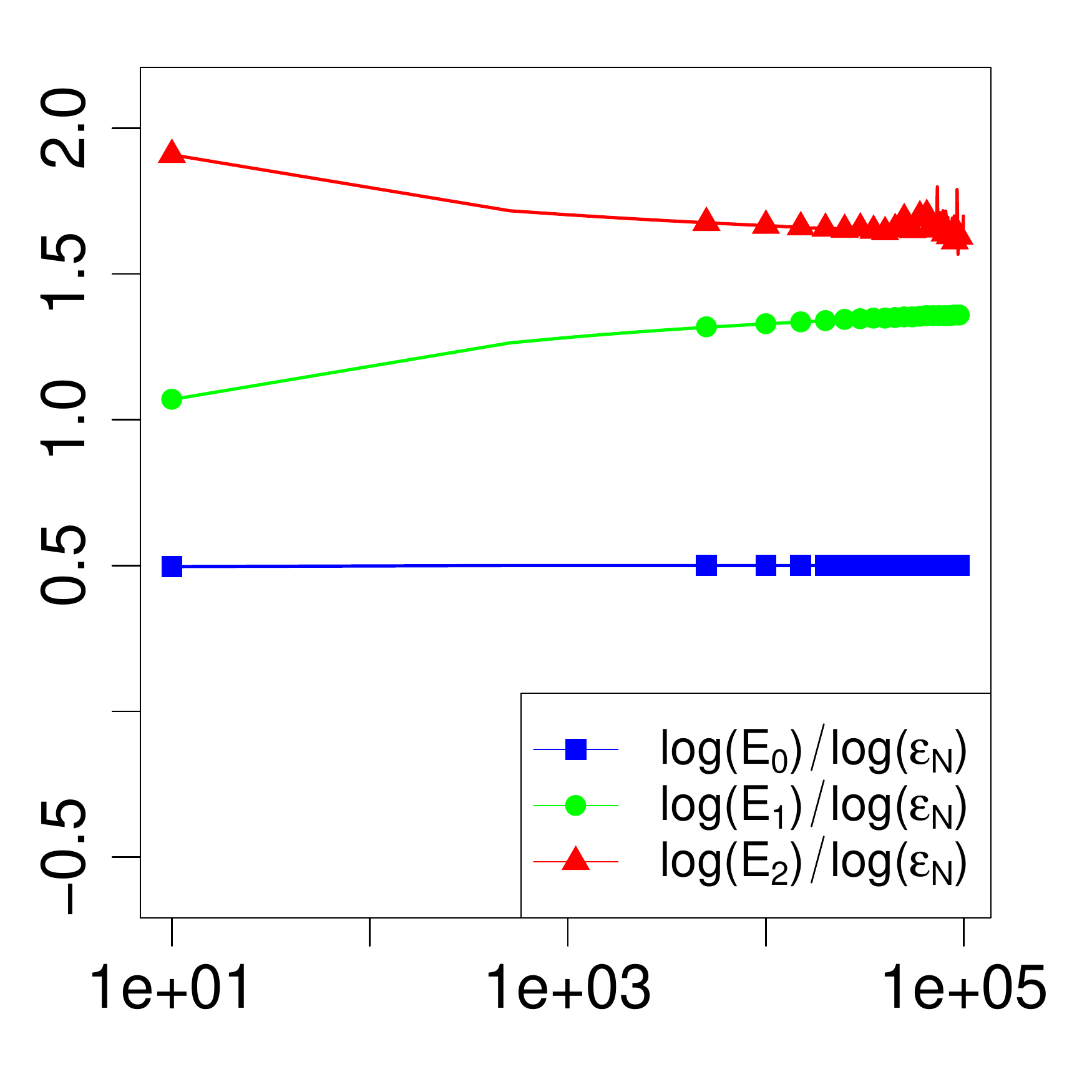}
            \vspace{-0.9cm}
            \caption{$\bb{\alpha} = (1,1)$ and $\beta = 2$}
        \end{subfigure}
        \quad
        \begin{subfigure}[b]{0.22\textwidth}
            \centering
            \includegraphics[width=\textwidth, height=0.85\textwidth]{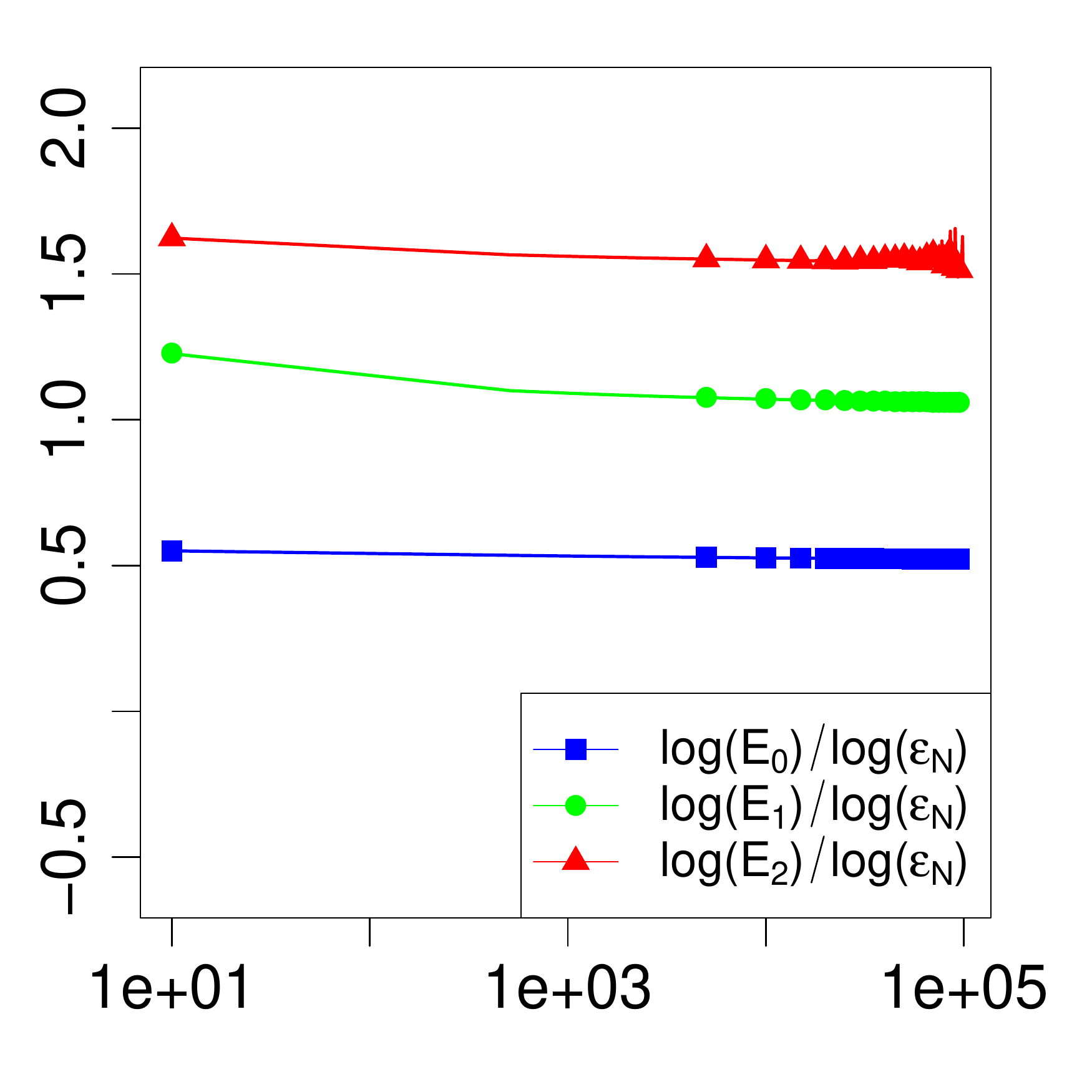}
            \vspace{-0.9cm}
            \caption{$\bb{\alpha} = (1,2)$ and $\beta = 2$}
        \end{subfigure}
        \quad
        \begin{subfigure}[b]{0.22\textwidth}
            \centering
            \includegraphics[width=\textwidth, height=0.85\textwidth]{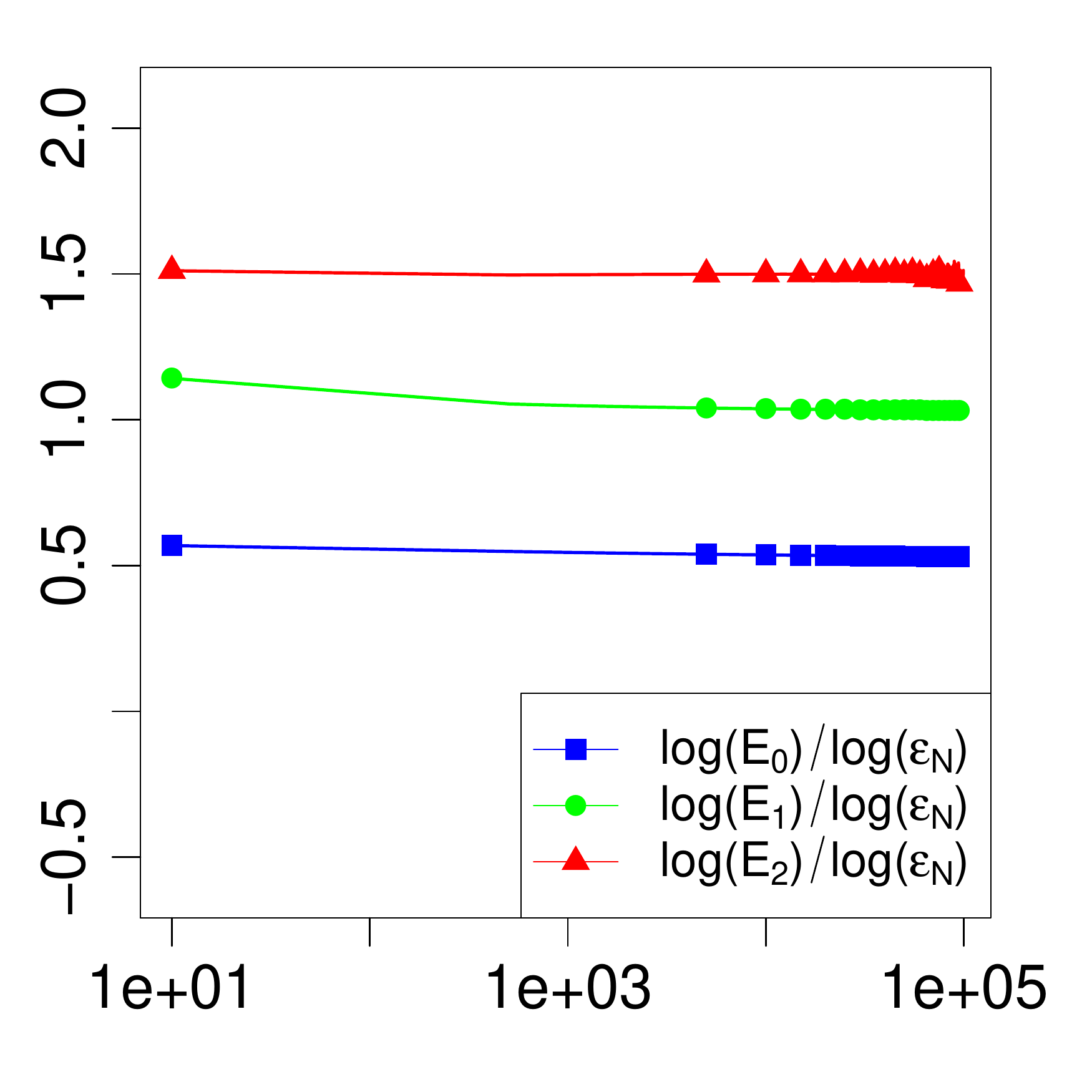}
            \vspace{-0.9cm}
            \caption{$\bb{\alpha} = (1,3)$ and $\beta = 2$}
        \end{subfigure}
        \quad
        \begin{subfigure}[b]{0.22\textwidth}
            \centering
            \includegraphics[width=\textwidth, height=0.85\textwidth]{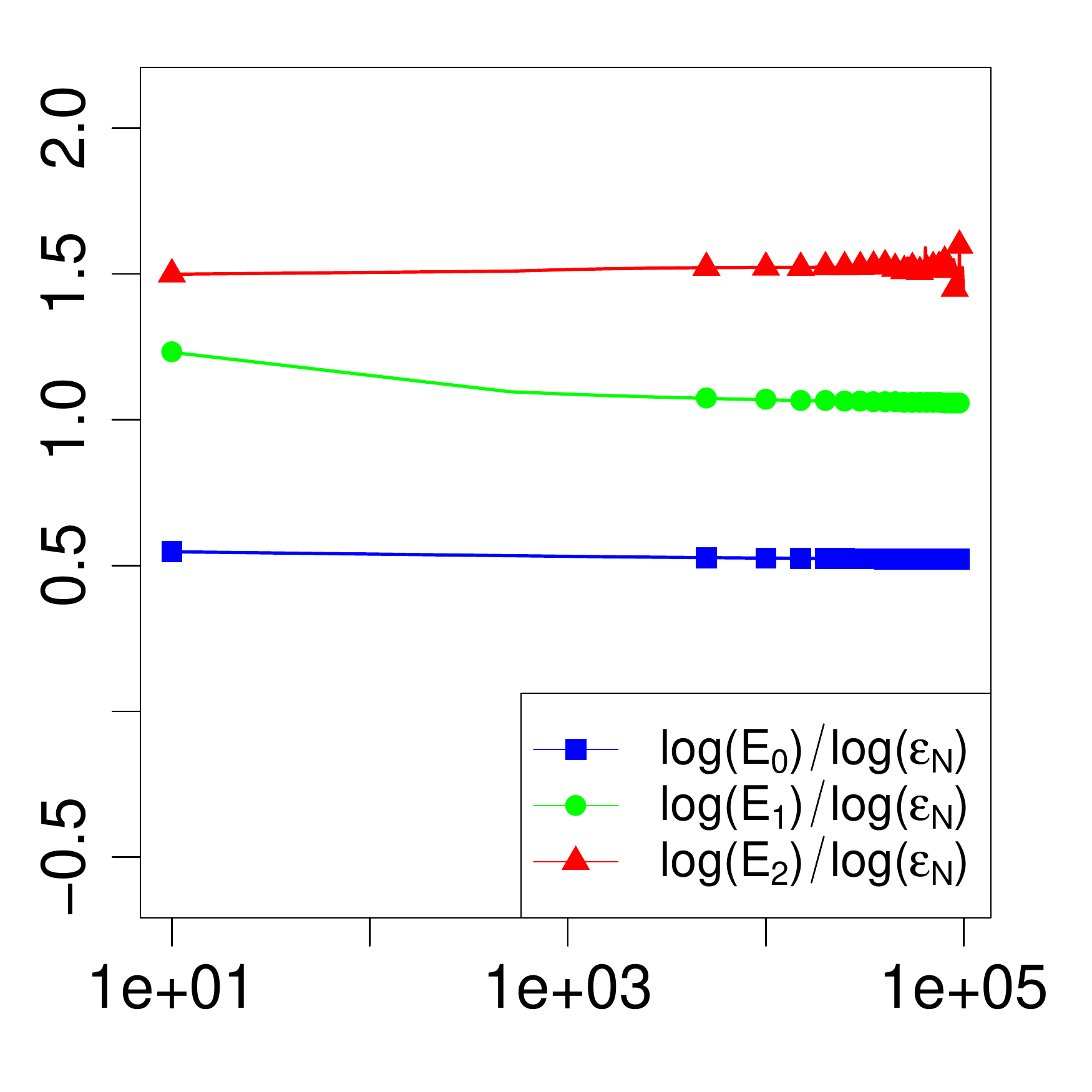}
            \vspace{-0.9cm}
            \caption{$\bb{\alpha} = (1,4)$ and $\beta = 2$}
        \end{subfigure}
        \begin{subfigure}[b]{0.22\textwidth}
            \centering
            \includegraphics[width=\textwidth, height=0.85\textwidth]{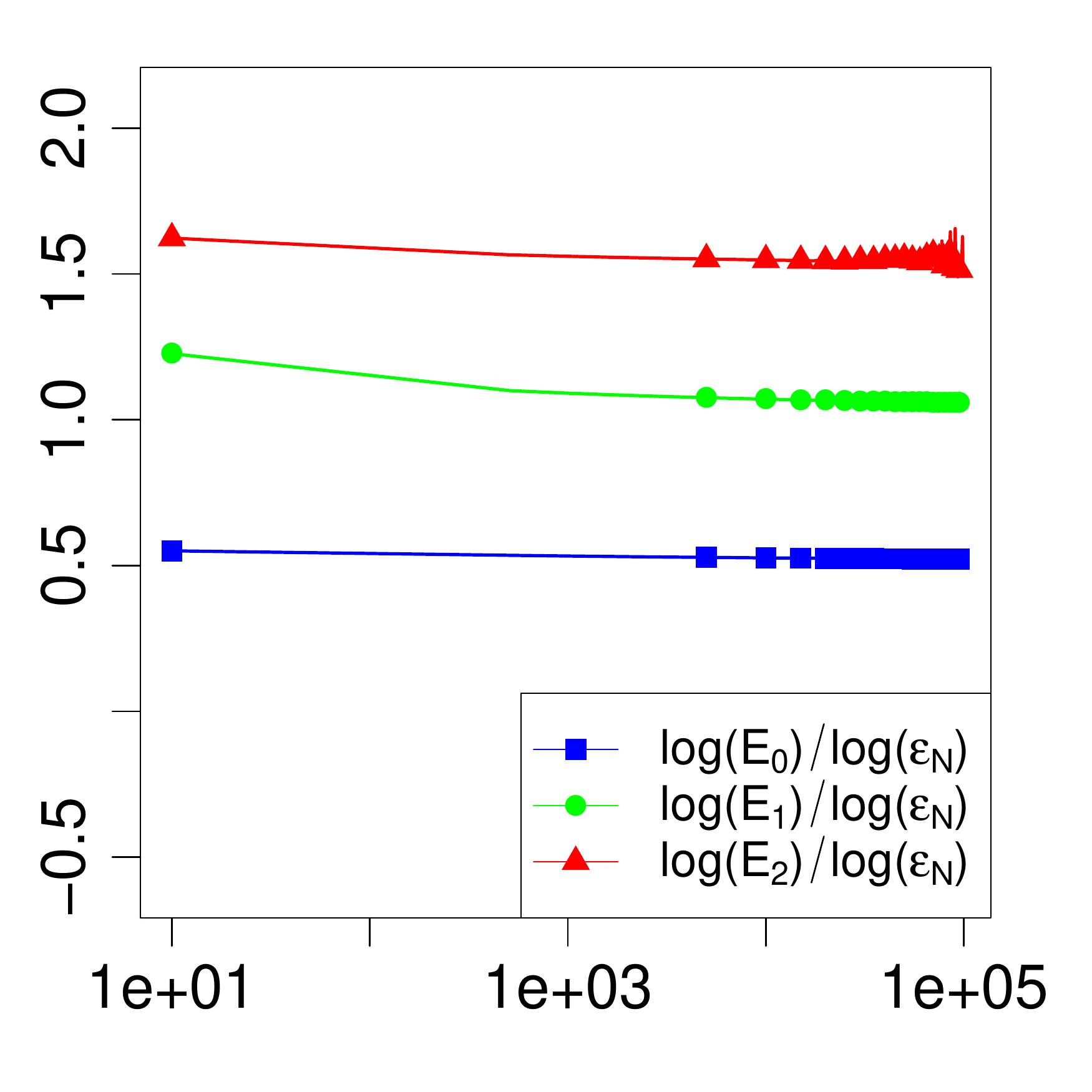}
            \vspace{-0.9cm}
            \caption{$\bb{\alpha} = (2,1)$ and $\beta = 2$}
        \end{subfigure}
        \quad
        \begin{subfigure}[b]{0.22\textwidth}
            \centering
            \includegraphics[width=\textwidth, height=0.85\textwidth]{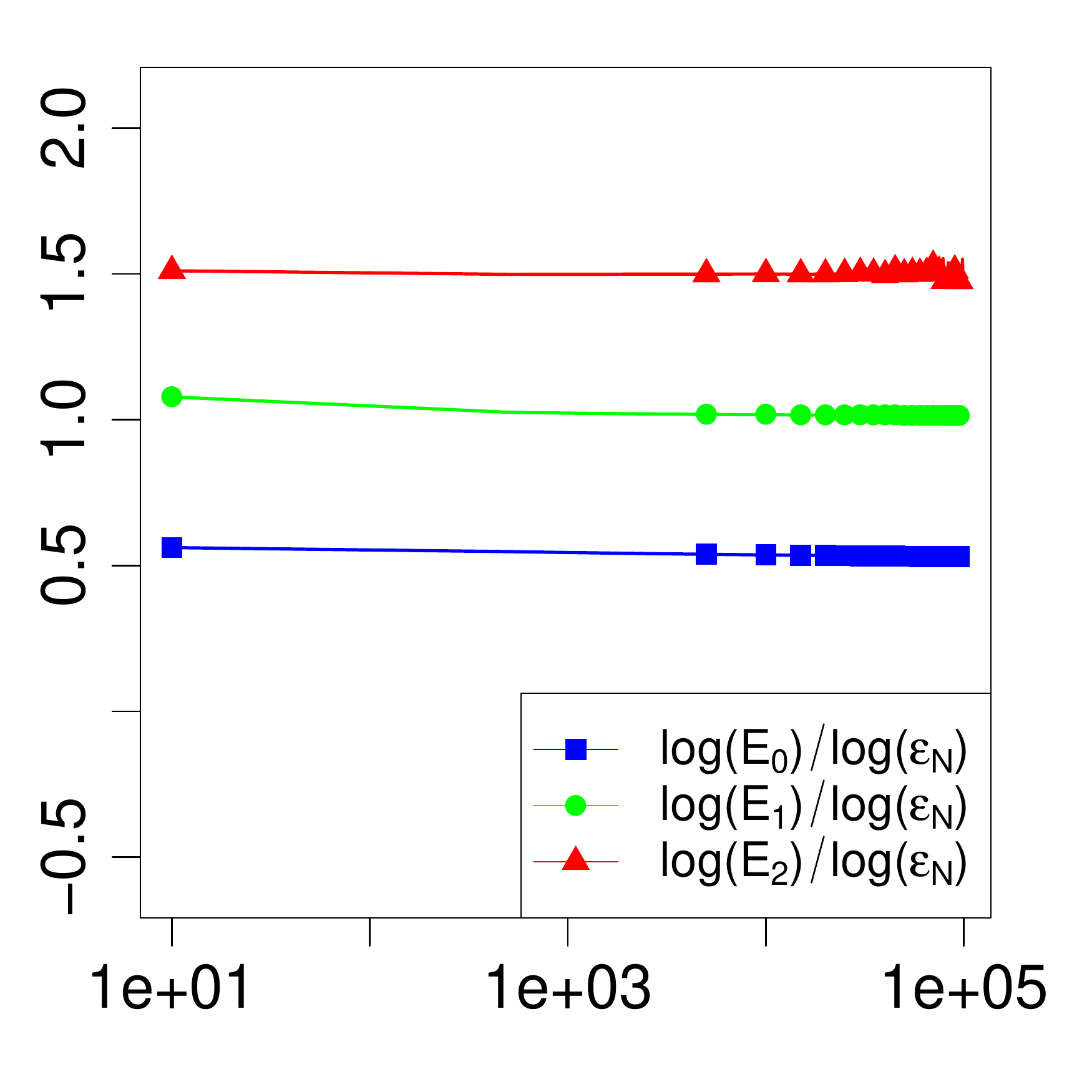}
            \vspace{-0.9cm}
            \caption{$\bb{\alpha} = (2,2)$ and $\beta = 2$}
        \end{subfigure}
        \quad
        \begin{subfigure}[b]{0.22\textwidth}
            \centering
            \includegraphics[width=\textwidth, height=0.85\textwidth]{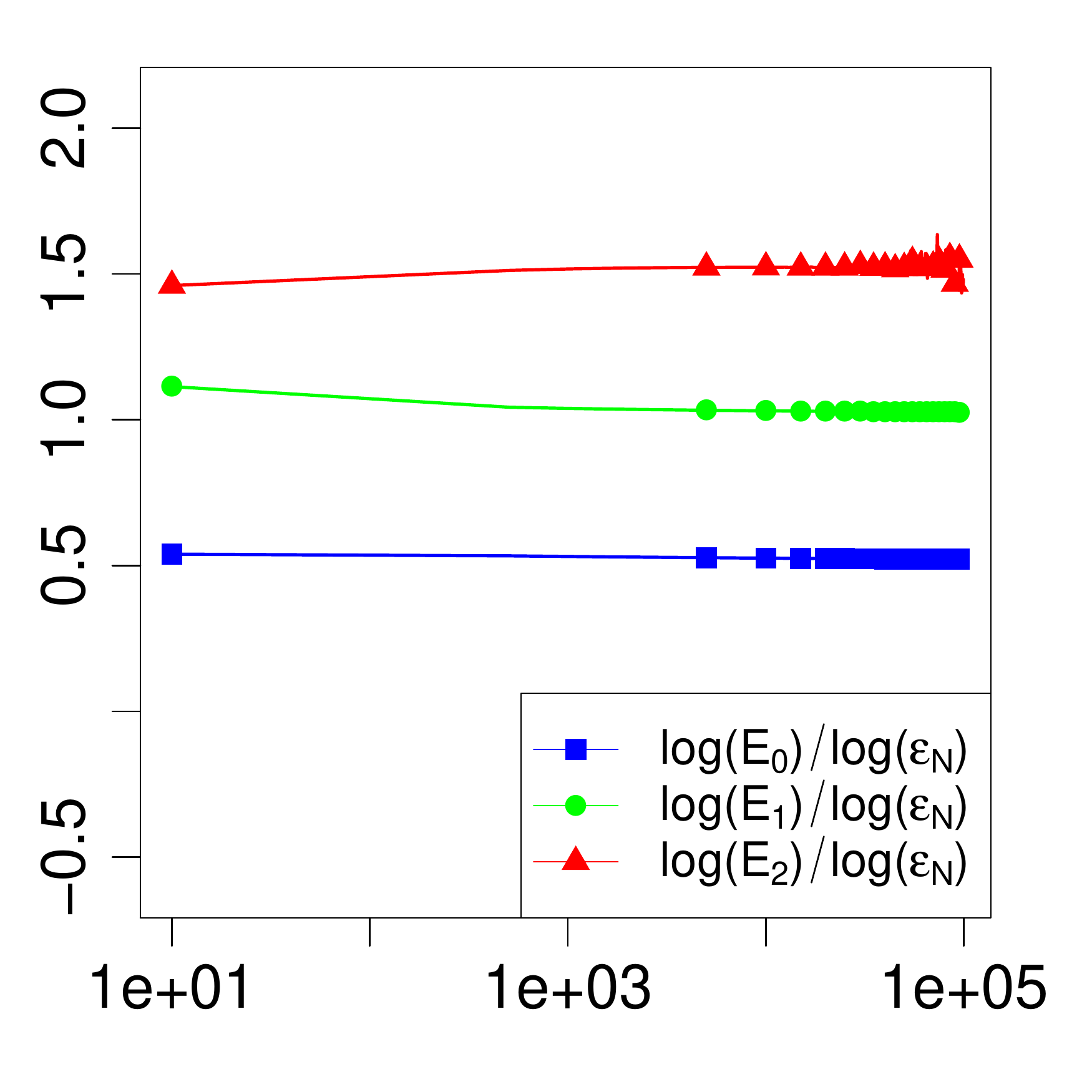}
            \vspace{-0.9cm}
            \caption{$\bb{\alpha} = (2,3)$ and $\beta = 2$}
        \end{subfigure}
        \quad
        \begin{subfigure}[b]{0.22\textwidth}
            \centering
            \includegraphics[width=\textwidth, height=0.85\textwidth]{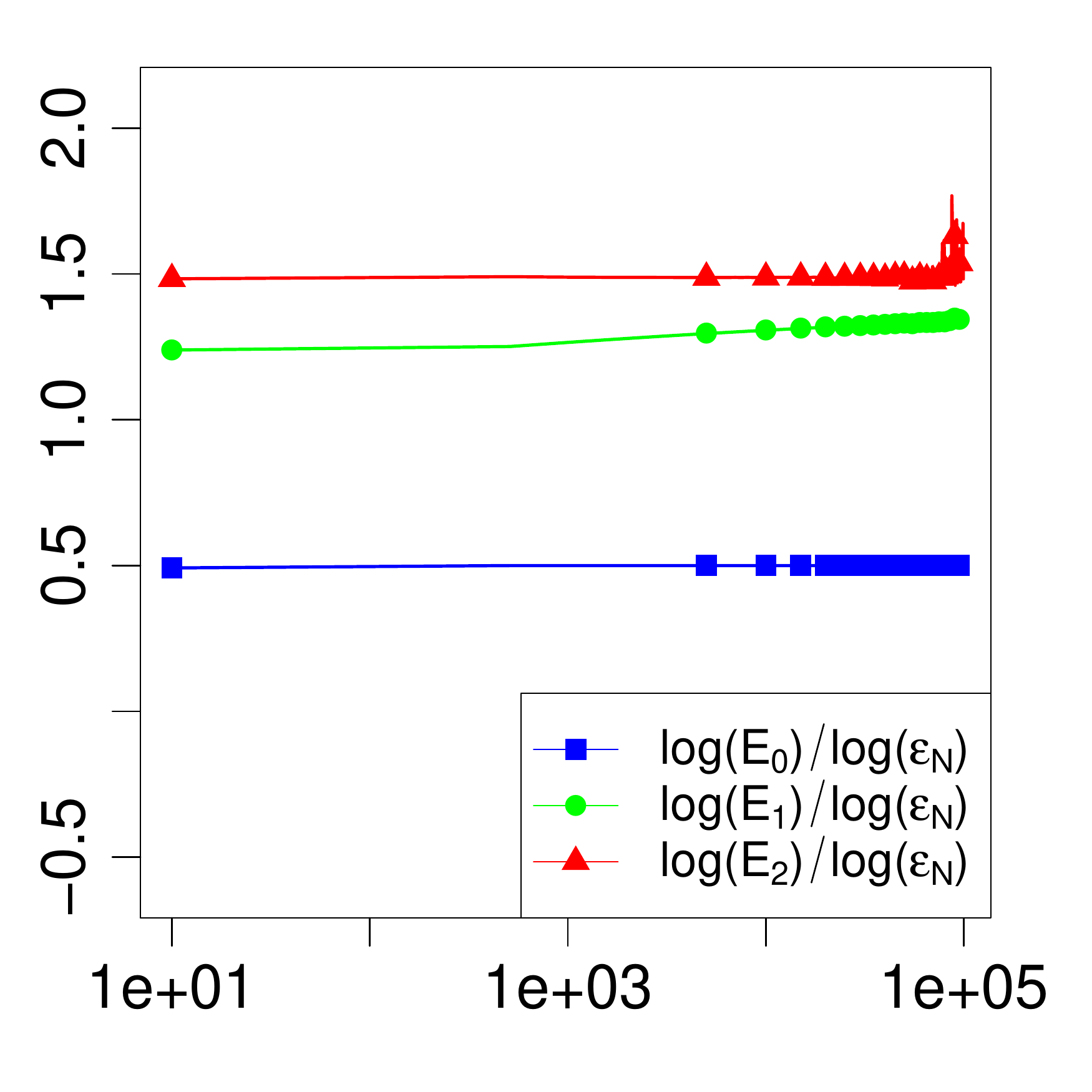}
            \vspace{-0.9cm}
            \caption{$\bb{\alpha} = (2,4)$ and $\beta = 2$}
        \end{subfigure}
        \begin{subfigure}[b]{0.22\textwidth}
            \centering
            \includegraphics[width=\textwidth, height=0.85\textwidth]{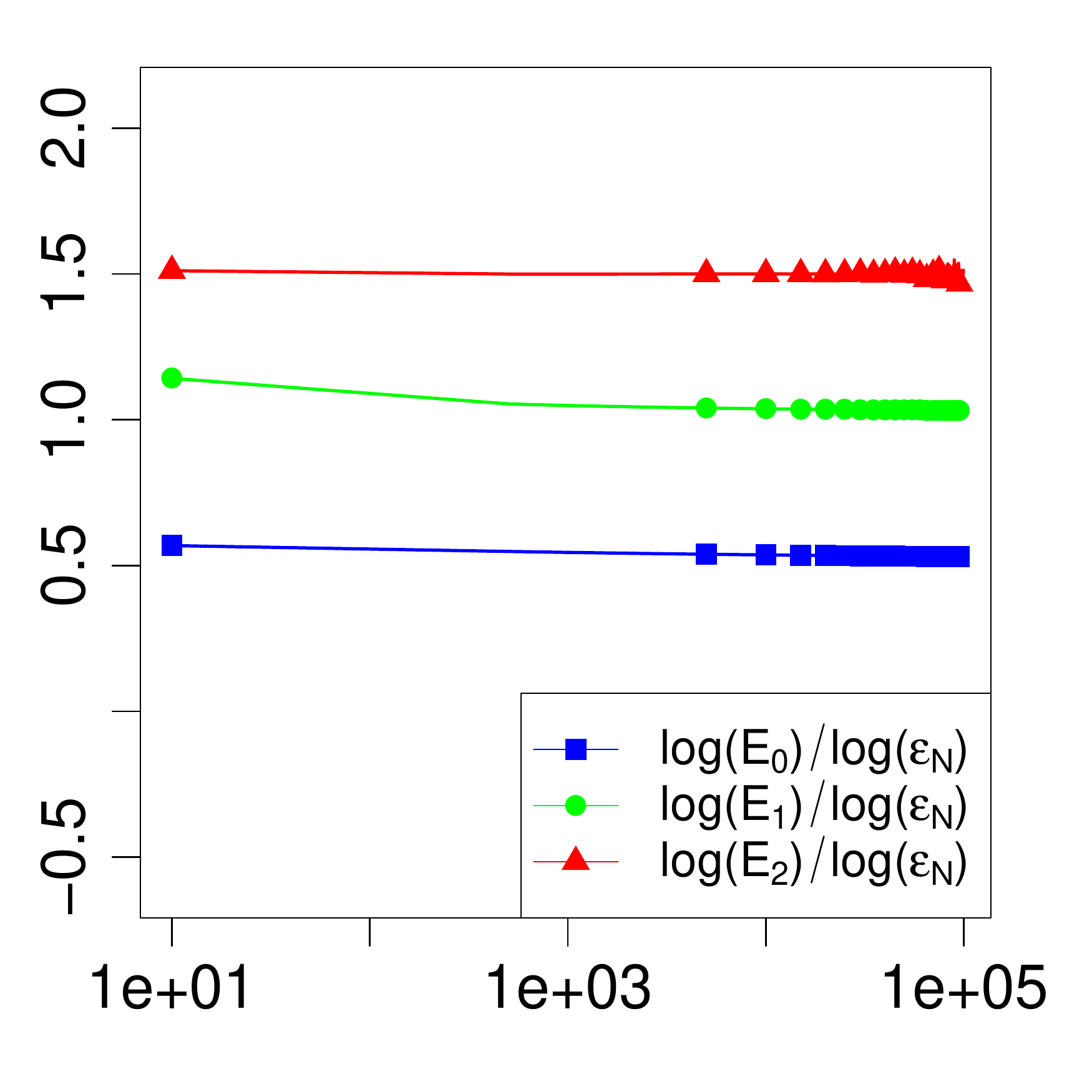}
            \vspace{-0.9cm}
            \caption{$\bb{\alpha} = (3,1)$ and $\beta = 2$}
        \end{subfigure}
        \quad
        \begin{subfigure}[b]{0.22\textwidth}
            \centering
            \includegraphics[width=\textwidth, height=0.85\textwidth]{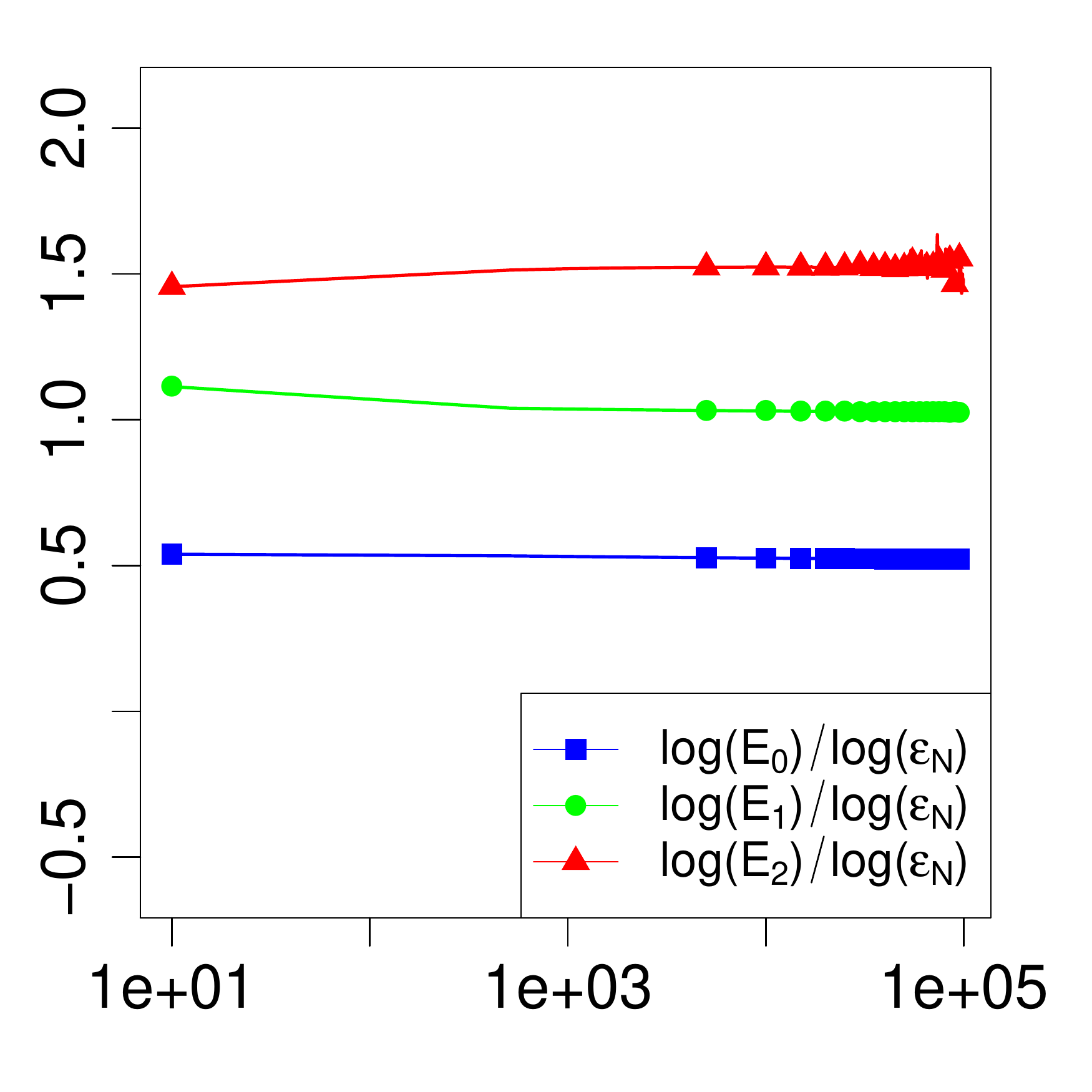}
            \vspace{-0.9cm}
            \caption{$\bb{\alpha} = (3,2)$ and $\beta = 2$}
        \end{subfigure}
        \quad
        \begin{subfigure}[b]{0.22\textwidth}
            \centering
            \includegraphics[width=\textwidth, height=0.85\textwidth]{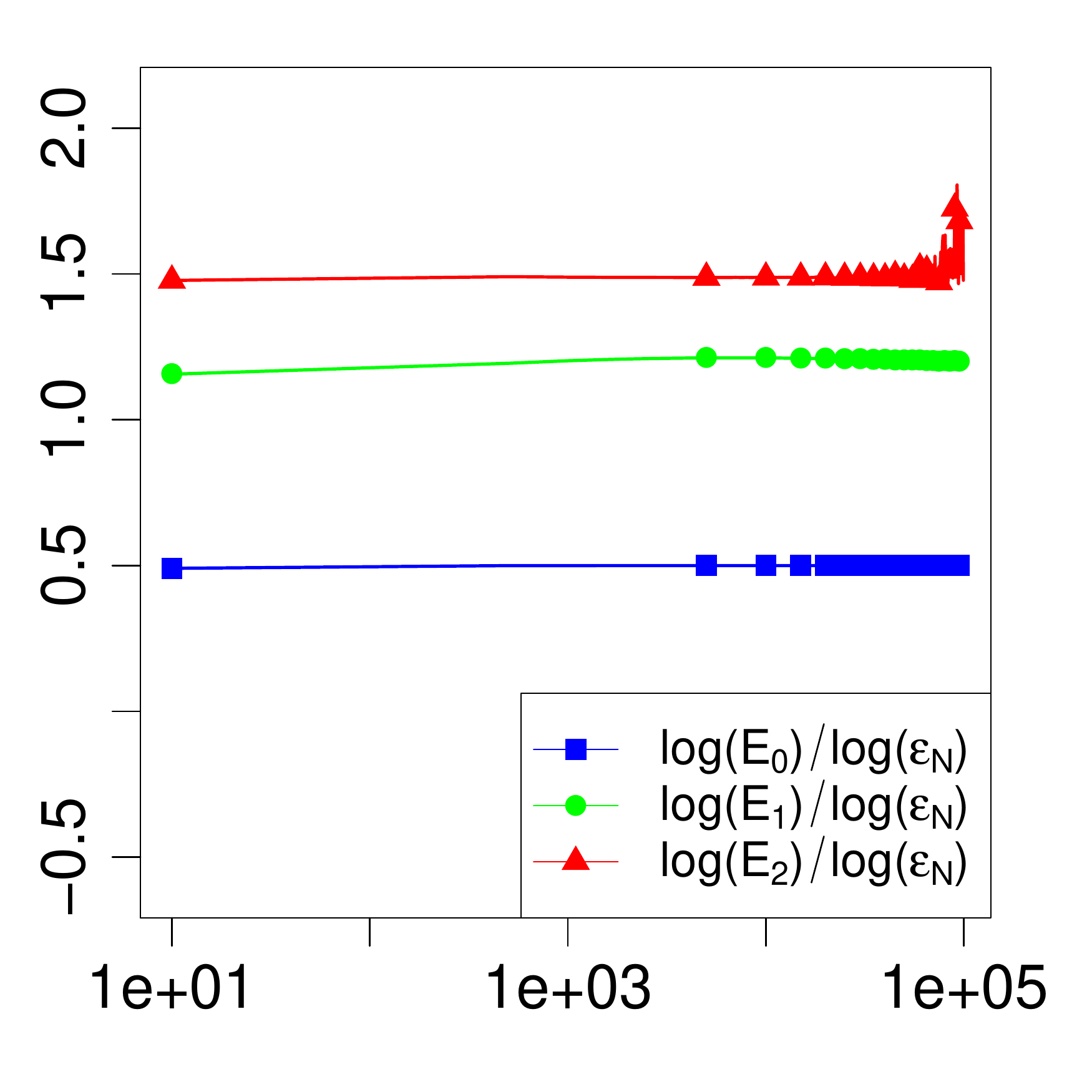}
            \vspace{-0.9cm}
            \caption{$\bb{\alpha} = (3,3)$ and $\beta = 2$}
        \end{subfigure}
        \quad
        \begin{subfigure}[b]{0.22\textwidth}
            \centering
            \includegraphics[width=\textwidth, height=0.85\textwidth]{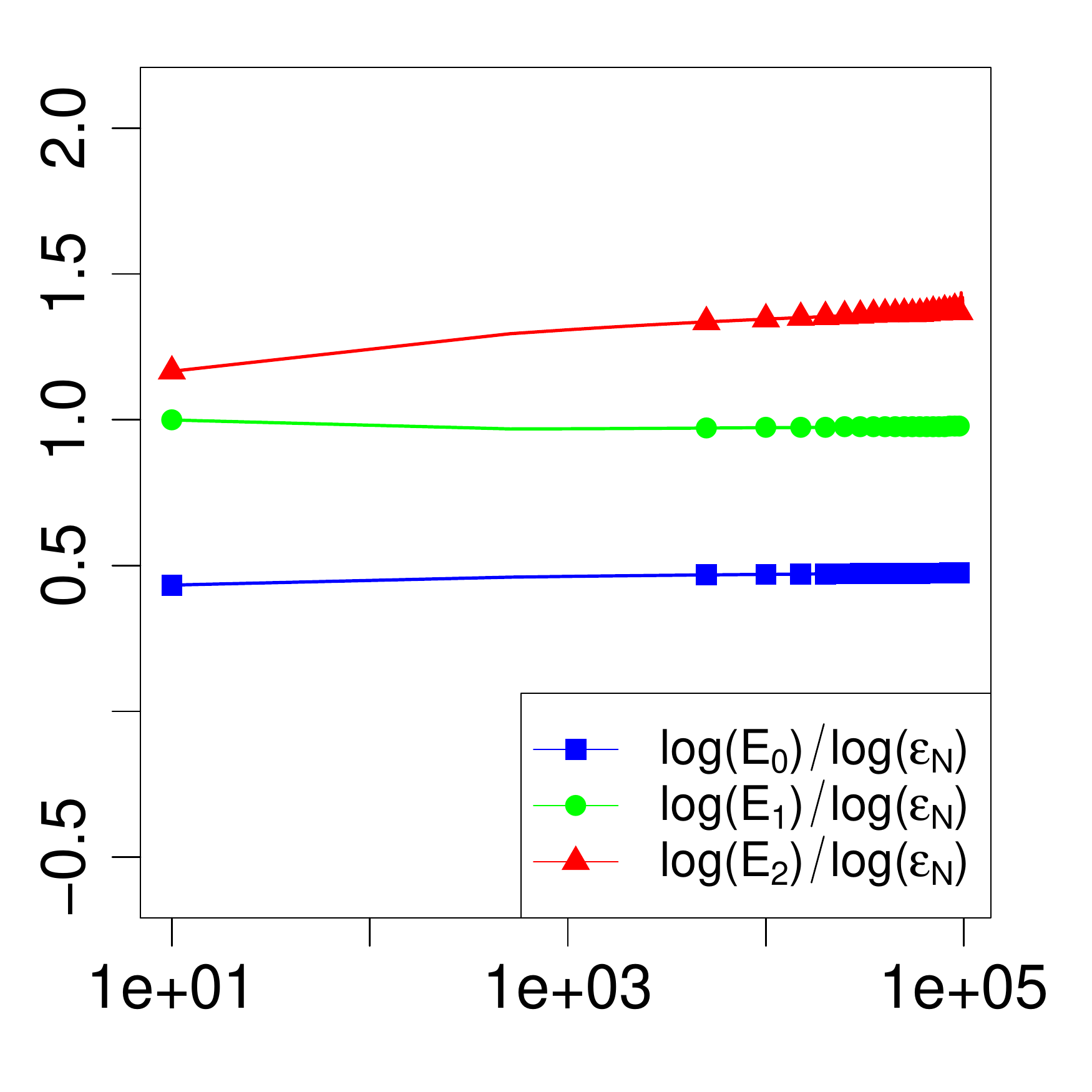}
            \vspace{-0.9cm}
            \caption{$\bb{\alpha} = (3,4)$ and $\beta = 2$}
        \end{subfigure}
        \caption{Plots of $\log E_i / \log \e_N$ as a function of $N$, for various choices of $\bb{\alpha}$, when $\beta = 2$. The horizontal axis is on a logarithmic scale. The plots confirm \eqref{eq:liminf.exponent.bound} and bring strong evidence for the validity of Theorem~\ref{thm:p.k.expansion}.}
        \label{fig:error.exponents.plots.beta.2}
    \end{figure}

    \phantom{vertical spacing}
    \begin{figure}[H]
        \captionsetup[subfigure]{labelformat=empty}
        \captionsetup{width=0.8\linewidth}
        \vspace{-0.5cm}
        \centering
        \begin{subfigure}[b]{0.22\textwidth}
            \centering
            \includegraphics[width=\textwidth, height=0.85\textwidth]{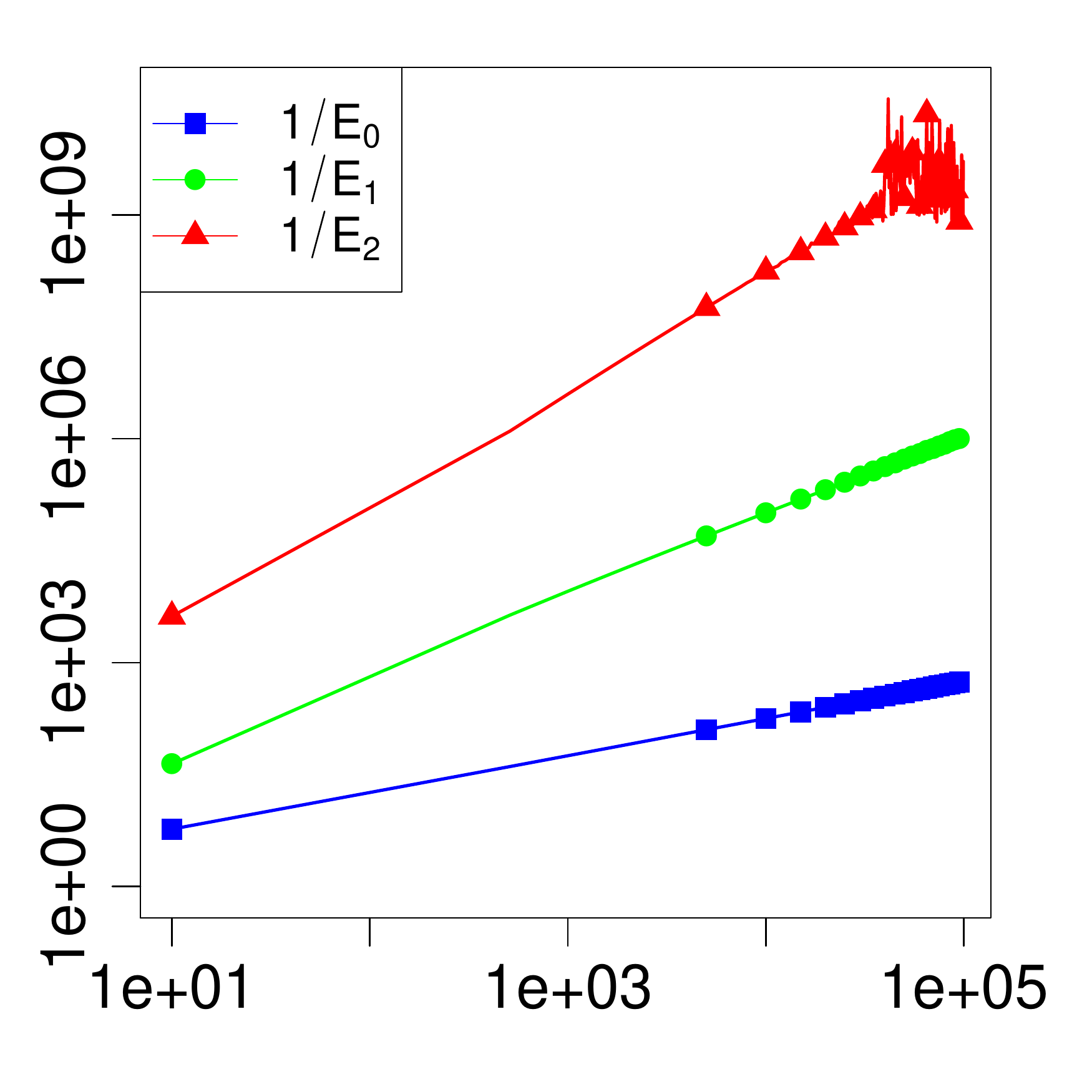}
            \vspace{-0.9cm}
            \caption{$\bb{\alpha} = (1,1)$ and $\beta = 3$}
        \end{subfigure}
        \quad
        \begin{subfigure}[b]{0.22\textwidth}
            \centering
            \includegraphics[width=\textwidth, height=0.85\textwidth]{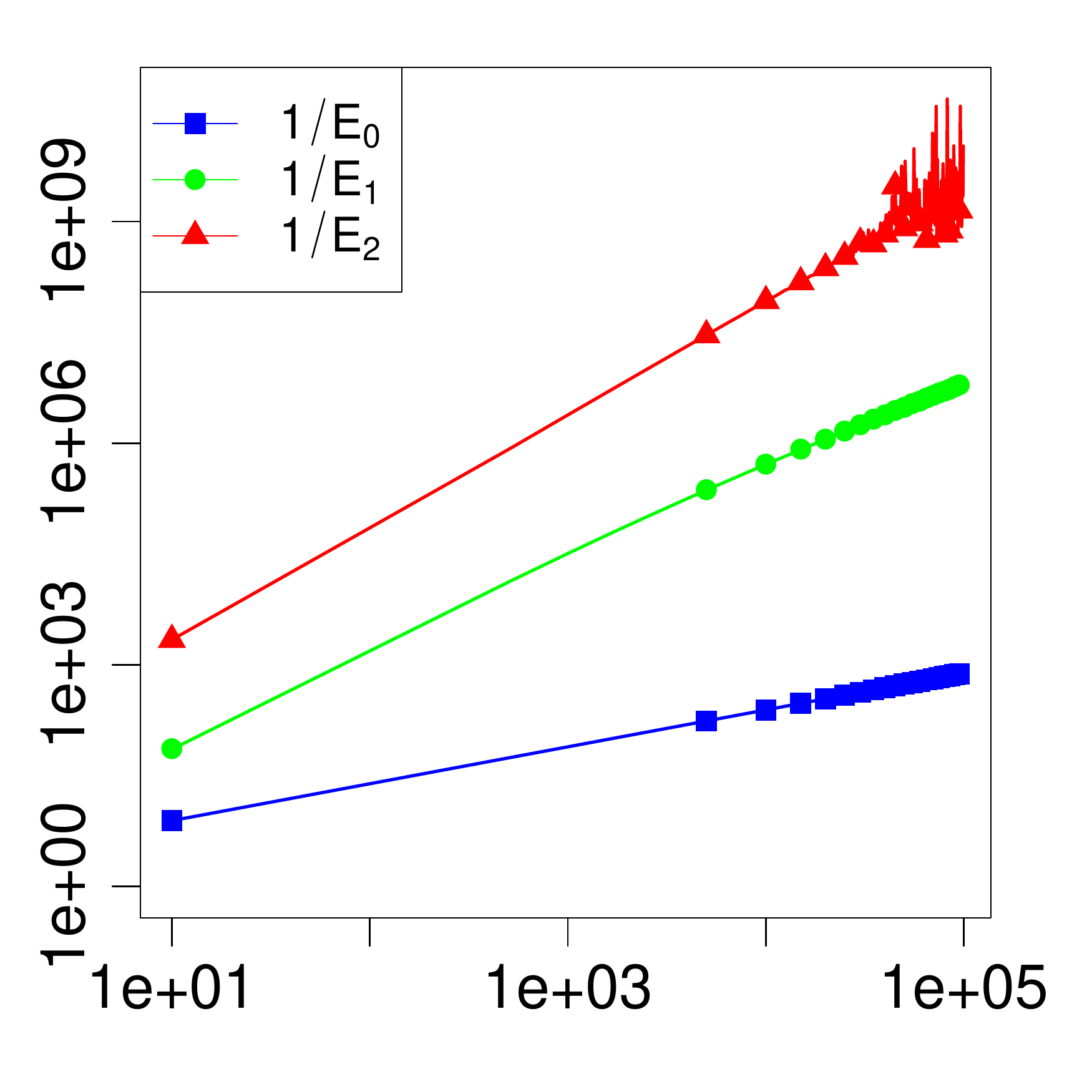}
            \vspace{-0.9cm}
            \caption{$\bb{\alpha} = (1,2)$ and $\beta = 3$}
        \end{subfigure}
        \quad
        \begin{subfigure}[b]{0.22\textwidth}
            \centering
            \includegraphics[width=\textwidth, height=0.85\textwidth]{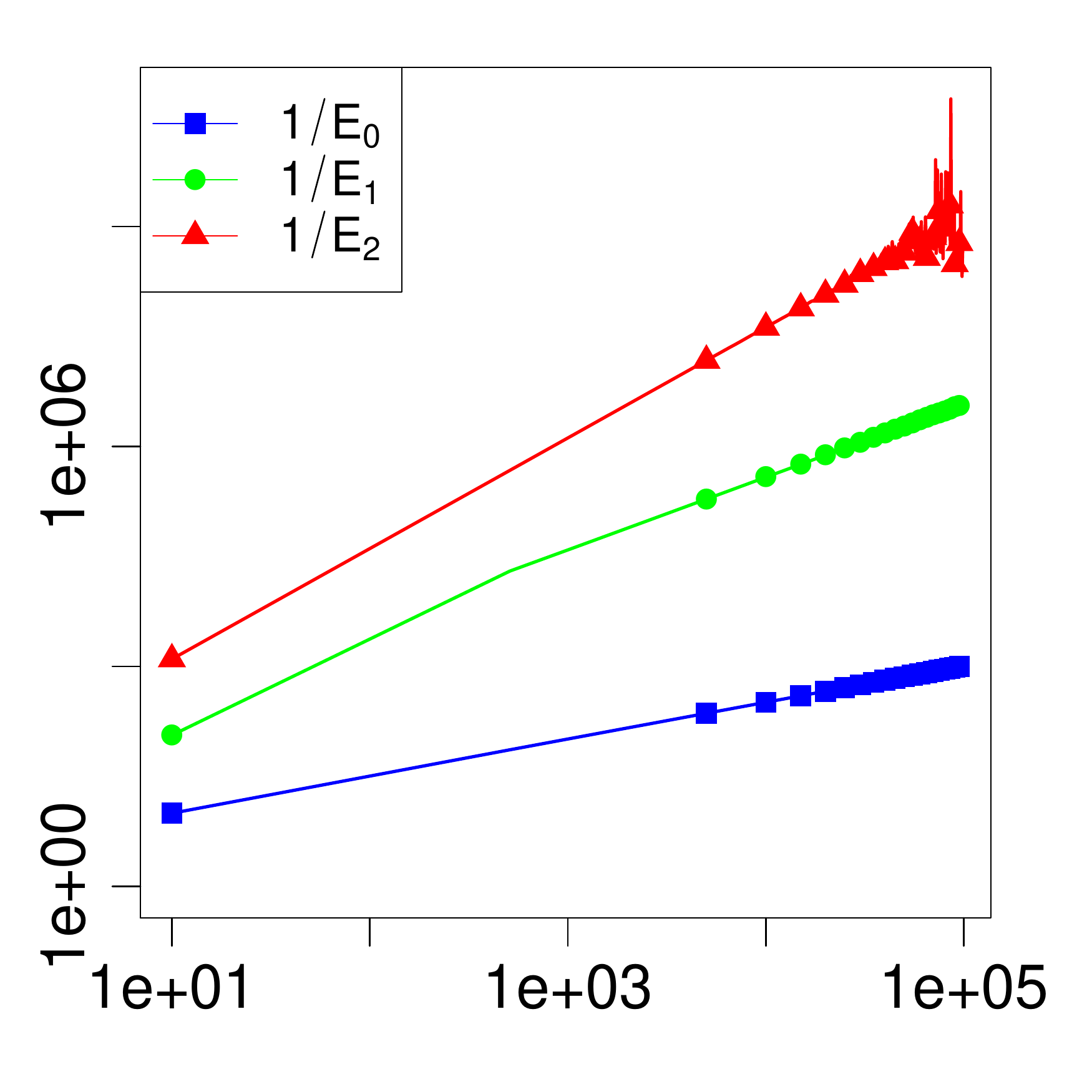}
            \vspace{-0.9cm}
            \caption{$\bb{\alpha} = (1,3)$ and $\beta = 3$}
        \end{subfigure}
        \quad
        \begin{subfigure}[b]{0.22\textwidth}
            \centering
            \includegraphics[width=\textwidth, height=0.85\textwidth]{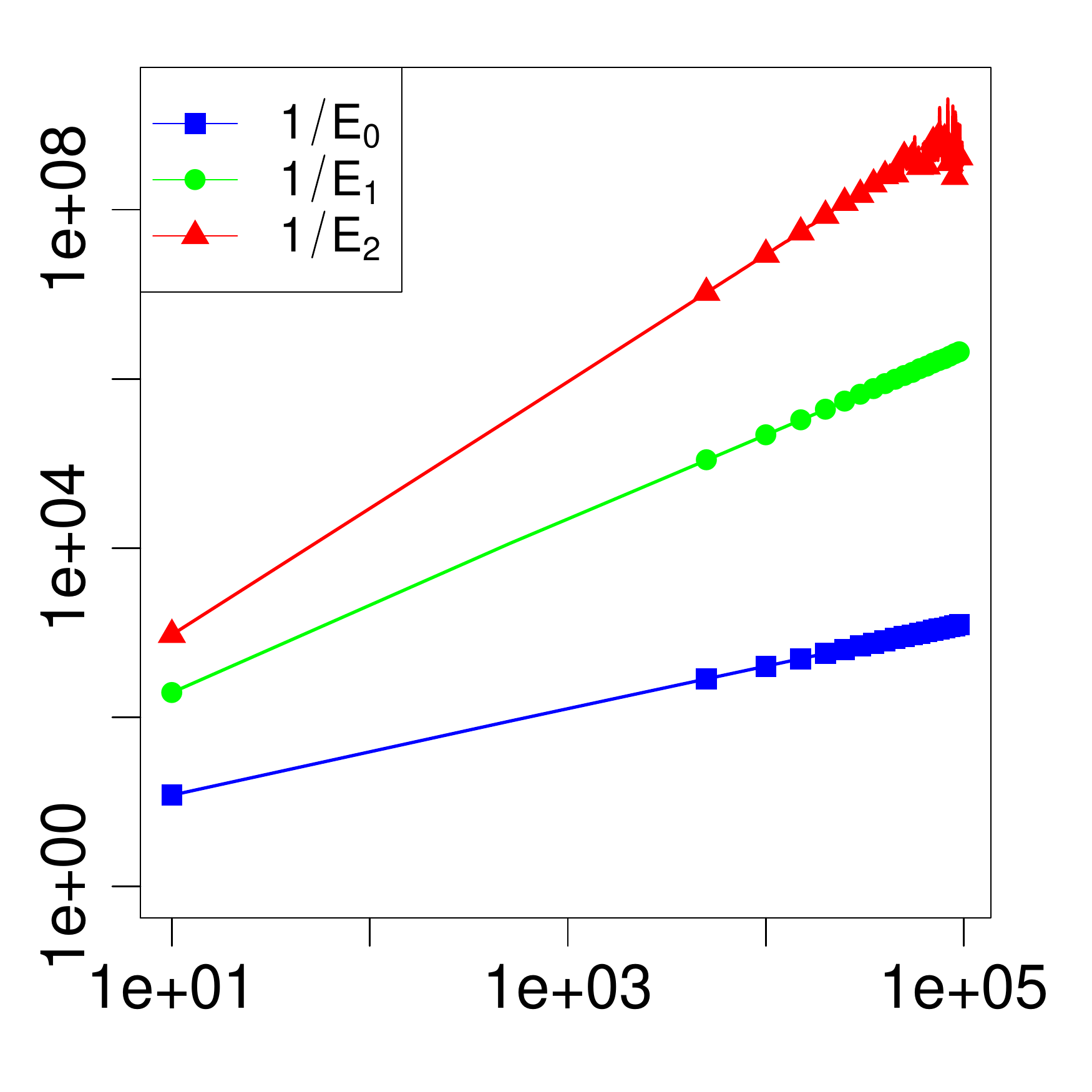}
            \vspace{-0.9cm}
            \caption{$\bb{\alpha} = (1,4)$ and $\beta = 3$}
        \end{subfigure}
        \begin{subfigure}[b]{0.22\textwidth}
            \centering
            \includegraphics[width=\textwidth, height=0.85\textwidth]{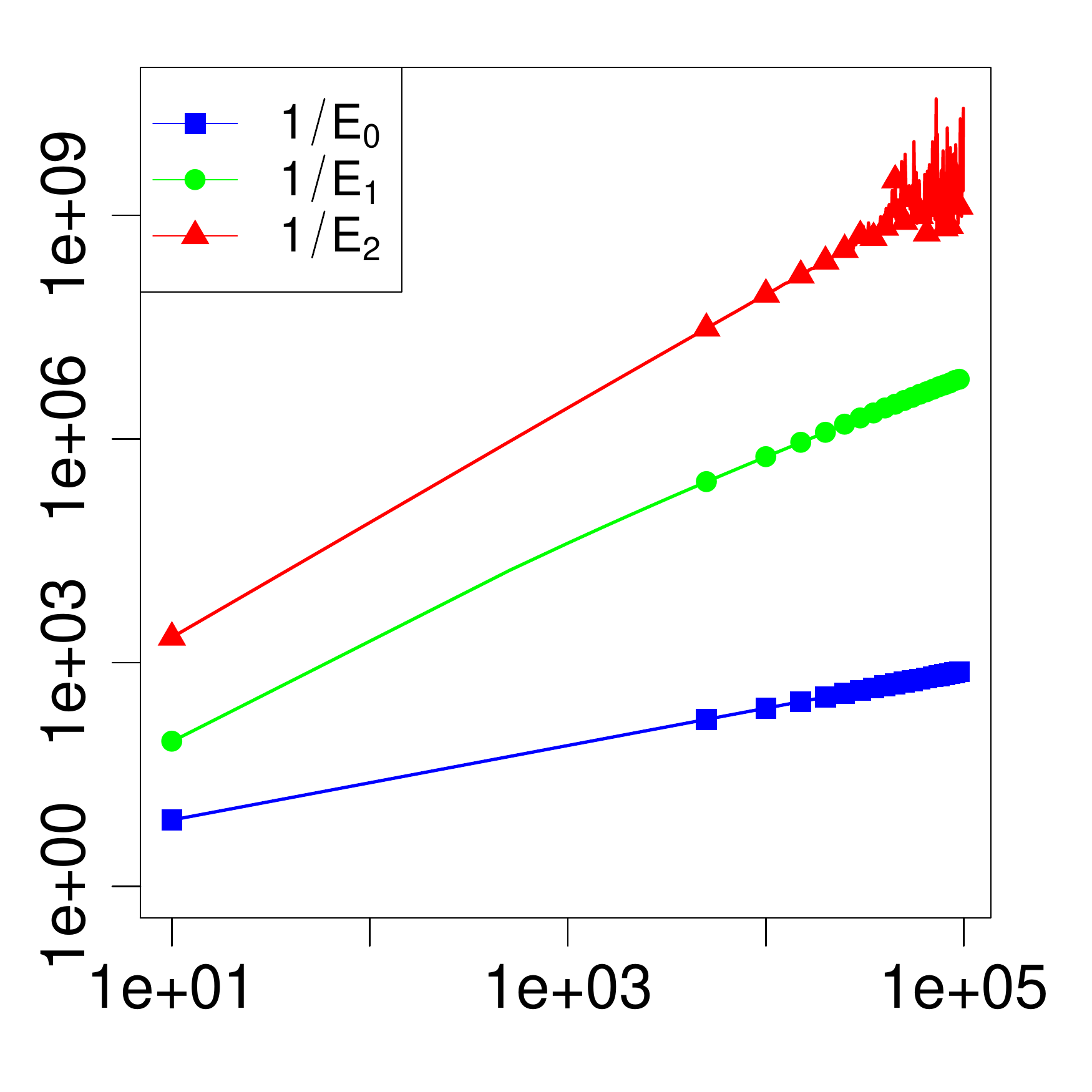}
            \vspace{-0.9cm}
            \caption{$\bb{\alpha} = (2,1)$ and $\beta = 3$}
        \end{subfigure}
        \quad
        \begin{subfigure}[b]{0.22\textwidth}
            \centering
            \includegraphics[width=\textwidth, height=0.85\textwidth]{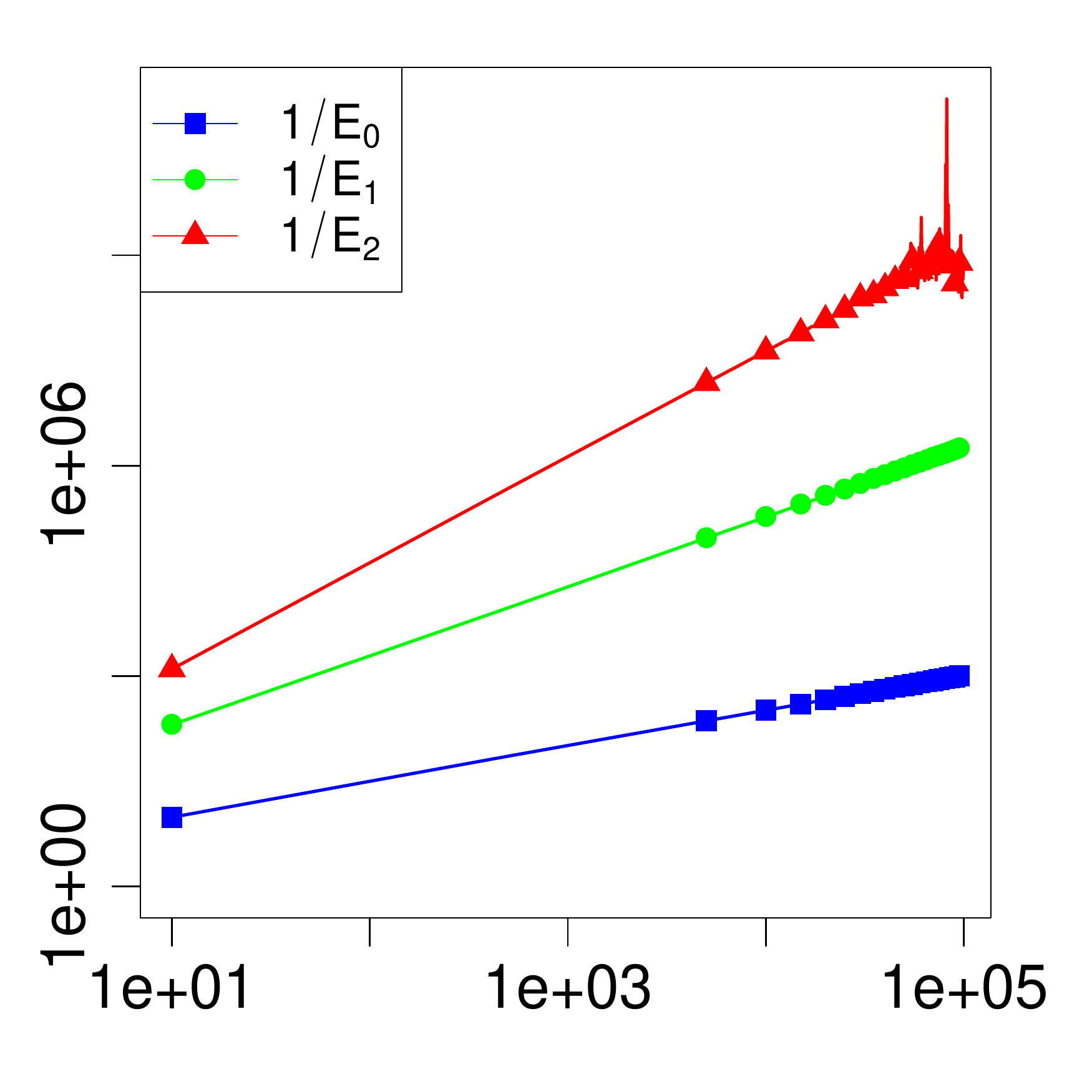}
            \vspace{-0.9cm}
            \caption{$\bb{\alpha} = (2,2)$ and $\beta = 3$}
        \end{subfigure}
        \quad
        \begin{subfigure}[b]{0.22\textwidth}
            \centering
            \includegraphics[width=\textwidth, height=0.85\textwidth]{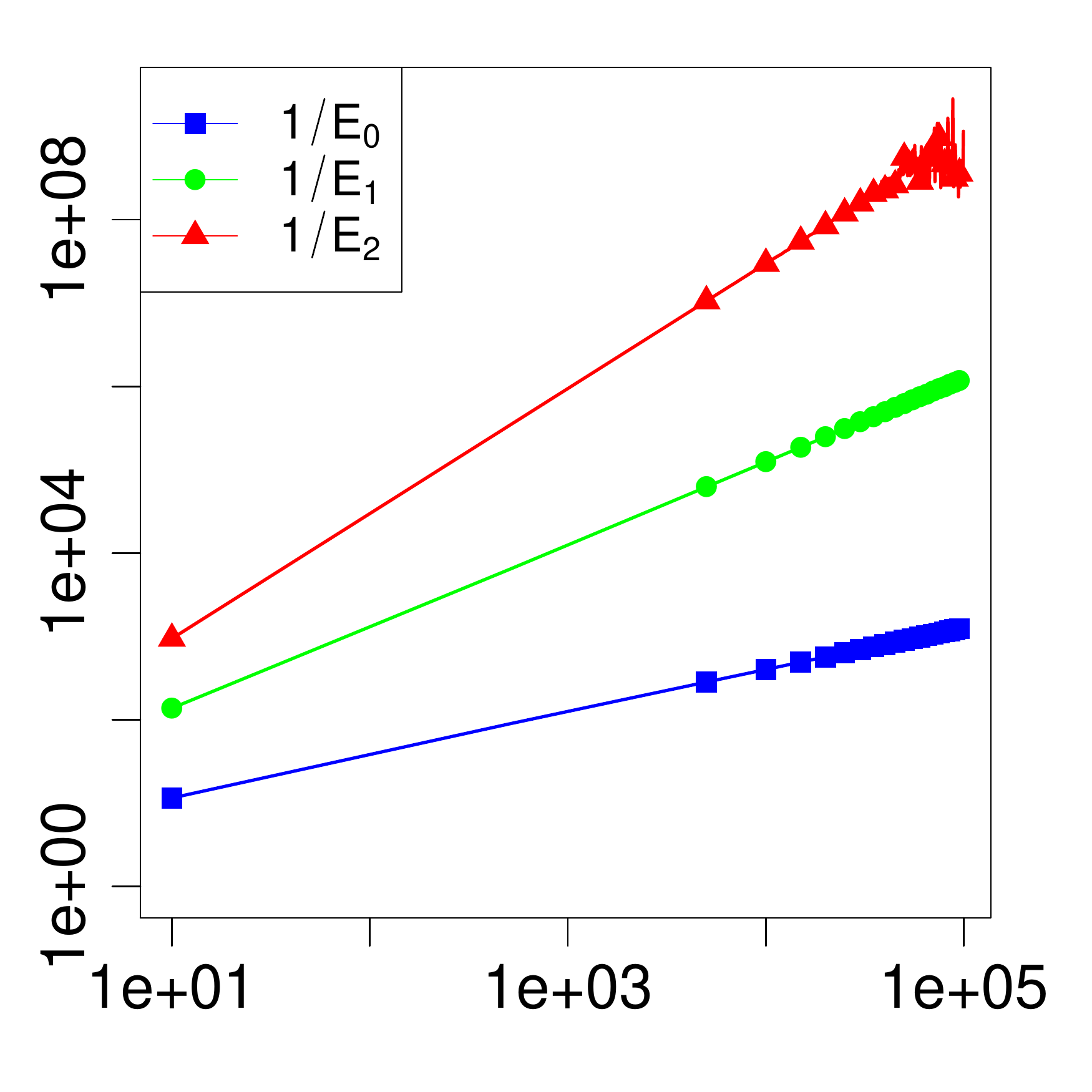}
            \vspace{-0.9cm}
            \caption{$\bb{\alpha} = (2,3)$ and $\beta = 3$}
        \end{subfigure}
        \quad
        \begin{subfigure}[b]{0.22\textwidth}
            \centering
            \includegraphics[width=\textwidth, height=0.85\textwidth]{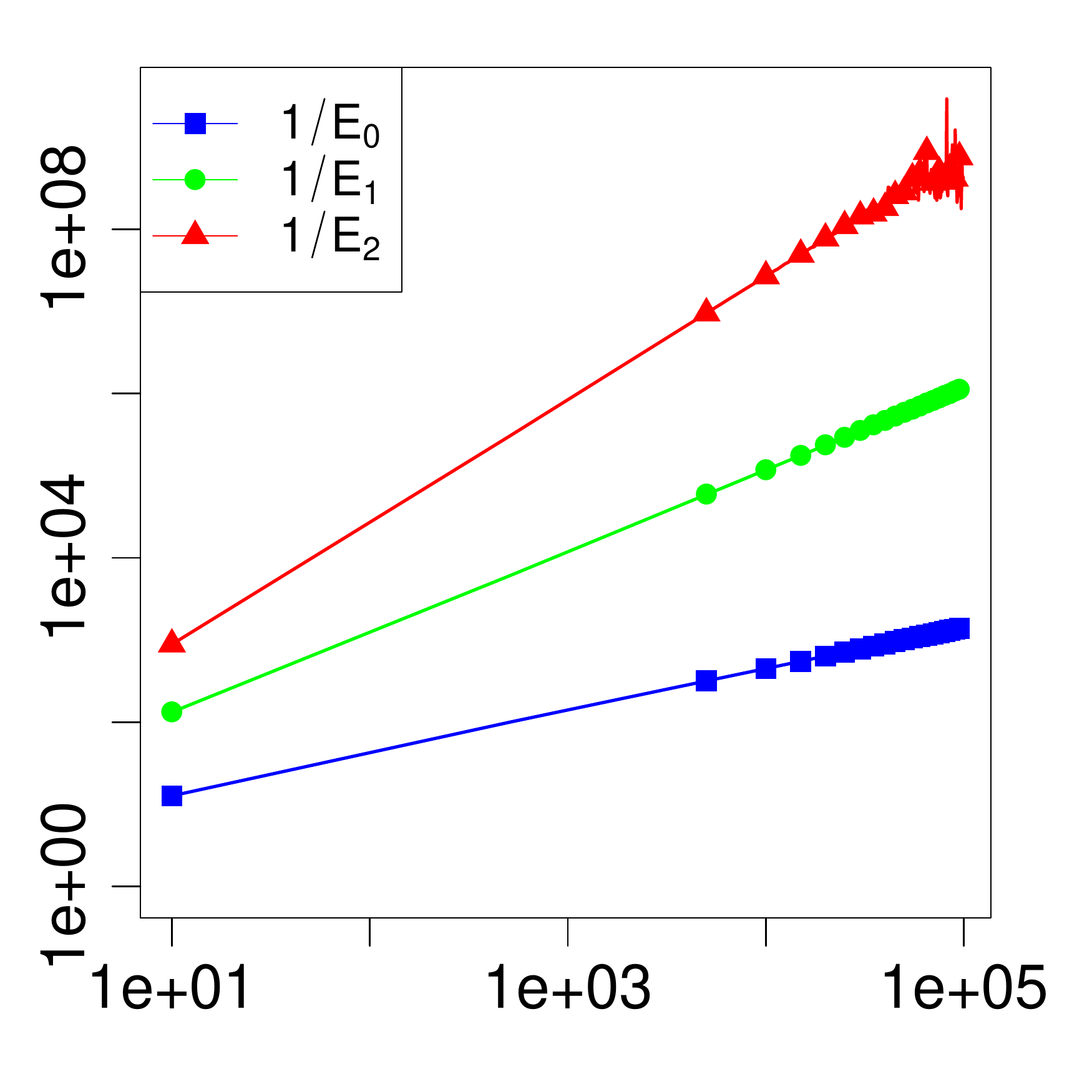}
            \vspace{-0.9cm}
            \caption{$\bb{\alpha} = (2,4)$ and $\beta = 3$}
        \end{subfigure}
        \begin{subfigure}[b]{0.22\textwidth}
            \centering
            \includegraphics[width=\textwidth, height=0.85\textwidth]{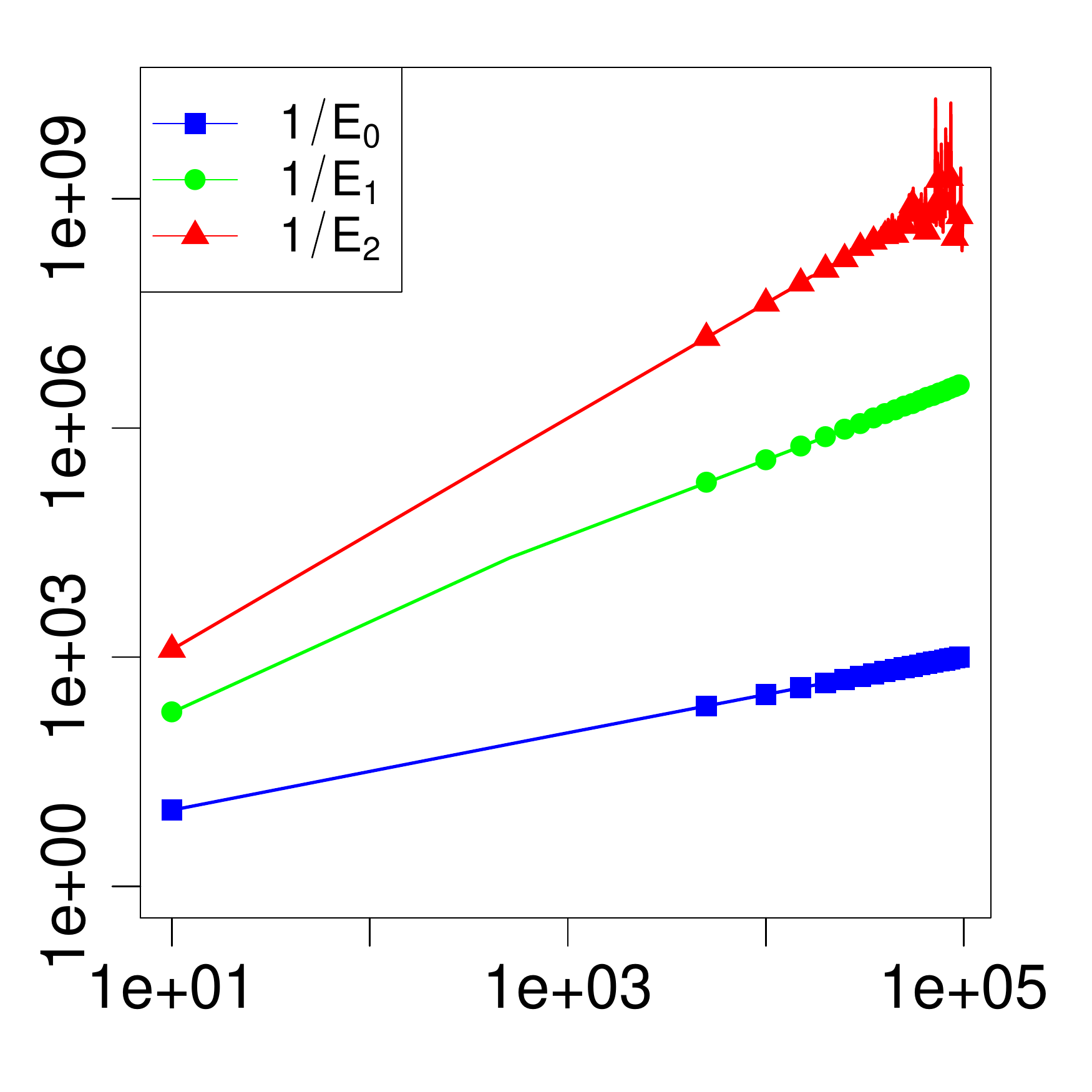}
            \vspace{-0.9cm}
            \caption{$\bb{\alpha} = (3,1)$ and $\beta = 3$}
        \end{subfigure}
        \quad
        \begin{subfigure}[b]{0.22\textwidth}
            \centering
            \includegraphics[width=\textwidth, height=0.85\textwidth]{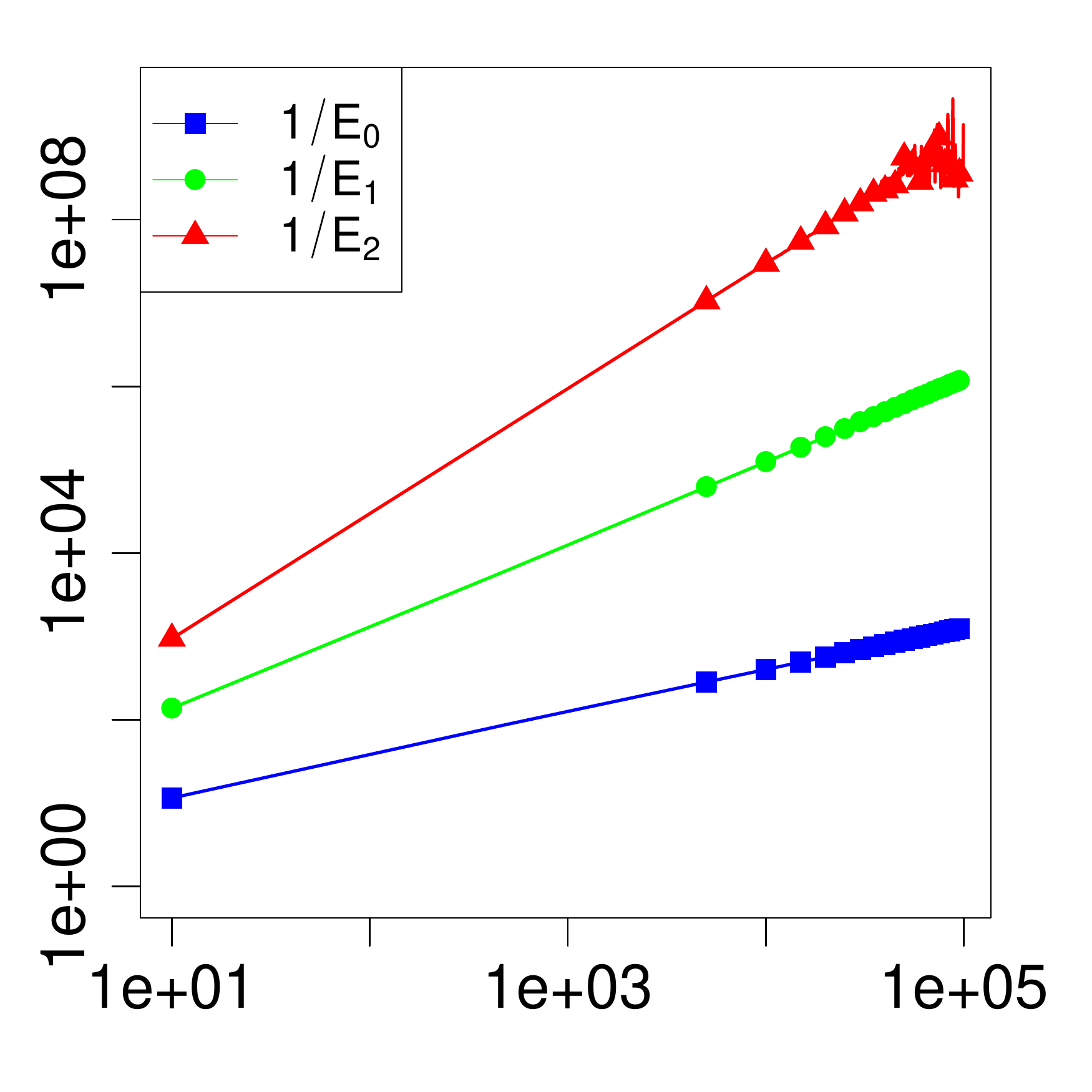}
            \vspace{-0.9cm}
            \caption{$\bb{\alpha} = (3,2)$ and $\beta = 3$}
        \end{subfigure}
        \quad
        \begin{subfigure}[b]{0.22\textwidth}
            \centering
            \includegraphics[width=\textwidth, height=0.85\textwidth]{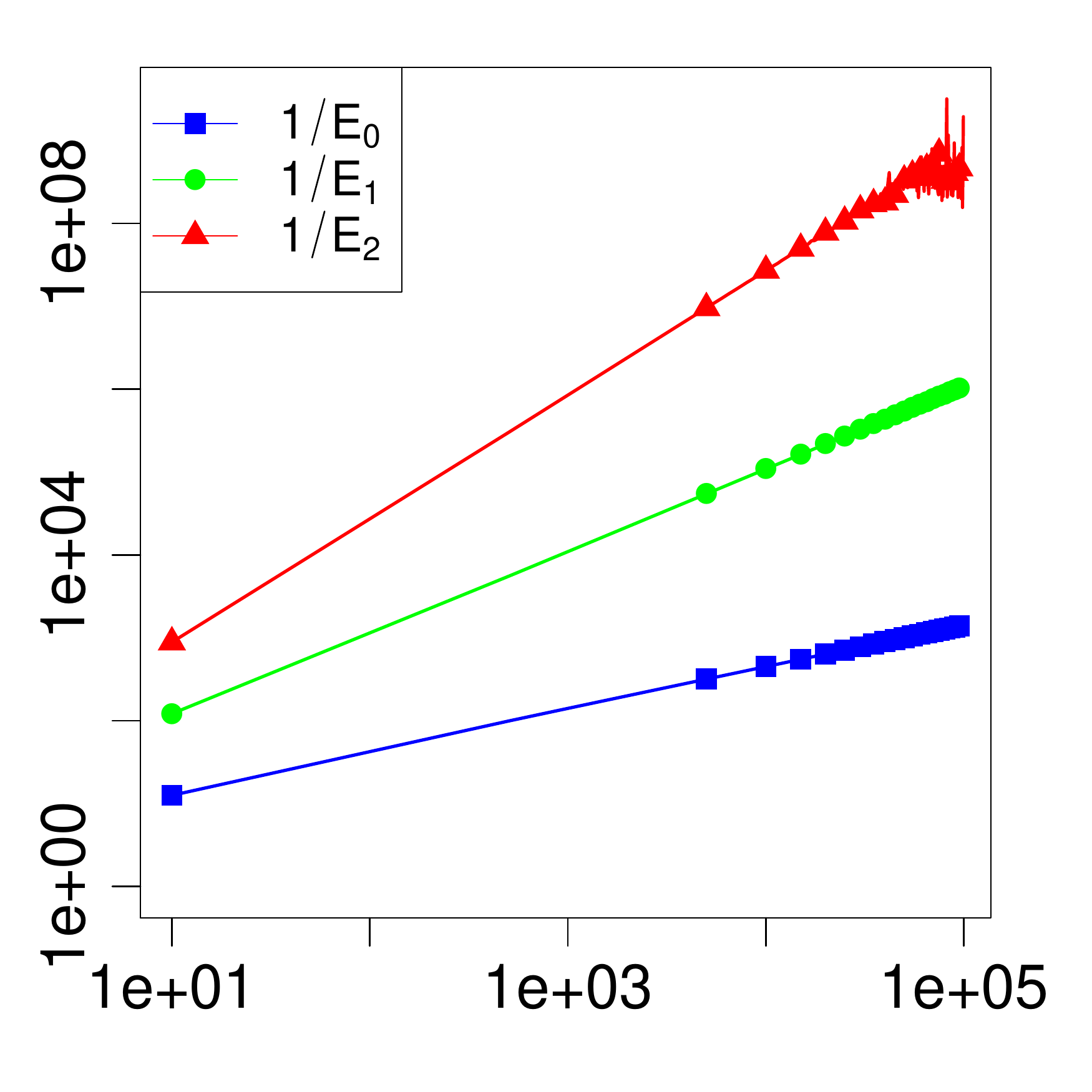}
            \vspace{-0.9cm}
            \caption{$\bb{\alpha} = (3,3)$ and $\beta = 3$}
        \end{subfigure}
        \quad
        \begin{subfigure}[b]{0.22\textwidth}
            \centering
            \includegraphics[width=\textwidth, height=0.85\textwidth]{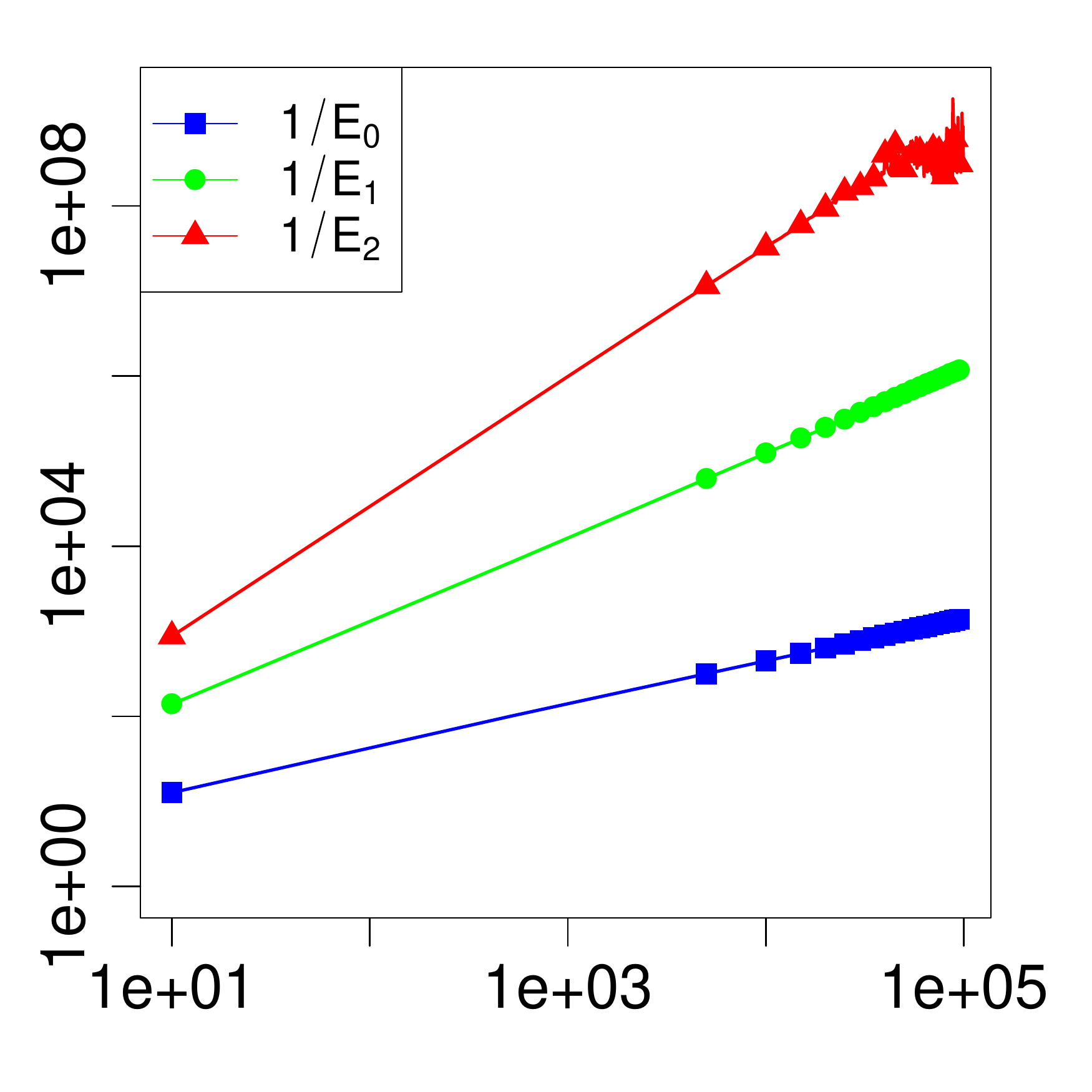}
            \vspace{-0.9cm}
            \caption{$\bb{\alpha} = (3,4)$ and $\beta = 3$}
        \end{subfigure}
        \caption{Plots of $1 / E_i$ as a function of $N$, for various choices of $\bb{\alpha}$, when $\beta = 3$. Both the horizontal and vertical axes are on a logarithmic scale. The plots clearly illustrate how the addition of correction terms from Theorem~\ref{thm:p.k.expansion} to the base approximation \eqref{eq:E.0} improves it.}
        \label{fig:loglog.errors.plots.beta.3}
    \end{figure}
    \begin{figure}[H]
        \captionsetup[subfigure]{labelformat=empty}
        \captionsetup{width=0.8\linewidth}
        \vspace{-0.5cm}
        \centering
        \begin{subfigure}[b]{0.22\textwidth}
            \centering
            \includegraphics[width=\textwidth, height=0.85\textwidth]{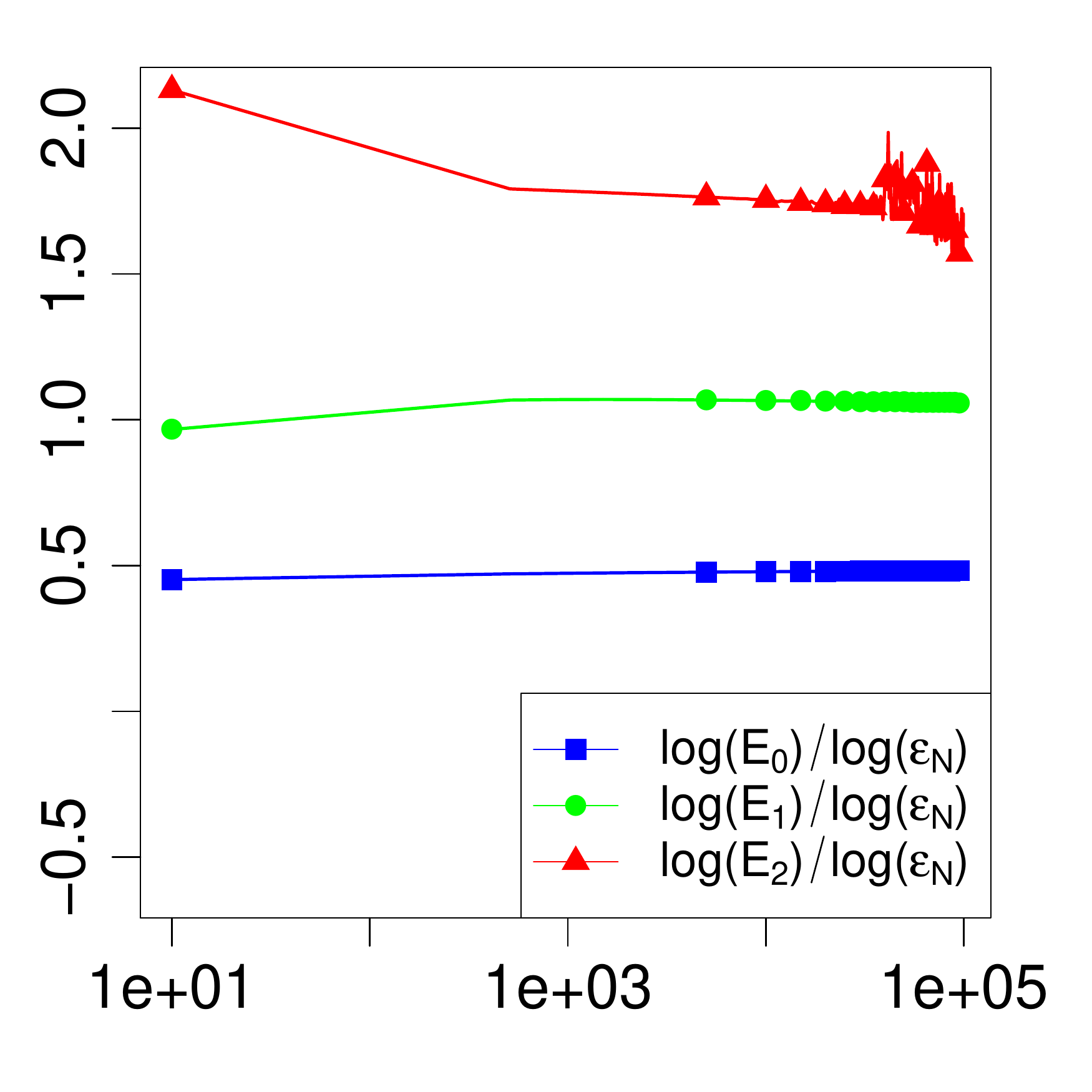}
            \vspace{-0.9cm}
            \caption{$\bb{\alpha} = (1,1)$ and $\beta = 3$}
        \end{subfigure}
        \quad
        \begin{subfigure}[b]{0.22\textwidth}
            \centering
            \includegraphics[width=\textwidth, height=0.85\textwidth]{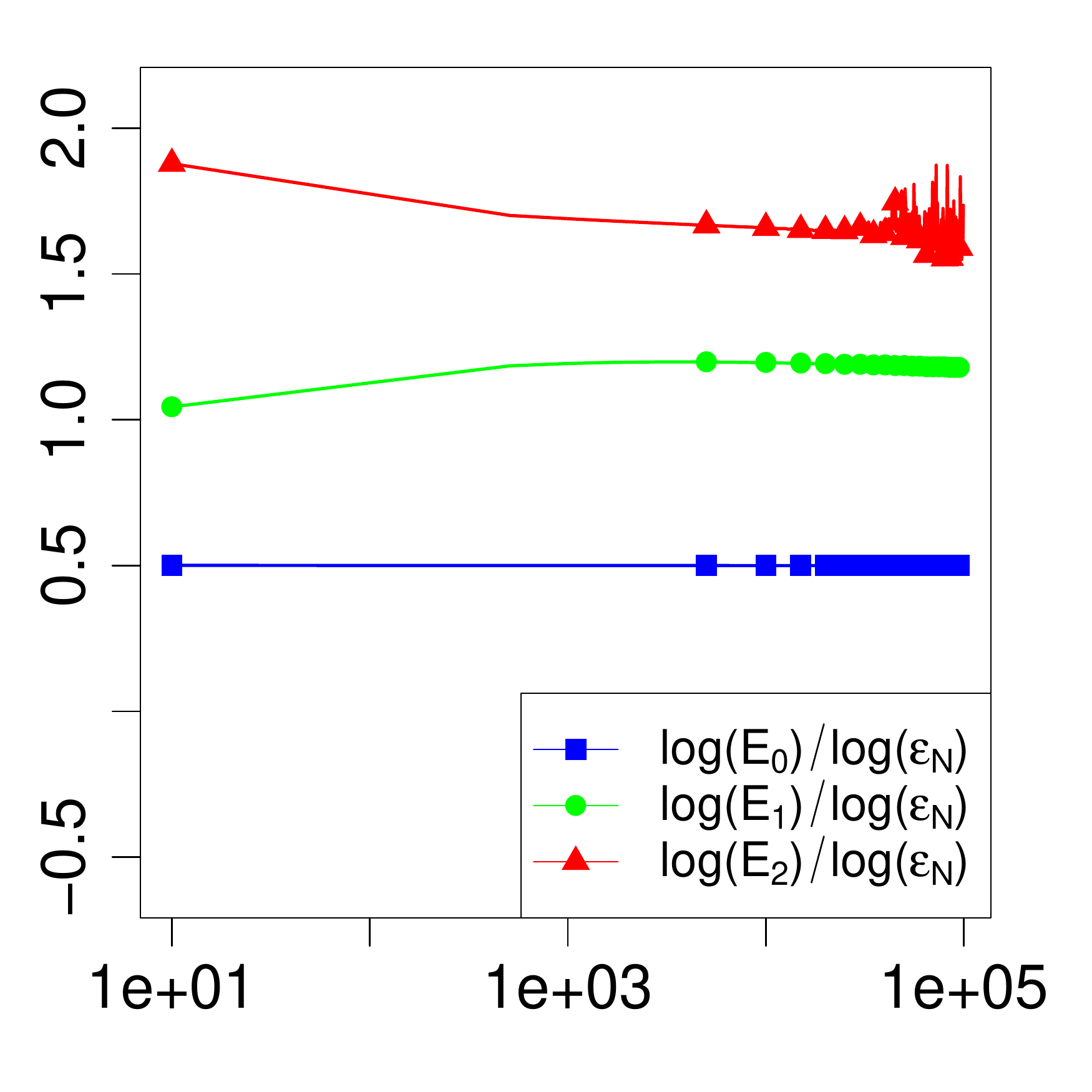}
            \vspace{-0.9cm}
            \caption{$\bb{\alpha} = (1,2)$ and $\beta = 3$}
        \end{subfigure}
        \quad
        \begin{subfigure}[b]{0.22\textwidth}
            \centering
            \includegraphics[width=\textwidth, height=0.85\textwidth]{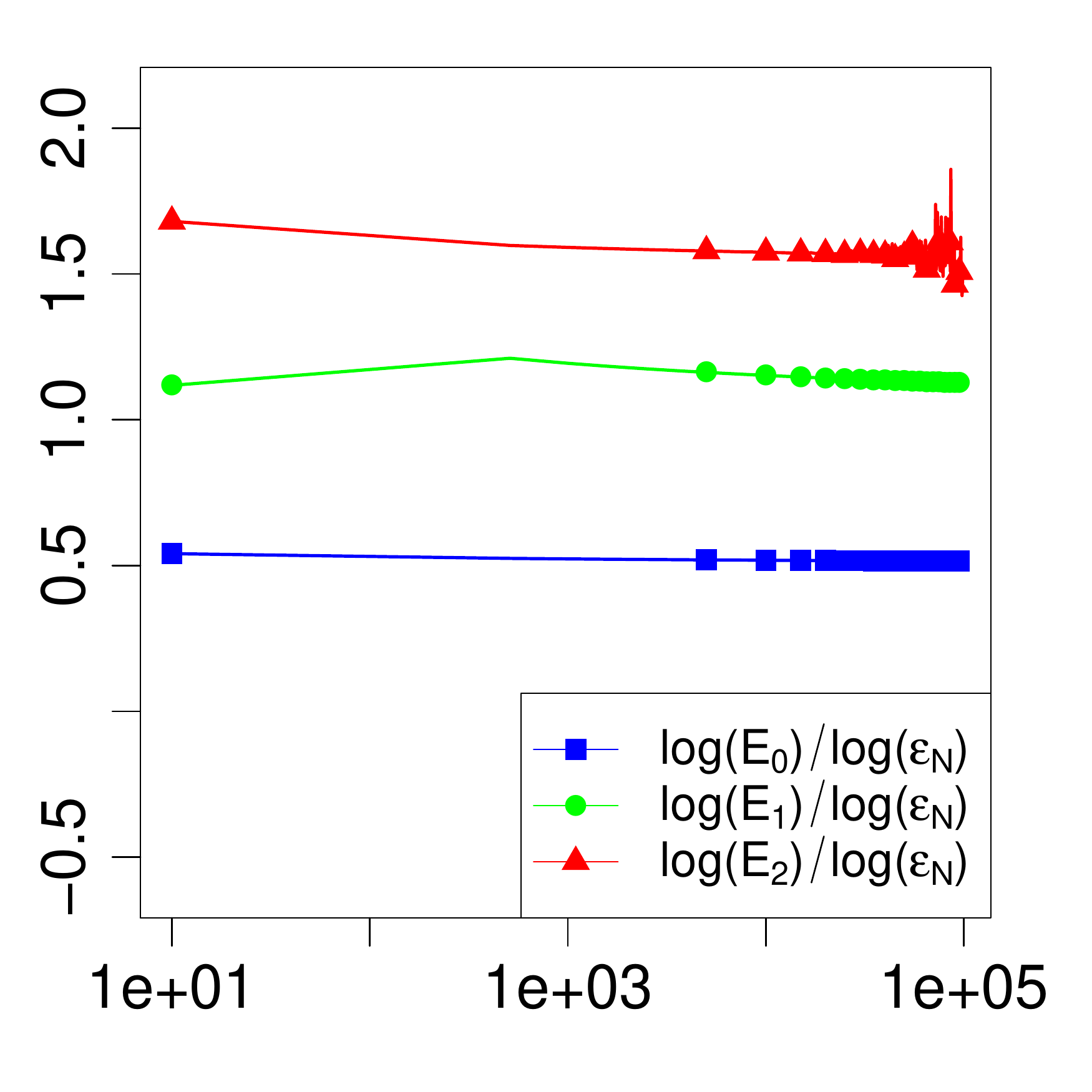}
            \vspace{-0.9cm}
            \caption{$\bb{\alpha} = (1,3)$ and $\beta = 3$}
        \end{subfigure}
        \quad
        \begin{subfigure}[b]{0.22\textwidth}
            \centering
            \includegraphics[width=\textwidth, height=0.85\textwidth]{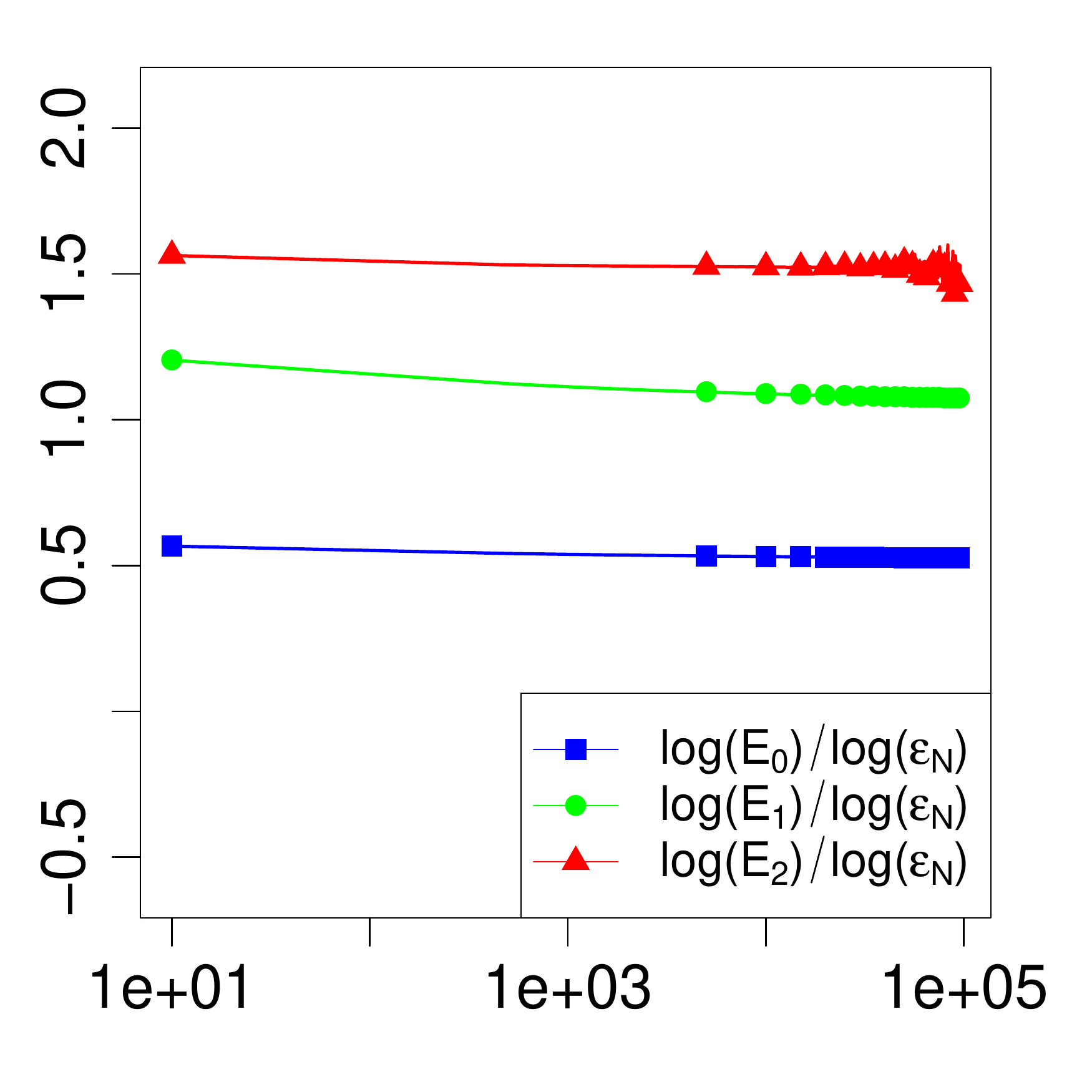}
            \vspace{-0.9cm}
            \caption{$\bb{\alpha} = (1,4)$ and $\beta = 3$}
        \end{subfigure}
        \begin{subfigure}[b]{0.22\textwidth}
            \centering
            \includegraphics[width=\textwidth, height=0.85\textwidth]{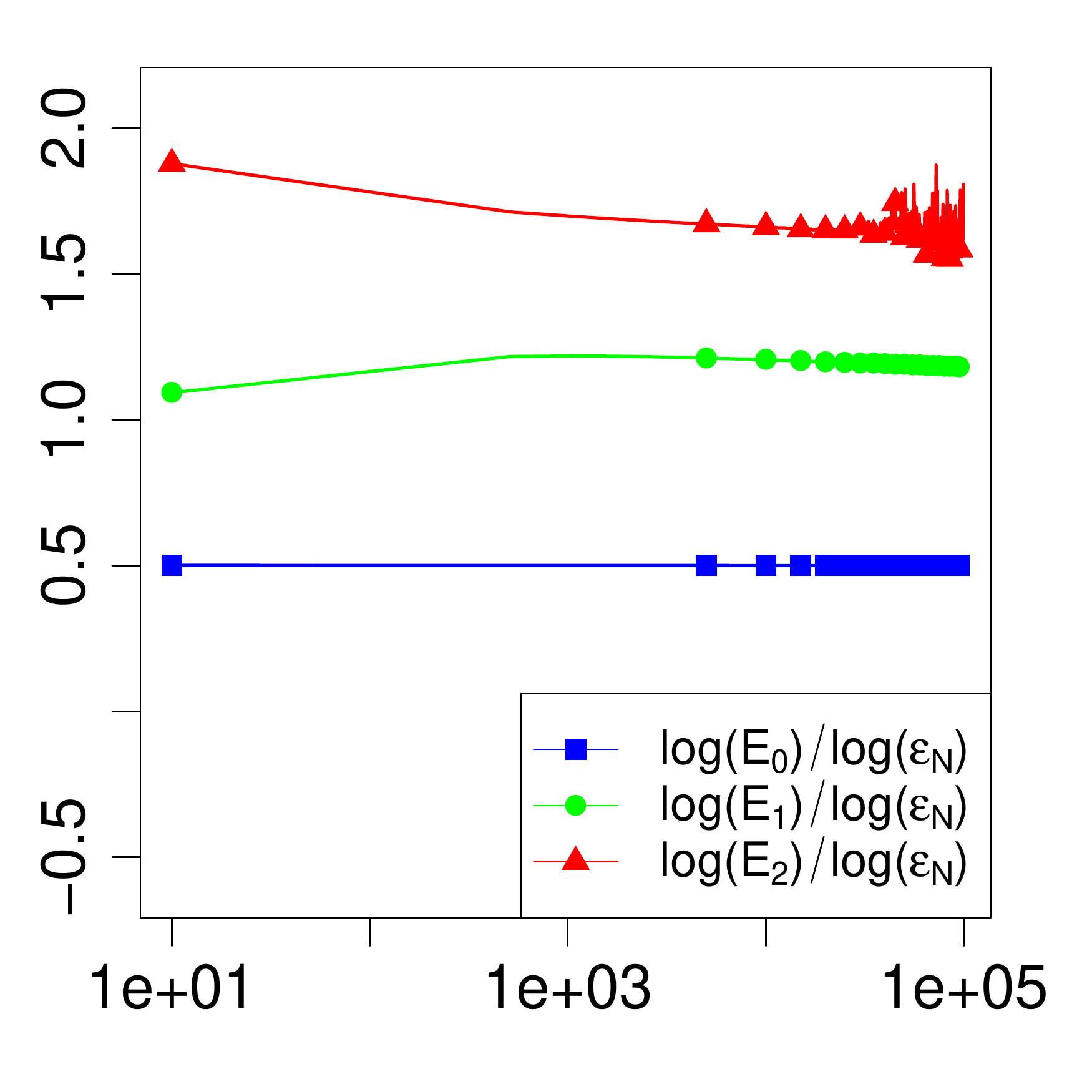}
            \vspace{-0.9cm}
            \caption{$\bb{\alpha} = (2,1)$ and $\beta = 3$}
        \end{subfigure}
        \quad
        \begin{subfigure}[b]{0.22\textwidth}
            \centering
            \includegraphics[width=\textwidth, height=0.85\textwidth]{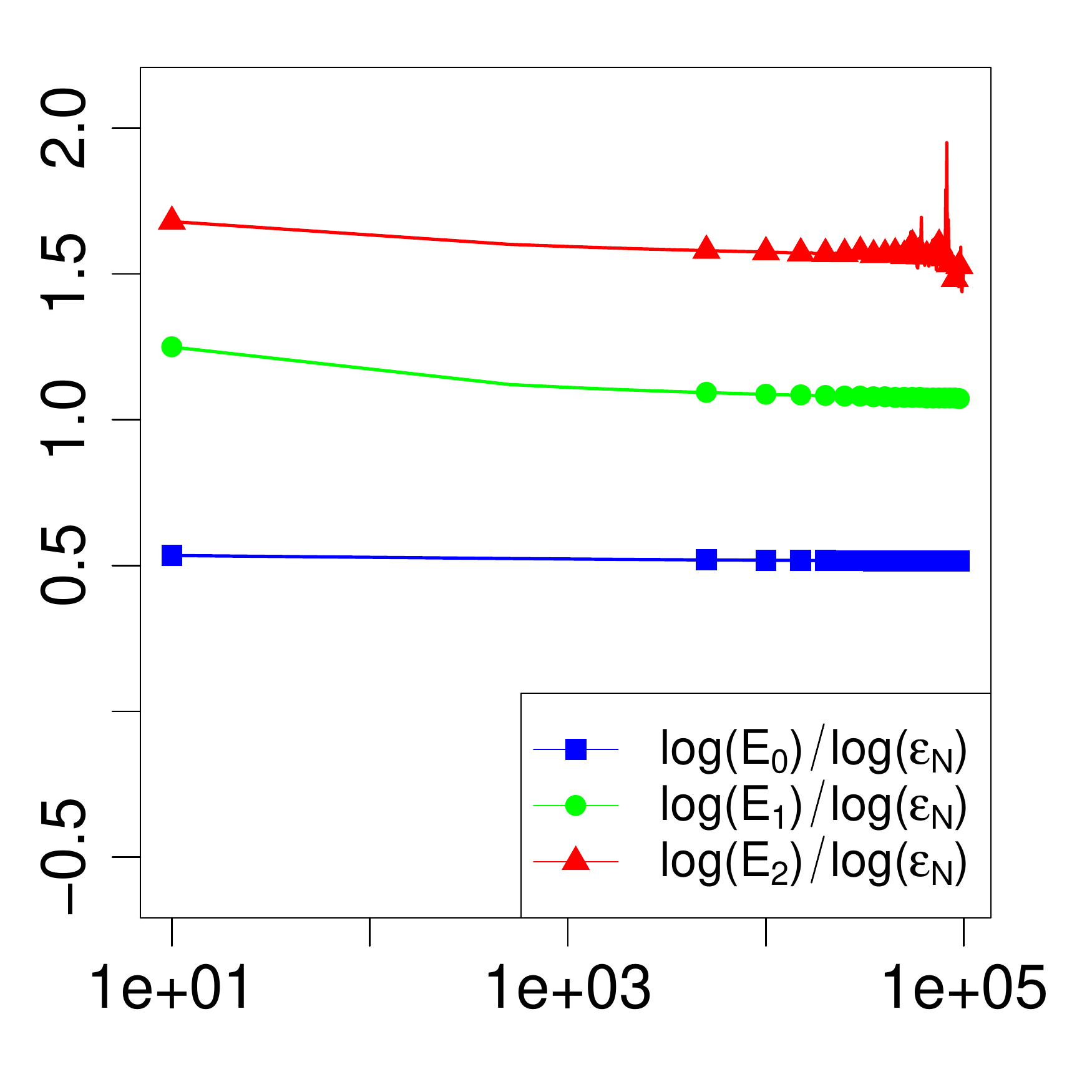}
            \vspace{-0.9cm}
            \caption{$\bb{\alpha} = (2,2)$ and $\beta = 3$}
        \end{subfigure}
        \quad
        \begin{subfigure}[b]{0.22\textwidth}
            \centering
            \includegraphics[width=\textwidth, height=0.85\textwidth]{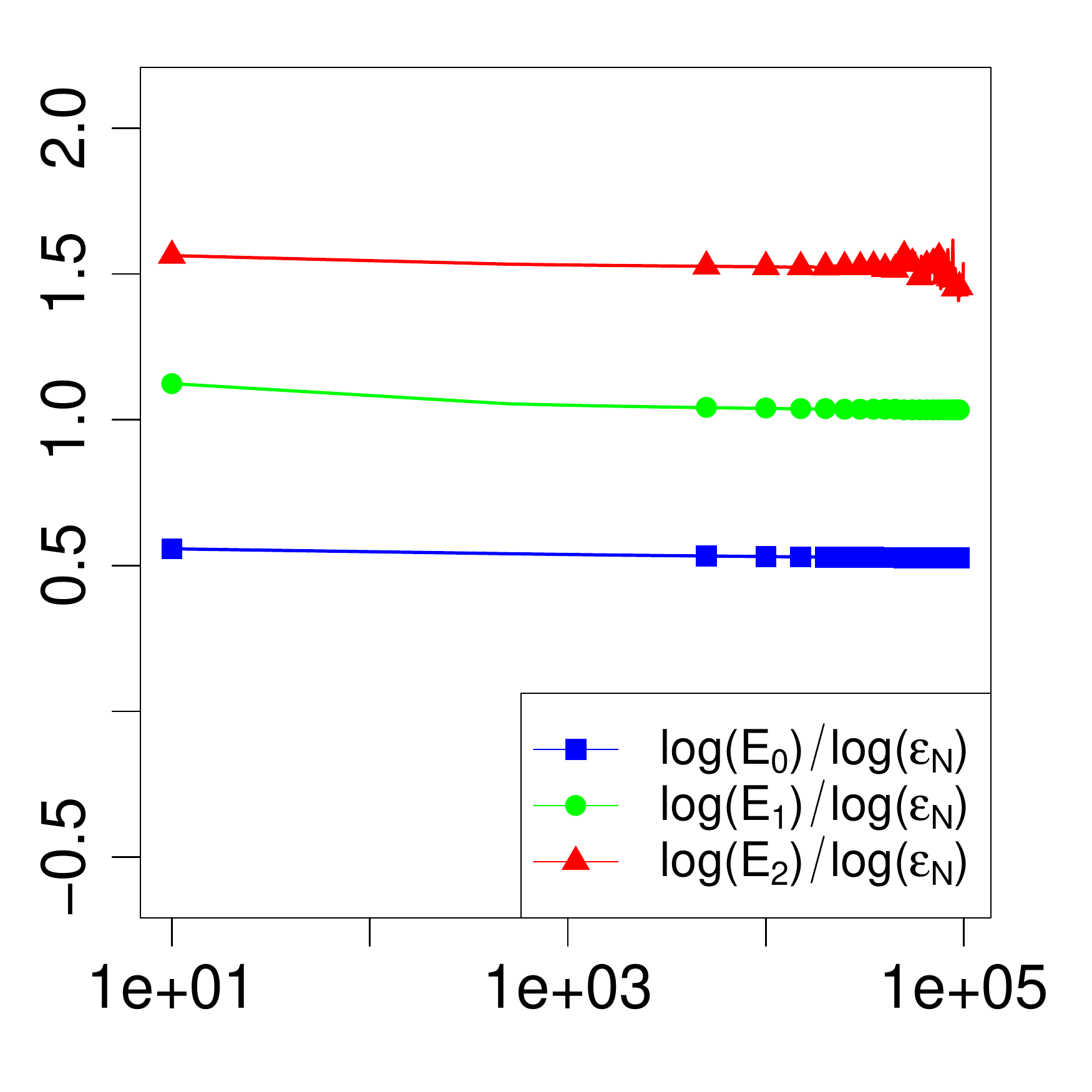}
            \vspace{-0.9cm}
            \caption{$\bb{\alpha} = (2,3)$ and $\beta = 3$}
        \end{subfigure}
        \quad
        \begin{subfigure}[b]{0.22\textwidth}
            \centering
            \includegraphics[width=\textwidth, height=0.85\textwidth]{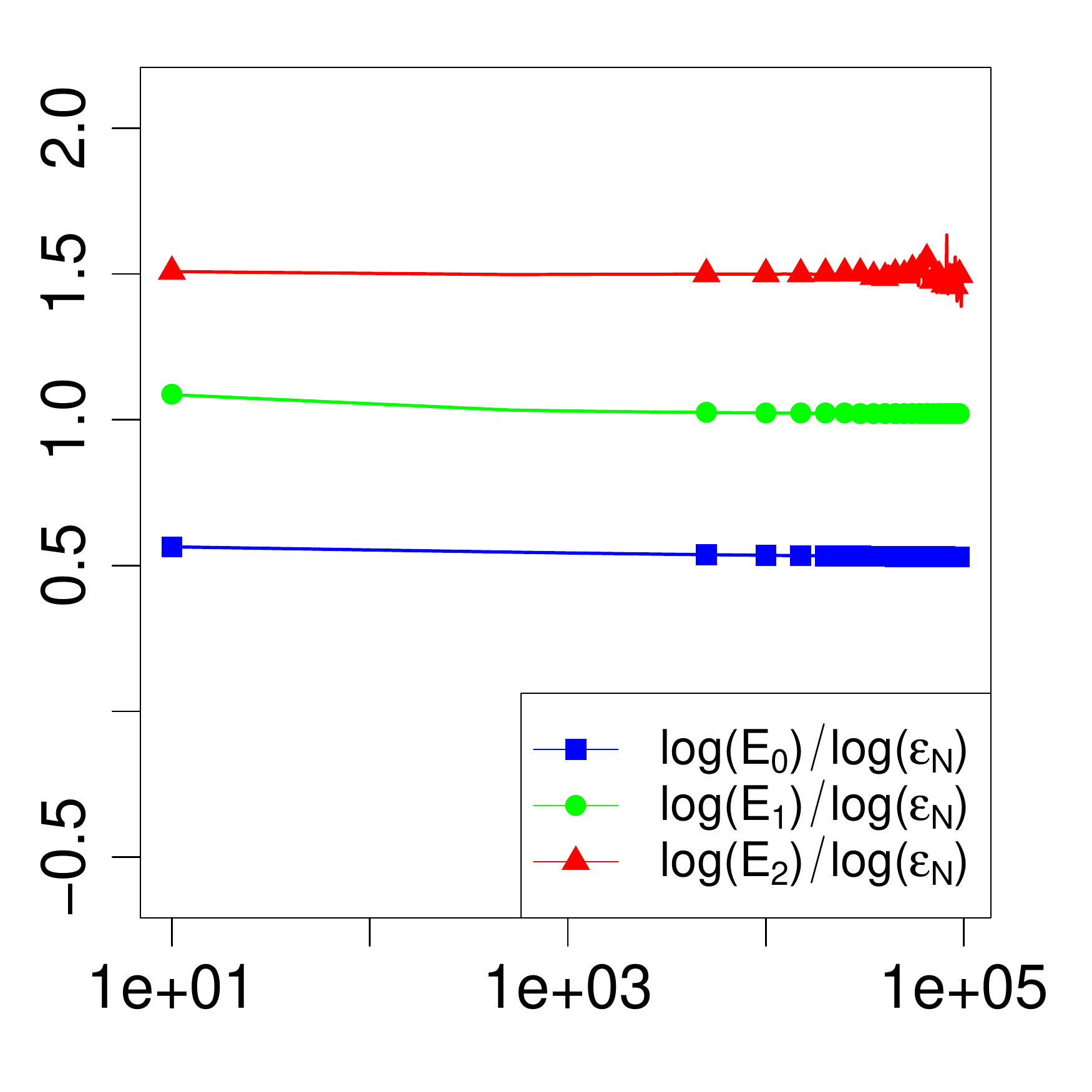}
            \vspace{-0.9cm}
            \caption{$\bb{\alpha} = (2,4)$ and $\beta = 3$}
        \end{subfigure}
        \begin{subfigure}[b]{0.22\textwidth}
            \centering
            \includegraphics[width=\textwidth, height=0.85\textwidth]{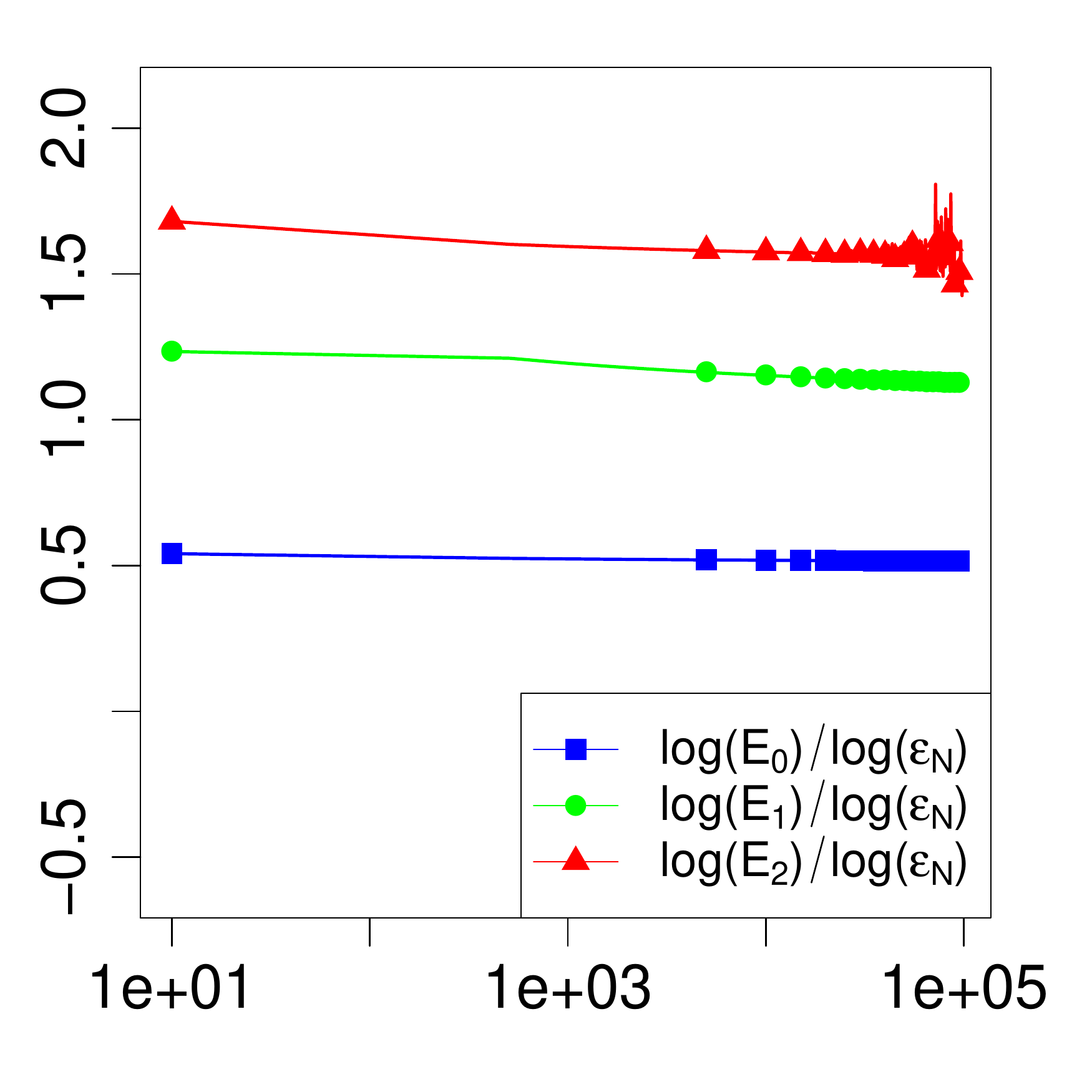}
            \vspace{-0.9cm}
            \caption{$\bb{\alpha} = (3,1)$ and $\beta = 3$}
        \end{subfigure}
        \quad
        \begin{subfigure}[b]{0.22\textwidth}
            \centering
            \includegraphics[width=\textwidth, height=0.85\textwidth]{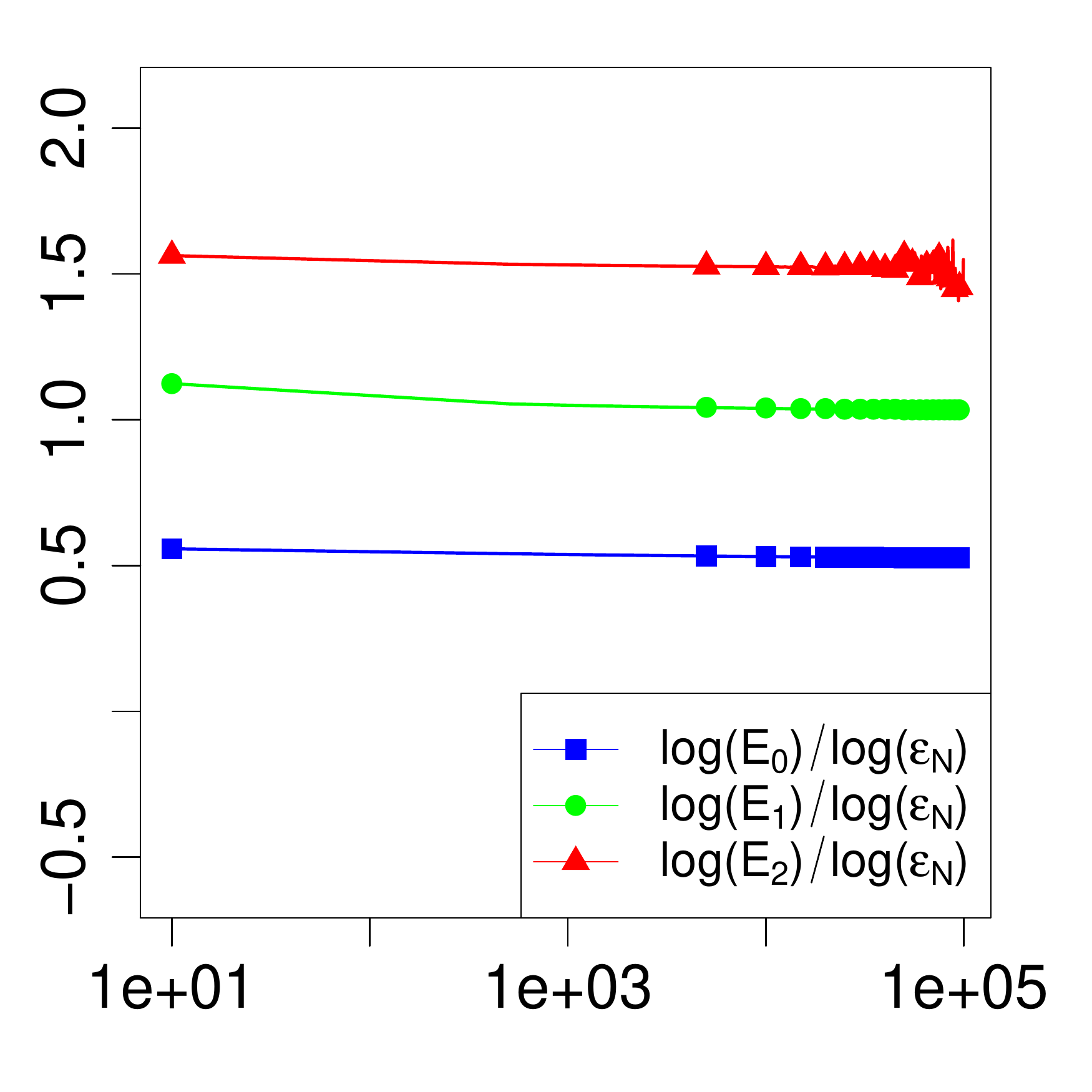}
            \vspace{-0.9cm}
            \caption{$\bb{\alpha} = (3,2)$ and $\beta = 3$}
        \end{subfigure}
        \quad
        \begin{subfigure}[b]{0.22\textwidth}
            \centering
            \includegraphics[width=\textwidth, height=0.85\textwidth]{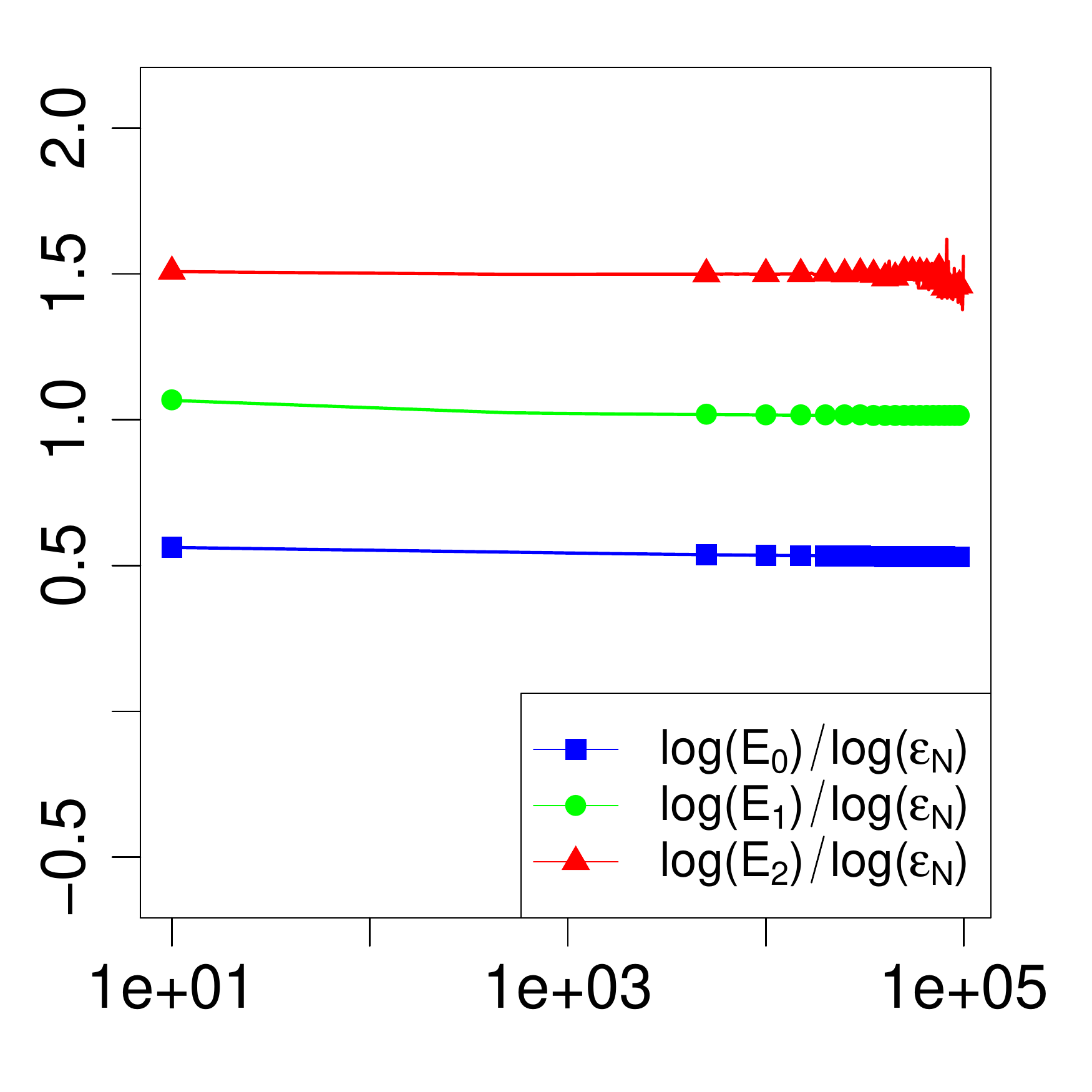}
            \vspace{-0.9cm}
            \caption{$\bb{\alpha} = (3,3)$ and $\beta = 3$}
        \end{subfigure}
        \quad
        \begin{subfigure}[b]{0.22\textwidth}
            \centering
            \includegraphics[width=\textwidth, height=0.85\textwidth]{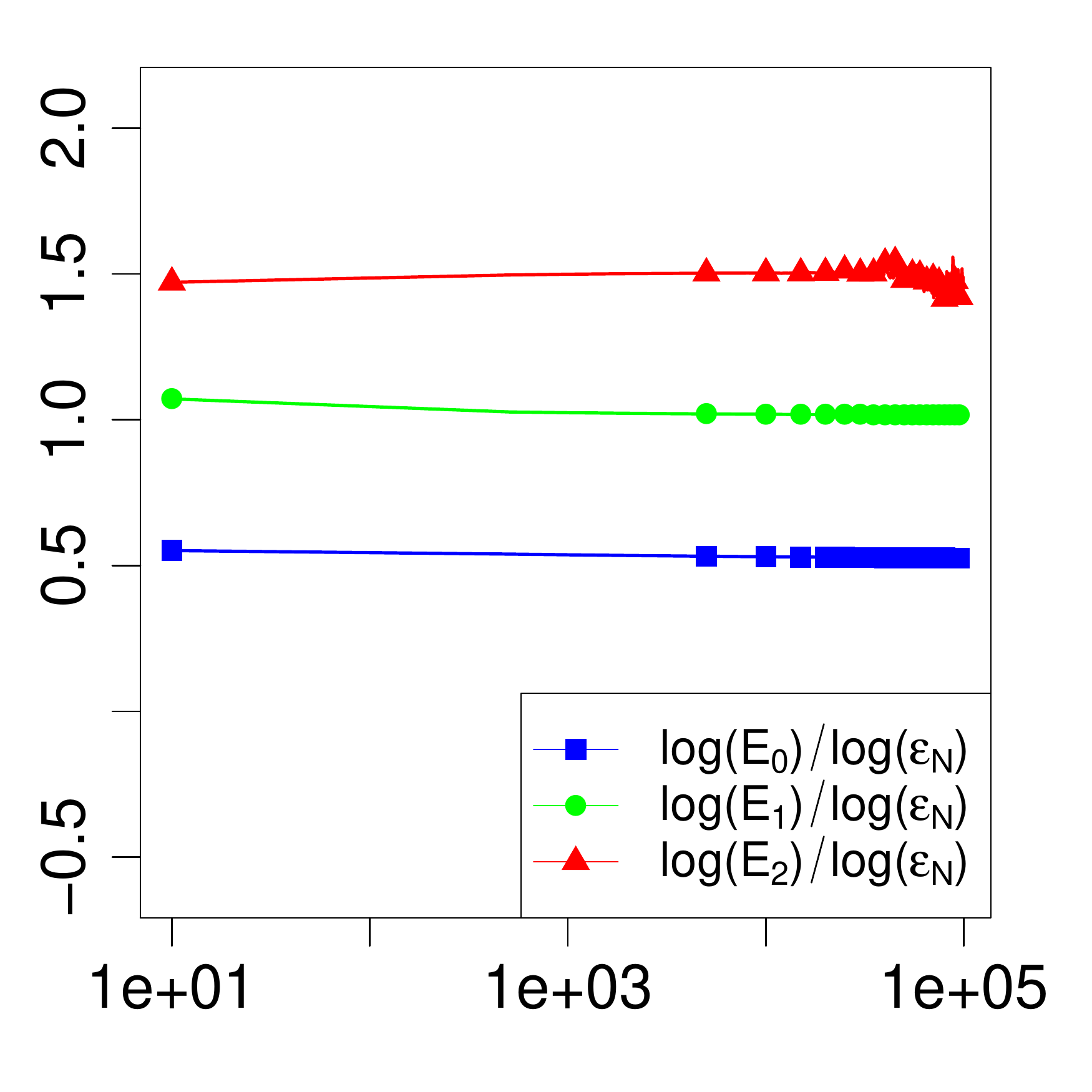}
            \vspace{-0.9cm}
            \caption{$\bb{\alpha} = (3,4)$ and $\beta = 3$}
        \end{subfigure}
        \caption{Plots of $\log E_i / \log \e_N$ as a function of $N$, for various choices of $\bb{\alpha}$, when $\beta = 3$. The horizontal axis is on a logarithmic scale. The plots confirm \eqref{eq:liminf.exponent.bound} and bring strong evidence for the validity of Theorem~\ref{thm:p.k.expansion}.}
        \label{fig:error.exponents.plots.beta.3}
    \end{figure}

\clearpage
\section*{Acknowledgments}

The author acknowledges support of a postdoctoral fellowship from the NSERC (PDF) and the FRQNT (B3X supplement).
We thank the referees for their valuable comments that led to improvements in the presentation of this paper.

\vspace{-5mm}
\section*{Data Availability Statement}

The \texttt{R} code that generated all the figures in Appendix~\ref{sec:simulations} is available as supplemental material online at \url{https://doi.org/10.1002/sta4.410}.

%
%

\vspace{-5mm}
\bibliography{Ouimet_2021_LLT_Dirichlet_bib}

\end{document}